\newtheorem{theorem}{Theorem}[section]
\newtheorem{prop}{Proposition}[section]
\newtheorem{lemma}{Lemma}[section]
\theoremstyle{definition}\newtheorem{definition}{Definition}[section]
\theoremstyle{remark}\newtheorem{remark}{Remark}[section]
\theoremstyle{remark}\newtheorem{example}{Example}[section]
\theoremstyle{remark}\newtheorem{problem}{Problem}[section]
\begin{document}

\title{Geometric Structures of Collapsing Riemannian Manifolds II: $N^{*}$-bundles and Almost Ricci Flat Spaces}

\author{Aaron Naber\thanks{Department of Mathematics, Princeton University,
        Princeton, NJ 08544 ({\tt anaber@math.princeton.edu}).} and Gang Tian\thanks{Department of Mathematics, Princeton University, Princeton, NJ 08544 ({\tt tian@math.princeton.edu}).}}

\date{\today}
\maketitle
\begin{abstract}
In this paper we study collapsing sequences $M_{i}\stackrel{GH}{\rightarrow} X$ of Riemannian manifolds with curvature bounded or bounded away from a controlled subset. We introduce a structure over $X$ which in an appropriate sense is dual to the $N$-structure of Cheeger, Fukaya and Gromov.  As opposed to the $N$-structure, which live over the $M_{i}$ themselves, this structure lives over $X$ and allows for a convenient notion of global convergence as well as the appropriate background structure for doing analysis on $X$.  This structure is new even in the case of uniformly bounded curvature and as an application we give a generalization of Gromov's Almost Flat Theorem and prove new Ricci pinching theorems which extend those known in the noncollapsed setting.  There are also interesting topological consequences to the structure.
\end{abstract}

%%% ----------------------------------------------------------------------
\section{Introduction}

This paper is the second in a series meant to study geometric structures of collapsing manifolds with bounded curvature, except possibly on a controlled subset.  The purpose of this paper is two fold.  First in the context of collapsing $n$-dimensional manifolds $(M^{n}_{i},g_{i})\rightarrow X$ with uniformly bounded sectional curvature we show the existence of structure over $X$ that allows both for a convenient notion of smooth convergence of the $M_{i}$ to $X$ as well as a necessary foundation for doing analysis on $X$ which takes into account this collapsing process.  The motivation for this structure is the following oversimplified picture; we will generalize it in a moment.  Let us assume that the $(M_{i},g_{i})$ have uniformly bounded sectional curvature and for the moment that the limit space $X$ is a manifold such that the collapse is nil (technical assumptions that will not be needed in the end).  Then the claim is that there is a vector bundle $V\rightarrow X$ which in an appropriate sense is the limit of the tangent bundles $TM_{i}$.  More precisely if $f_{i}:M_{i}\rightarrow X$ are continuous Gromov Hausdorff maps (the existence of which are well known in this context) then after passing to a subsequence we have that the pullback bundles $f^{*}_{i}V$ over $M_{i}$ are vector bundle isomorphic to the tangent bundles $\phi_{i}:TM_{i}\rightarrow f^{*}_{i}V$.  First notice that this statement alone has some content, in that even though no two of the $M_{i}$ need be diffeomorphic or even homotopic and yet their tangent bundles are all pullbacks of the same fixed bundle.  More than this there is a fiber metric $h$ on $V$ such that if $\phi^{*}_{i}h$ is the induced Riemannian metric on $M_{i}$ then $||\phi^{*}_{i}h-g_{i}||_{C^{1,\alpha}}\rightarrow 0$, and hence we see that there is a global notion of smooth convergence behind the scenes.  It is important to note that since $rank(V)=n=dim(M_{i})$ that the bundle $V\rightarrow X$, which we view as the limit of the tangent bundles $TM_{i}$ of $M_{i}$, is not the tangent bundle $TX$ of $X$.  In fact there turns out to be a canonical decomposition $V\approx TX\oplus V^{ad,X}$, where $V^{ad,X}$ represents the part of the tangent bundles $TM_{i}$ which point in the collapsing directions.  Now the main assumption here is that we assumed $X$ was a manifold.  We can get around this in the spirit of \cite{F1} by studying the limit of the frame bundles $(FM_{i},g^{FM}_{i})\rightarrow (Y, g^{Y})$, where the limit is now always a manifold.  In this case what we said all goes through and what we end up with is an equivariant vector bundle $V^{T}\rightarrow Y\stackrel{O(n)}{\rightarrow} X$.  We will see from the construction of $V^{T}$ that sections of $V^{T}$ are in some sense dual to the elements of the $N$-structure sheaf, so we will refer to $V^{T}$ as the $N^{*}$-bundle.  The bundle $V^{T}\rightarrow X$ is extremely important for the analysis of $X$ as it captures much more information than is in the geometry of $X$ itself. As first applications we generalize Gromov's Almost Flat theorem to the Ricci situation and prove a new Ricci pinching theorem in Theorem \ref{cor_nt2_main1}.

The second purpose of the paper is motivated by the desire to understand the metric structure of limits of four manifolds with bounded Ricci and Euler number.  Recall that one of the main consequences in the first paper \cite{NaTi} in the context of bounded Ricci curvature was to see that if $(M^{4}_{i},g_{i}) \stackrel{GH}{\rightarrow} X$, where the $M_{i}$ have uniformly bounded Ricci and Euler characteristic, then away from a finite number of points $\{p_{j}\}^{N}_{1}\in X$ we have that $X_{NS}\equiv X-\{p_{j}\}$ is a Riemannian orbifold.  The question is what is the structure of $X$ near these points.  We will see that the bundle $V^{ad,X}$ mentioned above is canonically flat and in the next paper that the holonomy of this flat connection is directly related to a removable singularity question at these points.

To describe more precisely the construction of the $N^{*}$-bundle we give a brief and simplified overview of the origins of the $N$-structure.  Again begin with the case that $(M_{i},g_{i})$ have uniformly bounded curvature and let $(M^{n}_{i},g_{i})\stackrel{GH}{\rightarrow} (X,d)$.  Even assume for now that $X$ is a manifold.  Then by the work of Fukaya \cite{F} for large $i$ the $M_{i}$ take the form of fiber bundles $M_{i}\stackrel{f_{i}}{\rightarrow}X$ whose fibers are infranil manifolds $N_{i}/\Lambda_{i}$, where the $N_{i}$ are simply connected nilpotent Lie Groups and $\Lambda_{i}< N_{i}\rtimes Aut(N_{i})$ are discrete lattices.  Thus if $U\subseteq X$ is a small ball then $f^{-1}_{i}(U)\approx U\times (N_{i}/\Lambda_{i})\equiv U_{i}$ and if we look at the universal cover $\widetilde{f^{-1}_{i}(U)}\approx U\times N_{i}\equiv \widetilde{U_{i}}$ then there is an action by $N_{i}$ on this universal cover.  Because the fibers have small diameter the subgroup $\Lambda_{i}$ is acting in an increasingly dense fashion on each fiber and using the sectional curvature bounds one can see that $N_{i}$ itself acts almost isometrically.  If we view $N_{i}$ as a right action then the derivative of this action gives left invariant vertical vector fields which are almost Killing fields.  At least locally these pass back down to vector fields on $M_{i}$, and if one is careful these local vector fields obtained from each such $U\subseteq X$ can be used throughout $M_{i}$ to give the global sheaf.  More generally because the frame bundles $FM_{i}\rightarrow Y$ always collapse to a manifold \cite{F1} this can at least always be done on the $FM_{i}$ in an equivariant manner.  Hence even when $X$ is not a manifold such a sheaf may be built on the $M_{i}$, though now the dimension of the orbits of various points may change.

The $N^{*}$-bundle is constructed by instead of considering the left invariant vertical vector fields with respect to this local $N_{i}$ action on $\widetilde{U_{i}}$ we consider the local right invariant vector fields.  If we restrict to right invariant vertical vectors these would locally correspond to sections of the local adjoint bundle $U\times\eta_{i}\rightarrow U$ where $\eta_{i}$ is the lie algebra of $N_{i}$.  More generally it is convenient to consider all right invariant vectors, which locally corresponds to sections of $TU\times\eta_{i}$.  Again if one is careful then it turns out that these local vector bundles may be pasted together into global vector bundles (at least as long as the fibers are nil), the later of which we call the invariant tangent bundle $V_{i}^{T}\rightarrow X$ and the former we call the adjoint bundle $V_{i}^{ad}\rightarrow X$.  We remark that since the $N_{i}$ action is local one cannot view $V_{i}^{ad}$ as an actual adjoint bundle, however in a generalized sense the name is appropriate because of the local construction of the bundle.  That $V_{i}^{T}\approx V^{T}$ and $V_{i}^{ad}\approx V^{ad}$ are actually independent of $i$ as vector bundles is an issue dealt with in Section \ref{sec_V_str} (though the lie algebra structures of the bundles can change in the limit!) .  To put a geometry on $V^{T}$ we note that if the $g_{i}$ were invariant under the local $N_{i}$ action on each $\widetilde{U_{i}}\approx U\times N_{i}$ that in this case we see that $g_{i}$ induce fiber metrics $g_{i}^{T}$ on the invariant tangent bundle $V^{T}\rightarrow X$.  It is clear from the construction that such a fiber metric $g_{i}^{T}$ also uniquely determines an $N_{i}$-invariant metric $g_{i}$ on $X$ (see subsequent paragraphs for more details on this).  In terms of the adjoint bundle $V^{ad}\rightarrow X$ we see that an $N$-invariant metric $g_{i}$ determines an affine connection $\nabla^{ad}$ and fiber metric $h^{ad}$ on $V^{ad}$.  Again in the case where $X$ is not a manifold these constructions follow through on the frame bundles in an equivariant fashion, and one can use this to construction to obtain an equivariant vector bundle and geometric data over the frame limit $Y\rightarrow X$.

A precise definition of the $N^{*}$-bundle over a space $X$ is as follows, where it is assumed $G$ is a compact Lie Group (it will be an orthogonal group in practice).

\begin{definition}\label{def_vstr}
Let $X$ be a topological space.  An $N^{*}$-bundle over $X$ is a smooth $G$-manifold $Y$ with finite principal isotropy called the frame space together with a $G$-vector bundle $V^{T}\rightarrow Y$, a surjective $G$-mapping $\rho: V^{T}\rightarrow TY$, a $G$-invariant fiber metric $g^{T}$ and a nilpotent lie algebra $\eta$ such that:
\begin{enumerate}

\item There exists a covering $\{U_{\alpha}\}$ of $Y$ such that for each $U\in\{U_{\alpha}\}$ the restriction $V|_{U}\rightarrow U$ has local trivialization as the bundle $TU\times\eta$ so that $\rho:TU\times\eta\rightarrow TU$ is locally just the projection to the first coordinate.

\item There exists a $G$-invariant flat connection $\nabla^{flat}$ on the $G$-bundle $V^{ad}\equiv ker\rho$ such that in the above trivializations the connection is the trivial one.  Further for two coverings $U_{0}\cap U_{1}\neq \emptyset$ as above the induced coordinate transformation on $\eta$ is an affine transformation.

\item $Y/G$ is homeomorphic to $X$ and the $G$-action on $V^{T}$ induces lie algebra homomorphisms on $ker\rho$.
\end{enumerate}
\end{definition}

The fiber metric $g^{T}$ allows for a horizontal inverse map $\rho^{-1}:TY\rightarrow V^{T}$ which we may use to pull back $g^{T}$ to get a $G$-invariant Riemannian metric $g^{Y}$ on $Y$ and hence an induced length space geometry $d^{X}$ on $X$.  Conditions $\ref{def_vstr}.1$ and $\ref{def_vstr}.2$ guarantee specific reductions on the structure group of $V$, while condition $\ref{def_vstr}.3$ states that the $G$ action is compatible with these reductions.  See Section \ref{sec_flat_conn} for even further reductions of the structure group in practice.

For each $U\in\{U_{\alpha}\}$ if $N$ is the simply connected nilpotent lie group associated to $\eta$ let us see how to use the $N^{*}$-bundle to construct a Riemannian manifold $(U\times N,g^{U\times N})$.  The metric $g^{U\times N}$ should be invariant under the right $N$ action with the property that $g^{U\times N}/N =  g^{Y}$.  In practice if $M_{i}\stackrel{f_{i}}{\rightarrow} Y$ then for nicely chosen Gromov Hausdorff maps $f_{i}$ (see Section \ref{sec_estimates}) the space $(U\times N,g^{U\times N})$ should represent the Cheeger-Gromov limit of the universal covers $(\widetilde{f^{-1}_{i}(U),g_{i}})$.  These universal covers do in fact have injectivity radius bounds and so such a limit is possible.  Thus we will see how the $N^{*}$-bundle recaptures the unwrapped limits of $M_{i}$.

So let $U\in\{U_{\alpha}\}$ and $N$ as before.  Let $\xi^{a}$ be a vector basis of $TU$ and $\zeta^{\hat a}$ be a basis of $\eta$.  Then $\{\xi^{a},\zeta^{\hat a}\}$ naturally forms a vector field basis for $TU\times\eta$.  The canonical coordinate association $T(U\times N)\approx TU\times TN$ gives us a natural isomorphism $T(U\times N)/N\approx TU\times\eta$.  So we can also view $\{\xi^{a},\zeta^{\hat a}\}$ as an $N$- invariant vector field basis of $T(U\times N)$ and the fiber metric $g^{V}$ induces a right invariant metric $g^{U\times N}$ on $U\times N$ as desired.  It is also worth pointing out that $g^{V}$ alone does not describe the geometry (i.e. curvature) of $U\times N$.  For that we also need to know the brackets of the basis $\{\xi^{a},\zeta^{\hat a}\}$, which is precisely what the lie algebra $\eta$ gives us.  The $G$-equivariance allows a similar procedure to recapture the geometry on $X$.  See Section \ref{sec_geom_V_ad} for more on that.

Now the complete (nonlocal) picture is as follows, below we give the formal local theorem.  Let $(M^{n}_{i},g_{i}) \stackrel{GH}{\rightarrow} (X,d)$ where the $M_{i}$ are complete with uniformly bounded curvature for simplicity.  Then as in \cite{F1} and Lemma \ref{lem_frame} we see that we can put $O(n)$-metrics $g^{FM}_{i}$ on the frame bundles $FM_{i}$ such that $(FM_{i},g^{FM}_{i},O(n))\stackrel{eGH}{\rightarrow} (Y,g^{Y},O(n))$ where $(Y,g^{Y})$ is a Riemannian manifold with $(Y,g^{Y})/O(n)\approx (X,d)$.  However $(Y,g^{Y})$ loses considerable information about the collapsing sequence itself.  We claim there is a $N^{*}$-bundle $V^{T}\rightarrow Y\rightarrow X$ above $X$ such that, after passing to a subsequence, we can pick the equivariant Gromov Hausdorff maps $f_{i}:FM_{i}\rightarrow Y$ so that there exist equivariant vector bundle isomorphisms $\phi_{i}:TFM_{i}\rightarrow f^{*}_{i}V^{T}$ such that $||(\phi_{i}\circ f_{i} )^{*}g^{V}-g^{FM}_{i}||_{C^{1,\alpha}}\rightarrow 0$.  We will also see in Section \ref{sec_geom_V_ad} that the geometry of the $N^{*}$-bundle more completely describes the structure of the metric on the collapsed space.  An analysis of the unwrapped limits has also been done by Lott using a groupoid approach in \cite{Lo}, where it is applied to understand limits of Ricci flows.

To state the theorem carefully we need a couple technical definitions.  Primarily this is because we need to have these constructions be purely local, which add a lot of notational mess without greatly altering the total picture.  We refer to Section \ref{sec_notation} for the definitions involving metric regularity but essentially we just need to distinguish between the more or less equivalent bounds for a Riemannian manifold $(M,g)$ in form of curvature bounds (which we refer to as regular bounds in the spirit of \cite{CFG}) and the existence of good weak coordinates (which we refer to as bounded geometry, see Section \ref{sec_notation}).  We call a $N^{*}$-bundle $(\{A\}^{k+1,\alpha}_{0},r)$-bounded if the induced metric $g^{Y}$ is $(\{A\}^{k+1,\alpha}_{0},r)$-bounded and one can pick coordinates as in definition \ref{def_vstr} such that the fiber metric $g^{T}$ is $(\{A\}^{k+1,\alpha}_{0},r)$-bounded (see Section \ref{sec_notation}).

As mentioned the construction of the $N^{*}$-bundle takes place on the frame bundles $FM_{i}$ of a collapsing sequence.  Recall as in \cite{F1} that if $(M,g)$ is a Riemannian manifold then there exists a metric $g^{FM}$ on $FM$ such that $(FM,g_{FM})/O(n)\approx (M,g)$.  Even better we can build this metric as in Lemma \ref{lem_frame} to have the same bounded regularity properties as $(M,g)$, and the metric from this lemma is the one we will always use when we refer to a geometry on the frame bundle.

Note as in Section \ref{sec_notation} that if $M$ has boundary then we define $M_{\iota} \equiv \{x\in M: d(x,\partial M)>\iota\}$.  If $M$ has no boundary then $M_{\iota} \equiv M$.  If $M_{i}\stackrel{GH}{\rightarrow}X$ then we define $X_{\iota}$ as the Gromov Hausdorff limit of $M_{i,\iota}$. We state the following for compact limits $X$.  However since the $M_{i}$ are allowed to have boundary the same statements hold for arbitrary $X$, we simply need to replace \textit{convergence} with \textit{converence on compact subsets}.

\begin{theorem}\label{thm_nt2_main1}
Let $(M_{i}^{n},g_{i})\stackrel{GH}{\rightarrow} (X,d)$ where the $M_{i}$ are $\{A\}^{k}_{0}$-regular spaces and $(X,d)$ is a compact metric space.  Then for each $\iota>0$ and $0<\alpha<1$ we have after passing to a subsequence:
\begin{enumerate}
\item There exists an $(\{B\}^{k+1,\alpha}_{0},r)=(\{B(n,A)\}^{k+1,\alpha}_{0},r(n,A^{0}))$-bounded $N^{*}$-bundle $V^{T}\rightarrow Y\stackrel{O(n)}{\rightarrow}X_{\iota}$.

\item For each $i$ there exists $O(n)$-equivariant Gromov Hausdorff maps $f_{i}:FM_{i,\iota}\rightarrow Y$ which induce the convergence $(FM_{i},g^{FM}_{i},O(n))\stackrel{eGH}{\rightarrow}(Y,g^{Y},O(n))$.

\item There exists $O(n)$-equivariant vector bundle isomorphisms $\phi_{i}:TFM_{i}\rightarrow f_{i}^{*}V^{T}$ such that $||(\phi_{i}\circ f_{i})^{*} g^{V}-g^{FM}_{i}||_{C^{k+1,\alpha}}\rightarrow 0$.
\end{enumerate}
\end{theorem}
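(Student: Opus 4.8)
The plan is to reduce everything to the frame bundles, build the bundle $V^{T}$ from the local nilpotent structure of a Fukaya fibration, and then install the geometry and push the a priori estimates through. First I would use Lemma \ref{lem_frame} to equip each $FM_{i}$ with an $O(n)$-invariant metric $g^{FM}_{i}$ having the same $\{A\}^{k}_{0}$-regularity as $g_{i}$, and invoke Fukaya's lower injectivity radius bound on frame bundles to conclude that, after passing to a subsequence, $(FM_{i},g^{FM}_{i},O(n))$ converges in the equivariant Cheeger--Gromov sense to a smooth $O(n)$-manifold $(Y,g^{Y})$ with finite principal isotropy and $Y/O(n)\approx X_{\iota}$; the equivariant Gromov Hausdorff maps $f_{i}:FM_{i,\iota}\to Y$ realizing this convergence give item 2. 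Applying Fukaya's fibration theorem (in the form of \cite{F1}, \cite{CFG}), for $i$ large these $f_{i}$ can be taken to be $O(n)$-equivariant fiber bundle maps whose fibers are infranilmanifolds $N_{i}/\Lambda_{i}$.

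Next I would construct $V^{T}\to Y$ locally. Over each ball $U$ of a good covering $\{U_{\alpha}\}$ of $Y$ we have $f^{-1}_{i}(U)\approx U\times(N_{i}/\Lambda_{i})$ with universal cover $\widetilde{U_{i}}\approx U\times N_{i}$; the right $N_{i}$-action on the second factor produces right invariant vector fields, and passing to a further subsequence we may take the Lie algebras $\eta_{i}$ to converge to a fixed nilpotent Lie algebra $\eta$, with the underlying vector bundle structure independent of $i$ by the results of Section \ref{sec_V_str}. The local models $TU_{\alpha}\times\eta$ then glue, for each fixed $i$ and uniformly in $i$, into a global $O(n)$-vector bundle $V^{T}\to Y$ with surjection $\rho:V^{T}\to TY$ (locally the projection), kernel $V^{ad}$ carrying the flat connection with affine transition maps inherited from the affine structure of the $N_{i}$'s, and an $O(n)$-action inducing Lie algebra homomorphisms on $\ker\rho$ — that is, precisely the data of Definition \ref{def_vstr}. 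To equip $V^{T}$ with geometry, I would use the uniform curvature bounds to see that the local $N_{i}$-actions on $\widetilde{U_{i}}$ become almost isometric, so that after the appropriate averaging the lifted metrics induce fiber metrics $g^{T}_{i}$ on $V^{T}$; a further subsequence gives $g^{T}_{i}\to g^{T}$. Boundedness of the resulting $N^{*}$-bundle, i.e. that the data is $(\{B(n,A)\}^{k+1,\alpha}_{0},r(n,A^{0}))$-bounded, follows from the a priori regularity estimates of Section \ref{sec_estimates} applied in harmonic-type gauges together with Cheeger--Gromov compactness, completing item 1.

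Finally, for item 3 I would observe that over each $f^{-1}_{i}(U_{\alpha})$ the $N_{i}$-invariant frame $\{\xi^{a},\zeta^{\hat a}\}$ of $T(U_{\alpha}\times N_{i})/N_{i}$ gives a bundle isomorphism $TFM_{i}|_{f^{-1}_{i}(U_{\alpha})}\to f^{*}_{i}V^{T}|_{f^{-1}_{i}(U_{\alpha})}$; since the overlap transformations are exactly the affine transition maps of the $N$-structure on $FM_{i}$, which are also the transition maps of $V^{T}$, these local isomorphisms patch to a global $O(n)$-equivariant $\phi_{i}$. By construction $(\phi_{i}\circ f_{i})^{*}g^{V}$ is the metric on $FM_{i}$ induced by $g^{T}_{i}$ under this trivialization, and then $\|(\phi_{i}\circ f_{i})^{*}g^{V}-g^{FM}_{i}\|_{C^{k+1,\alpha}}\to 0$ follows from $g^{T}_{i}\to g^{T}$ together with the $C^{k+1,\alpha}$-regularity estimates.

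I expect the main obstacle to be the \emph{simultaneous} gluing: arranging that the local trivializations $TU_{\alpha}\times\eta$ are consistent across overlaps for each fixed $i$ \emph{and} uniformly in $i$, so that the glued bundle, its flat connection, and the $O(n)$-action all survive the limit while remaining literally the same bundle $V^{T}$ for every $i$. This is where the equivariant center-of-mass and averaging constructions of \cite{CFG} must be adapted carefully, in combination with the stabilization of the nilpotent structures from Section \ref{sec_V_str}. A secondary difficulty is upgrading the convergence from $C^{1,\alpha}$ to the full $C^{k+1,\alpha}$, which forces one to work throughout in gauges where the elliptic estimates bootstrap cleanly.
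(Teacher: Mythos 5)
Your outline follows the paper's strategy faithfully: pass to frame bundles via Lemma \ref{lem_frame}, obtain the $O(n)$-equivariant convergence $(FM_{i},g^{FM}_{i},O(n))\to(Y,g^{Y},O(n))$ with smooth equivariant GH maps $f_{i}$ via the improved estimates of Section \ref{sec_estimates} and the nil-atlas machinery of the appendix, build the local models $TU_{\alpha}\times\eta_{i}$ from the right-invariant fields of the local $N_{i}$-structure, glue to obtain $V^{T}_{i},V^{ad}_{i}\to Y$, and then stabilize the whole package in the limit. Items 2 and 3 are essentially identical to the paper's proof, and your recognition that the main technical pressure point is the $i$-independence of $V^{T}$ is correct.

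The one place your description of the mechanism is slightly off is in how that $i$-independence is actually achieved. You attribute it to "equivariant center-of-mass and averaging constructions" adapted across the sequence, but in the paper the center-of-mass/averaging only enters \emph{per fixed $i$} — it is used in Sections \ref{sec_estimates} and \ref{sec_framespace} to smooth the GH maps, produce the nil bundle structure $(f_{i},\nabla^{f_{i}})$, and make $\bar g^{FM}_{i}$ invariant under the local $N_{i}$-actions. Once Theorem \ref{thm_vector_bundles} gives, for each $i$ separately, a bundle $V^{T}_{i}\to Y$ with $\{C\}^{k+1,\alpha}_{0}$-bounded transition maps $\varphi^{T}_{\beta\gamma,i}$ and fiber metrics $g^{T}_{i}|_{U_{\beta}}$, the stabilization is just Arzel\`a--Ascoli: these transition maps and metrics converge in $C^{k+1,\alpha'}$ along a subsequence, the limits automatically satisfy the cocycle condition (continuity suffices), and hence define a fixed $V^{T}$ with fiber metric $g^{T}$. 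For large $i$ the transition data of $V^{T}_{i}$ are close to those of $V^{T}$, which yields the equivariant isomorphisms $\phi_{i}$ and the $C^{k+1,\alpha}$ convergence in item 3. So the final step is a soft compactness argument on transition functions, not a further averaging; the hard uniform estimates are all front-loaded into the construction of each $V^{T}_{i}$. With that correction your proposal matches the paper's proof.
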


\begin{remark}
It is worth mentioning that what is required above is that the geometry of the $M_{i}$ be $(\{A\}^{k+1,\alpha}_{0},r)$-bounded for $k\geq -1$.  The same statement holds under this assumption.
\end{remark}

We would like to point out that bundles similar to $V^{T}$ and $V^{ad}$ have appeared previously in the literature, namely in \cite{Lo1} and \cite{Lo3}.  They were used to analyze limits of Dirac operators on spaces with bounded curvature.  A finiteness theorem similar to the results of Section \ref{sec_V_str} is proved there as well, though the estimates of Section \ref{sec_estimates} are needed for Theorem \ref{thm_nt2_main1}.  The structure of Section \ref{sec_flat_conn} is also not provided, and this is key to the proof of the next theorem.

Now we wish to give some brief applications of a more thorough analysis of this bundle.  As motivation for the next result consider the following simplified version, which can be found in \cite{F3}.  Assume $(M^{n}_{i},g_{i})\stackrel{GH}{\rightarrow}(X,d)$ with $diam(M_{i})=1$ and $|sec_{i}|\rightarrow 0$.  Then it follows, and the proof can be considered a generalization of the Bieberbach theorem, that $(X,d)$ is a flat Riemannian orbifold.  The moral here is that while upper sectional bounds do not generally pass to the limit, in the case of pinching they must.  We prove a similar statement for the Ricci case below, though the proof is more involved.

\begin{theorem}\label{thm_nt2_main2}
Let $(M^{n}_{i},g_{i})\stackrel{GH}{\rightarrow}(X,d)$ where the $M_{i}$ are complete with $diam(M_{i})=1$, $|sec_{i}|\leq K$ and $|Rc_{i}|\rightarrow 0$.  Then $(X,d)$ is a Ricci flat Riemannian orbifold with $|sec|\leq K$.
\end{theorem}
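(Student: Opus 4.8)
The plan is to feed the sequence into Theorem \ref{thm_nt2_main1} and then exploit the two curvature hypotheses on the resulting $N^{*}$-bundle, the decisive input being Milnor's theorem that a non-abelian nilpotent Lie group admits no left-invariant metric with $Rc\le 0$. Since $|sec_{i}|\le K$ makes the $M_{i}$ into $\{A\}^{0}_{0}$-regular spaces, Theorem \ref{thm_nt2_main1} applies: after passing to a subsequence we obtain an $(\{B\}^{1,\alpha}_{0},r)$-bounded $N^{*}$-bundle $V^{T}\to Y\stackrel{O(n)}{\rightarrow}X$, equivariant Gromov Hausdorff maps $f_{i}:FM_{i}\to Y$, and equivariant bundle isomorphisms $\phi_{i}$ with $\|(\phi_{i}\circ f_{i})^{*}g^{V}-g^{FM}_{i}\|_{C^{1,\alpha}}\to 0$. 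Concretely this structure records, over each chart $U\in\{U_{\alpha}\}$ with associated simply connected nilpotent group $N$, the pointed Cheeger-Gromov limit $(U\times N,g^{U\times N})$ of the universal covers $(\widetilde{f_{i}^{-1}(U)},g_{i})$; these covers have uniformly bounded curvature and a definite lower injectivity radius bound, so such a limit exists, and unwinding the frame bundle the relevant model of the $M_{i}$ themselves is a fixed complete simply connected Riemannian $n$-manifold $\widehat{N}$ carrying a free isometric right $N$-action, with $\widehat{N}/N$ an open subset of $X$.

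Next I would promote $\widehat{N}$ from $C^{1,\alpha}$-regular to Ricci flat with $|sec|\le K$. The convergence of the metrics is $C^{1,\alpha}$ in harmonic coordinates, and there the Ricci operator depends on the metric in a way that is continuous under distributional convergence of the second derivatives, so $Rc_{g_{i}}\to 0$ forces $Rc\equiv 0$ on $\widehat{N}$ in the distributional sense; elliptic regularity for the Ricci-flat equation in harmonic coordinates then makes $\widehat{N}$ a smooth (indeed real-analytic) Ricci-flat manifold. Similarly $|sec_{i}|\le K$ is a pointwise linear inequality in the curvature tensor, which survives passage to the weak-$*$ limit of the uniformly bounded curvatures, giving $|sec|\le K$ on $\widehat{N}$. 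The same argument carried out $O(n)$-equivariantly controls $g^{Y}$ and shows the induced length metric on $X=Y/O(n)$ coincides with the Gromov Hausdorff limit $d$.

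The heart of the proof is the analysis of the Riemannian submersion $\widehat{N}\to\widehat{N}/N$, whose fibers are the $N$-orbits. Using the further reductions of the $N^{*}$-bundle structure from Section \ref{sec_flat_conn} — in particular that the invariant metric renders the orbit fibers totally geodesic, each isometric to $N$ with a fixed left-invariant metric — O'Neill's formula gives, for a unit vertical vector $V$, $0=Rc_{\widehat{N}}(V,V)=Rc_{N}(V,V)+|A^{*}V|^{2}\ge Rc_{N}(V,V)$. Hence the left-invariant metric on $N$ has $Rc\le 0$ everywhere, so by Milnor's theorem $N$ is abelian; then $Rc_{N}=0$ forces the O'Neill integrability tensor $A$ to vanish identically. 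Therefore $\widehat{N}\to\widehat{N}/N$ is a Riemannian submersion with flat, totally geodesic fibers and integrable horizontal distribution, so it is locally a Riemannian product $X_{0}\times\mathbb{R}^{k}$ with $k=\dim N$ and $X_{0}$ a chart of $X$. Being a de Rham factor of the Ricci-flat manifold $\widehat{N}$, $X_{0}$ is Ricci flat, and as a totally geodesic submanifold it inherits $|sec|\le K$.

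It remains to identify the global structure. The finite part of the limiting local holonomy — coming from the $\pi_{0}$ of the infra-nil deck groups together with the finite principal isotropy of the $O(n)$-action built into Definition \ref{def_vstr} — acts on the charts $X_{0}$ by isometries with fixed-point sets of positive codimension, so these quotients are Riemannian orbifold charts, and patching over $\{U_{\alpha}\}$ shows $(X,d)$ is a Ricci-flat Riemannian orbifold with $|sec|\le K$. The step I expect to be the main obstacle is the third one: one must verify that the invariant metric supplied by the $N^{*}$-bundle really does make the orbit fibers totally geodesic (or control their second fundamental form tightly enough to still run the O'Neill--Milnor dichotomy), and one must transfer the vanishing of the Ricci curvature through the frame-bundle construction so that it is the metric on $X$, not merely the auxiliary metric on $Y$, that comes out Ricci flat.
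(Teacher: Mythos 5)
Your plan has a genuine gap at exactly the point you flag at the end as ``the main obstacle,'' and the rest of the argument leans on it so heavily that it collapses without it.  You assert that ``the further reductions of the $N^{*}$-bundle structure from Section \ref{sec_flat_conn}'' render the orbit fibers totally geodesic, and then feed $Rc_{\widehat N}(V,V)=0$ into O'Neill to get $Rc_{N}\le 0$ on the fiber, whence Milnor gives abelianity.  But Section \ref{sec_flat_conn} supplies the flat connection on $V^{ad}$, its limit central decomposition, and the lattice/special-linear structure-group reduction; it does \emph{not} make the $N$-orbits totally geodesic.  In fact, totally geodesic fibers is essentially the \emph{conclusion} of the whole analysis, not an input.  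Without it, the vertical O'Neill formula (Proposition \ref{prop_curv_comp}.1) carries the extra terms $-\tfrac{1}{2}\hat\triangle\hat h_{ab}-\check H^{i}\hat T_{abi}+2\hat T_{a}^{\ \sigma i}\hat T_{b\sigma i}$, and tracing produces $R^{a}=-|\nabla\mu|^{2}+\triangle\mu\cdot(\dots)-|A|^{2}$-type identities in which the sign of the fiber scalar curvature $R^{a}$ (which is $\le 0$, \emph{strictly negative} if $N$ is non-abelian) works against you rather than for you.  So the Milnor step is circular: you would need totally geodesic fibers (or at least vanishing mean curvature) to conclude $Rc_{N}\le 0$, but you would need to already know $N$ abelian with vanishing integrability tensor to establish that the fibers are totally geodesic.

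What the paper actually does is break this circle by a careful induction over the limit central decomposition $0\subseteq V^{c^{0}}\subseteq\cdots\subseteq V^{c^{k}}=V^{ad}$.  The base case $c^{0}$ lies in the center of $\eta$, so $C^{0}$ is abelian and $R^{0}=0$ with no a~priori assumption; the traced O'Neill identity for the intermediate submersion by $C^{0}$ then yields a genuine $\mu^{ad}$-superharmonicity $\triangle_{Y}\mu^{0,X}-\langle\nabla\mu^{0,X},\nabla\mu^{ad}\rangle\le 0$, and since $\mu^{0,X}\to\infty$ along the exceptional set $Y_{E}$ the minimum is attained in $Y\setminus Y_{E}$, giving $\mu^{0,X}\equiv const$, hence $A^{C^{0}}=0$ and, crucially, $Y_{E}=\emptyset$ (so orbifold singularities at worst).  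The vanishing of $A^{C^{0}}$ then implies $C^{1}$ is abelian, and the induction proceeds until $N$ is abelian with $A^{N}=H^{N}=0$.  Even then the fibers are only known to be minimal with integrable horizontal distribution; killing the full second fundamental form $T^{N}$ requires a separate, global step: $Rc_{X}\ge 0$ forces the orbifold splitting $\tilde X\approx X^{c}\times\mathbb{R}^{l}$, $T^{N}$ vanishes in the flat directions, and a final maximum principle on $h^{ad,X}(\xi^{j},\xi^{j})$ over the compact factor $X^{c}$ finishes the job.  None of this is optional: the diameter bound (compactness of $Y$ and of $X^{c}$) is used at two separate maximum-principle steps, and the blow-up of $\mu^{0,X}$ near $Y_{E}$ is what rules out worse-than-orbifold singularities, which your last paragraph merely asserts.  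Your proposal also never establishes that the exceptional $O(n)$-isotropy set is empty, which is where the orbifold conclusion really comes from.
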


\begin{remark}
We could replace the $|sec|\leq K$ assumption with $conj(M)\geq K^{-1}>0$, where $conj$ is the conjugacy radius of the exponential map.  This follows because as in \cite{A},\cite{PWY} we then have a $\{A\}^{1,\alpha}_{0}$-bounded geometry control over $M$.  In this case $|sec_{X}|\leq C=C(n,K)$.  It will also follow from the proof that if we had only assumed $Rc_{i}\leq \epsilon_{i}\rightarrow 0$ then $X$ still has at worst orbifold singularities.
\end{remark}

One could also attempt to prove the above using a groupoid approach.  In this context one would first need to construct a sequence of nested subgroupoid structures similar to the subbundles of $V^{ad}$ in the limit central decomposition in Section \ref{sec_flat_conn}.  Further the holonomy of the flat connection in Section \ref{sec_flat_conn} plays a crucial role in the proof of the above theorem and an analogue of that must be constructed.  Though it seems reasonable that constructions similar to those of the paper could be made in the groupoid context, because of the role of the flat connection of $V^{ad}$ both directly and in the construction of the limit central decomposition it feels more natural to the authors to work in the category of vector bundles.

Now let us first see that the above is sharp, in that if we remove any of the hypothesis then the result fails to be true.  In particular if we remove the completeness or sectional bound assumption then the result must fail, so that the proof must be highly global in nature.  In fact the proof will be through a series of maximum principles on $X$ and the analysis involved will depend crucially on the $N^{*}$-bundle structure we have been discussing.  First let us see some examples, the first of which is due to Gross and Wilson \cite{GW}.

\begin{example}
There exists Ricci flat metrics $(K3,g_{i})$ on a $K3$ such that $(K3,g_{i})\stackrel{GH}{\rightarrow}(X,d)$, where $X$ is a topological two sphere and the distance function $d$ is induced from a Riemannian metric $g_{\infty}$ on $X$ which is smooth away from $24$ points, where the curvature blows up.  In particular $g_{\infty}$ is not a Ricci flat orbifold metric.
\end{example}

The above tells us two things.  First the sectional bound in Theorem \ref{thm_nt2_main2} is quite necessary.  Secondly by picking an open set $U\subseteq X$ away from the 24 singular points and considering the subsets $U_{i}\subseteq(K3,g_{i})$ which converge to $U$ we see that the completeness assumption is necessary, because in this case it is not too hard to check that on $U_{i}$ all the other conditions remain valid but $U$ is certainly not Ricci flat.  In fact the following example tells us the diameter assumption is necessary as well:

\begin{example}
Consider the space $\mathds{C}\times S^{1}$ with the natural flat metric $g$ and the $S^{1}$ action by $\lambda\cdot(z,\bar\lambda)=(\lambda z,\lambda\bar\lambda)$, where we have identified $S^{1}$ with the unit complex numbers.  Then we can consider the sequence $(\mathds{C}\times S^{1},g)/\mathds{Z}_{p}\stackrel{p\rightarrow\infty}{\rightarrow} (\mathds{C},g_{\infty})$.  Then the limit $(\mathds{C},g_{\infty})$ has strictly positive sectional curvature and in particular is not Ricci flat.
\end{example}

As a corollary of Theorem \ref{thm_nt2_main2} we get the following generalization of Gromov's Almost Flat theorem.  Gromov's theorem proves that there is a dimensional constant $\epsilon(n)$ such that if $diam(M^{n})=1$ and $|sec|\leq\epsilon$ then $M$ is topologically an infranil.  That is to say that though $M$ may not be a flat space that up to a finite cover $M$ is a sequence of fiber bundles of flat spaces over flat spaces.  In the case where $|Rc|$ is small this is too much to ask, but we see below that $M$ is now a sequence of fiber bundles of flat spaces over a Ricci flat space.

\begin{theorem}\label{cor_nt2_main1}
Let $(M^{n},g)$ be complete with $|sec|\leq K$ and $diam=1$.  Then there exists an $\epsilon=\epsilon(n,K)>0$ such that if $|Rc|\leq\epsilon$ then $M$ is an orbifold bundle over a Ricci flat orbifold $(X,d)$ with $|sec_{X}|\leq K$ whose fibers are infranil.
\end{theorem}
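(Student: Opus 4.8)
The plan is to argue by contradiction, combining Gromov precompactness with Theorem~\ref{thm_nt2_main2} and the equivariant fibration theory underlying the $N^{*}$-bundle. Suppose the statement fails. Then, with $n$ and $K$ fixed, there is a sequence $\epsilon_{i}\to 0$ and complete Riemannian manifolds $(M_{i}^{n},g_{i})$ with $|sec_{i}|\leq K$, $\mathrm{diam}(M_{i})=1$ and $|Rc_{i}|\leq\epsilon_{i}$, none of which is an orbifold bundle over a Ricci flat orbifold with $|sec|\leq K$ and infranil fibers. Note that each $M_{i}$ is automatically compact (complete plus bounded diameter), so no boundary issues arise. Since the $M_{i}$ have uniformly bounded sectional curvature and diameter, Gromov precompactness gives, after passing to a subsequence, $(M_{i}^{n},g_{i})\stackrel{GH}{\rightarrow}(X,d)$ for a compact length space $(X,d)$; by Theorem~\ref{thm_nt2_main2} the limit $(X,d)$ is a Ricci flat Riemannian orbifold with $|sec_{X}|\leq K$.

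Next I would produce the orbifold bundle structure. By Lemma~\ref{lem_frame} (cf.\ \cite{F1}) we may equip the frame bundles $FM_{i}$ with $O(n)$-invariant metrics $g^{FM}_{i}$ of uniformly bounded geometry so that $(FM_{i},g^{FM}_{i},O(n))\stackrel{eGH}{\rightarrow}(Y,g^{Y},O(n))$ with $(Y,g^{Y})/O(n)\approx(X,d)$, and by Theorem~\ref{thm_nt2_main1} there is an $N^{*}$-bundle $V^{T}\to Y\to X$ together with $O(n)$-equivariant Gromov--Hausdorff maps $f_{i}:FM_{i}\to Y$ realizing this convergence. Since $Y$ is a smooth manifold and the $f_{i}$ collapse $FM_{i}$ onto $Y$ with uniformly bounded curvature, for $i$ large Fukaya's fibration theorem \cite{F},\cite{F1},\cite{CFG} applies on the frame bundles: after a small perturbation $f_{i}:FM_{i}\to Y$ is a smooth $O(n)$-equivariant fiber bundle whose fibers are infranilmanifolds modelled on the nilpotent Lie algebra $\eta$ of the $N^{*}$-bundle. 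Dividing by the $O(n)$-action, which has finite principal isotropy by Definition~\ref{def_vstr}, the map $f_{i}$ descends to a map $\bar f_{i}:M_{i}\to X$: over the smooth locus of $X$ it is an honest infranil fiber bundle, and near an orbifold point with local group $\Gamma$ the fiber is the quotient by the induced finite $\Gamma$-action of the corresponding infranil fiber of $FM_{i}\to Y$, hence again an infranilmanifold up to finite cover. Thus $\bar f_{i}$ exhibits $M_{i}$ as an orbifold bundle over $(X,d)$ with infranil fibers.

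This contradicts the choice of the sequence: for all large $i$ the manifold $M_{i}$ is an orbifold bundle over the Ricci flat orbifold $(X,d)$ with $|sec_{X}|\leq K$ and infranil fibers. Hence the required $\epsilon=\epsilon(n,K)>0$ exists.

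I expect the main obstacle to be the last part of the second step: upgrading the collapsing map $f_{i}$ to an honest equivariant fiber bundle with infranil fibers and then checking that its $O(n)$-quotient is an orbifold bundle in the required sense, in particular controlling the behaviour over the orbifold singular set of $X$ and verifying that the fibers of $\bar f_{i}$ remain infranil (up to finite cover) after passing to the quotient. On the frame level this is exactly Fukaya's fibration theorem; the genuinely new bookkeeping is the descent through the finite principal isotropy, where the $G$-equivariant structure of the $N^{*}$-bundle and the compatibility of the $O(n)$-action with the affine/flat reductions in Definition~\ref{def_vstr} are used.
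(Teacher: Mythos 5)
Your overall strategy is exactly the paper's: argue by contradiction, invoke Gromov precompactness to pass to a Gromov--Hausdorff limit $(X,d)$, apply Theorem~\ref{thm_nt2_main2} to conclude $X$ is a Ricci flat orbifold with the curvature bound, and then use the frame-bundle fibration picture to obtain the orbifold bundle structure on $M_i\to X$ for large $i$.

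There is, however, a genuine gap at exactly the place you flag as the ``main obstacle.'' To descend from the equivariant fiber bundle $FM_i\to Y$ to an \emph{orbifold} bundle $M_i\to X$, it is not enough to know that the $O(n)$-action on $Y$ has \emph{finite principal isotropy} (as in Definition~\ref{def_vstr}). Finite principal isotropy is a generic statement on an open dense set of orbits; it leaves open the possibility that some orbits have positive-dimensional isotropy (the set $Y_E$ of exceptional orbits), and over such orbits the quotient map degenerates and the bundle fails to be an orbifold bundle. Likewise, knowing that $X$ is a Riemannian orbifold does not by itself identify its local groups $\Gamma$ with the $O(n)$-isotropy groups on $Y$, so you cannot conclude from ``$X$ is an orbifold'' that the relevant isotropy on $Y$ is finite everywhere.

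What closes the gap is a specific fact established inside the proof of Theorem~\ref{thm_nt2_main2}: the function $\mu^{0,X}$ is shown to be bounded by the maximum principle argument, which forces $Y_E=\emptyset$, i.e.\ the $O(n)$-action on $Y$ has finite isotropy everywhere, not just on a dense subset. It is precisely this no-exceptional-isotropy statement that the paper invokes to conclude that the quotient of the Fukaya fibration $FM_i\to Y$ by $O(n)$ is an orbifold bundle $M_i\to X$ with infranil fibers. You should cite that step explicitly rather than appealing to the definition's finite-principal-isotropy clause; with that single addition your argument matches the paper's.
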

\begin{remark}
As in the last theorem instead of the assumption $|sec|\leq K$ we may assume $conj\geq K^{-1}>0$.  It is also worth pointing out that if $T$ is the second fundamental form of the infranil fibers then for each $\delta>0$, arbitrarily small, one can pick $\epsilon=\epsilon(n,K,\delta)$ so that $|T|<\delta$.  This of course is far from true in general collapsing.
\end{remark}

Note that the above can also be viewed as a generalization of a pinching result of Anderson's \cite{An}. In Anderson's theorem, where one makes the noncollapsing assumption $inj\geq K^{-1}>0$ instead of $conj\geq K^{-1}$, then $M$ admits a Ricci flat metric.  In the collapsed scenario this is simply not the case, but the statement is that there does exist a nearby Ricci flat space $X$ that $M$ fibers over.  It even follows from the proof of the above that $X$ has a uniform curvature bound of $K$ (in the case where $conj\geq K^{-1}$ then $X$ has an \textit{apriori} curvature bound depending only on $n$ and $K$).

The organization of the paper is as follows.  In Section \ref{sec_notation} we begin with some notation.  This section can primarily be referred back to as needed.  Then in Section \ref{sec_estimates} we improve some estimates from \cite{CFG} on smoothed Gromov Hausdorff maps between Riemannian manifolds.  Namely in \cite{CFG} it is proved that if $f:(M_{0},g_{0})\rightarrow (M_{1},g_{1})$ is an $\epsilon$-Gromov Hausdorff map then we can modify $f$ by a center of mass technique to produce a smooth map which is a $\sqrt{\epsilon}$-GH map while having uniform $C^{2}$ bounds (independent of $\epsilon$) and being an almost Riemannian submersion.  Their technique cannot be modified to prove higher order bounds, which we will need, and so this is handled in Section \ref{sec_estimates}.

Section \ref{sec_framespace} studies and constructs various structures on the limit $Y$ of the frame bundles $FM_{i}$ of a collapsing sequence.  As mentioned it is convenient to have metrics $g^{FM}_{i}$ on the frame bundles that do not have the regularity issues that come from the canonical geometric construction of \cite{F1}, so first we handle this.  Then using the results of Section \ref{sec_estimates} and the constructions from \cite{CFG} we build \textsl{good} coordinates on $FM_{i}\rightarrow Y$.  Appendix \ref{sec_inf_bund} reviews some of the constructions of \cite{CFG} which we use, as well as some mild generalizations.

Once the technical constructions of Section \ref{sec_framespace} are complete we can begin to build the $N^{*}$-bundle.  This is done in two steps.  In Section \ref{sec_approx_V_str} we build approximate $N^{*}$-bundles for each $FM_{i}$.  This construction is the rigorous version of what has been outlined throughout this section.  The key point of the $N^{*}$-bundle though is that it is independent of $i$, and so in Section \ref{sec_V_str} we show how these approximate structures limit in an appropriately strong sense and use the limit to prove Theorem \ref{thm_nt2_main1}.

In Section \ref{sec_flat_conn} we mention some refinements of the construction of the canonical flat connection on $V^{ad}$.  This connection and its refinements are particularly important when the $M_{i}$ only have curvature bounded away from a controlled subset.  We will also use it in Section \ref{sec_thm2proof} when we prove Theorem \ref{thm_nt2_main2}.  Then in Section \ref{sec_geom_V_ad} we study limit geometrical structures on $V^{ad}$, which is also used in Section \ref{sec_thm2proof} to finish the proof of Theorem \ref{thm_nt2_main2}.

\section{Notation}\label{sec_notation}

Due to the unfortunate amount of notation required this section is meant to organize some of the various definitions which are used throughout, it can mainly referred back to as need be.  Much is standard, self explanatory and can be found in other sources, however a little is new and there are occasionally mild modifications to older notation.

Often we will be dealing with manifolds with boundary so we define

\begin{definition}
Let $(M,g)$ be a Riemannian manifold, possibly with boundary.  For all $\iota>0$ we define $M_{\iota} \equiv \{x\in M: d(x,\partial M)>\iota\}$.  If $M$ has no boundary then $M_{\iota} \equiv M$.  Let $M^{\circ}$ denote the interior of $M$.
\end{definition}

Also because we often deal with manifolds with boundary it is important to be clear by what we mean by the injectivity radius of the manifold.

\begin{definition}
Let $(M,g)$ be a Riemannian manifold, possibly with boundary.  For $x\in M$ let $inj_{M}(x)\leq d(x,\partial M)$ be the standard injectivity radius with $inj_{M}(x) = d(x,\partial M)$ being the well defined statement that the exponential map is a diffeomorphism up to the boundary.  Define the boundary injectivity radius $inj^{B}(M)\equiv sup\{\iota>0: \forall x\in M$ $inj_{M}(x)\geq min(\iota,d(x,\partial M)) \}$.
\end{definition}

In fact for the purposes of these results there are many ways one could define a boundary injectivity radius that would be equally suitable.  The importance is simply that once you push away from the boundary there is control of the exponential map.

There are two basic but essentially equivalent notions of having controlled geometry that we will be interested in.  The first is that of bounded curvature and the second is that of the existence of good (weak) coordinates.  For analysis purposes it is the second that is usually the useful one to deal with, while the first is more intrinsic and useful for the statement of theorems.  So to begin with we follow \cite{CFG} and introduce the following

\begin{definition}
Let $(M^{n},g,p)$ be a pointed Riemannian manifold possibly with boundary and $\{A\}^{k}_{0}$ a sequence of $k+1$ positive real numbers.  We say $(M,g,p)$ is $\{A\}^{k}_{0}$-regular (resp. interior regular) at $p$ if $\forall$ $r <\pi A_{0}^{-1/2}$ $B_{r}(p)$ has compact closure in $M$ (resp. $M^{\circ}$) with $|sec_{g}|\leq A^{0}$ and $|\nabla^{(j)} Rm_{g}|\leq A^{j}$ $\forall$ $1\leq j \leq k$ in $B_{r}(p)$.  We say $(M,g)$ is $\{A\}^{k}_{0}$-regular if it is $\{A\}^{k}_{0}$-regular at every point.
\end{definition}

The second notion is more useful while doing analysis and taking limits and is the notion of bounded geometry.  First we adopt the notation of \cite{PWY} and introduce the following, where $\mathds{R}^{n,+}$ denotes the closed upper half plane in $\mathds{R}^{n}$.

\begin{definition}
Let $M^{n}$ be a smooth manifold, then we call a mapping $\varphi: B_{r}(0) \subseteq \mathds{R}^{n} \rightarrow M$ (resp. $\varphi: B_{r}(0) \subseteq \mathds{R}^{n,+} \rightarrow M$ if $\varphi(0)$ is a boundary point) a weak coordinate system if $\varphi$ is a local diffeomorphism.  If $(M^{n},g)$ is Riemannian and $\{A\}^{k,\alpha}_{0}$ is a sequence of $k+1$ positive numbers with $0\leq\alpha<1$ and $r>0$ then we say $\varphi$ is $(\{A\}^{k,\alpha}_{0},r)$-bounded if, identifying $g$ with its pullback $\varphi^{*}g$, $e^{-\frac{A^{0}}{10}}\delta_{ij} \leq g_{ij} \leq e^{\frac{A^{0}}{10}}\delta_{ij}$ and for any multi-index $\mathbf{a}$ with $0\leq |\mathbf{a}| \leq k$ we have $r^{|\mathbf{a}|+\alpha}|| \partial_{\mathbf{a}}g_{ij}||_{C^{\alpha}} \leq A^{|\mathbf{a}|,\alpha}$.  We call $\varphi$ a weak harmonic coordinate system if locally $\varphi^{-1}:U\subseteq M \rightarrow \mathds{R}^{n}$ is a harmonic map.
\end{definition}

The only distinction between the $C^{k,\alpha}_{r}$ norm of $\varphi$ and the usual $C^{k,\alpha}$ norm considering $\varphi$ as a mapping from one Riemannian manifold into another is the $r$ weight.  The usual example of a weak coordinate system is to take $\varphi$ as the exponential map at a point when the conjugacy radius is strictly larger than the injectivity radius.  Now using such weak coordinate systems there is a natural notion of bounded geometry on a Riemannian Manifold:

\begin{definition}\label{def_boundedgeometry}
Let $(M,g)$ be a Riemannian manifold, $r>0$ and $\{A\}^{k,\alpha}_{0}$ be $k+1$ positive real numbers.  We say $(M,g)$ has $(\{A\}^{k,\alpha}_{0},r)$-bounded geometry at $x\in M$ if there exists a $(\{A\}^{k,\alpha}_{0},r)$-bounded weak coordinate system $\varphi:B_{r}(0)\rightarrow M$ with $\varphi(0)=x$.  We say $(M,g)$ has $(\{A\}^{k,\alpha}_{0},r)$-bounded geometry if this holds at each $x\in M$.  We say $(M,g)$ has harmonic bounded geometry if additionally $\varphi$ are harmonic weak coordinates.
\end{definition}

The following is a classic result, see \cite{T} and \cite{CH}:

\begin{lemma}\label{lem_weak_harm_coor}
Let $(M^{n},g)$ be $A^{0}$-interior regular at $x$.  Then for each $0\leq\alpha <1$ there exists $r_{h}=r_{h}(n,A^{0},\alpha)$ such that there exists a weak harmonic coordinate system $\varphi: B_{r_{h}}(0)\rightarrow M$ with $\varphi(0)=x$ and $||\varphi||_{C^{1,\alpha}_{r_{h}}}\leq 1$.
\end{lemma}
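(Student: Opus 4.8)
The plan is to obtain the lemma as the standard lower bound on the $C^{1,\alpha}$ harmonic radius under a two-sided sectional curvature bound (the Jost--Karcher / DeTurck--Kazdan construction), the only point being to track that every constant depends solely on $n$, $A^{0}$ and $\alpha$. \textbf{Step 1 (a preliminary weak chart).} First I would use comparison geometry: since $|sec_{g}|\leq A^{0}$ on every ball $B_{r}(x)$ with $r<\pi(A^{0})^{-1/2}$ and these balls have compact closure in $M^{\circ}$, Rauch comparison bounds the conjugacy radius at $x$ below by $\pi(A^{0})^{-1/2}$, so $\exp_{x}\colon B_{\rho}(0)\subseteq T_{x}M\cong\mathbb{R}^{n}\to M$ is an immersion --- hence a weak coordinate system --- for every $\rho<\pi(A^{0})^{-1/2}$, and $\hat g\equiv\exp_{x}^{*}g$ is a smooth metric on $B_{\rho}(0)$. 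Jacobi-field comparison then yields, on $B_{\rho_{0}}(0)$ with $\rho_{0}$ a fixed fraction of $(A^{0})^{-1/2}$, estimates of the form $|\hat g_{ij}-\delta_{ij}|\leq C(n)A^{0}|v|^{2}$ and $|\partial_{k}\hat g_{ij}|\leq C(n)A^{0}|v|$; in particular $\hat g$ is uniformly elliptic and Lipschitz (hence $C^{0,\alpha}$ for the given $\alpha$) with norms depending only on $n,A^{0}$.

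\textbf{Step 2 (harmonic coordinates).} Next I would solve the Dirichlet problem $\Delta_{\hat g}u^{i}=0$ on $B_{\rho_{0}}(0)$ with $u^{i}|_{\partial}=v^{i}$, the Euclidean coordinate functions; this is classically solvable since $\hat g$ is smooth, and $w^{i}\equiv u^{i}-v^{i}$ solves $\Delta_{\hat g}w^{i}=-\Delta_{\hat g}v^{i}$ with zero boundary data, where the right-hand side is $O(A^{0}|v|)$ in $C^{0,\alpha}$. Schauder estimates for the uniformly elliptic operator $\Delta_{\hat g}$ then give $\|w^{i}\|_{C^{1,\alpha}(B_{\rho_{0}/2})}\leq C(n,A^{0})\rho_{0}$, so after shrinking $\rho_{0}=\rho_{0}(n,A^{0})$ the matrix $du(0)$ is invertible and $u=(u^{1},\dots,u^{n})$ restricts to a diffeomorphism of some $B_{r_{1}}(0)$ onto a neighborhood of $0$. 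Composing, $\varphi\equiv\exp_{x}\circ\,u^{-1}$ on $B_{r_{1}}(0)$ is then a weak harmonic coordinate system with $\varphi(0)=x$.

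\textbf{Step 3 (the a priori estimate).} Finally I would improve this to $\|\varphi\|_{C^{1,\alpha}_{r_{h}}}\leq1$ using the elliptic system the metric satisfies in harmonic coordinates. In the chart $\varphi$ the metric $\tilde g$ is a priori only $C^{0,\alpha}$, but it satisfies $\tilde g^{kl}\partial_{k}\partial_{l}\tilde g_{ij}=-2\,Rc_{ij}+Q(\tilde g,\partial\tilde g)$ with $|Rc|\leq(n-1)A^{0}$, and elliptic regularity for this quasilinear system upgrades $\tilde g$ to $W^{2,p}\cap C^{1,\beta}$ for every $p<\infty$, $\beta<1$, with bounds depending only on $n,A^{0},\alpha$. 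A rescaling then fixes the radius: at scale $r$ the curvature bound becomes $r^{2}A^{0}$ while the harmonic radius scales by $r$, so choosing $r_{h}=r_{h}(n,A^{0},\alpha)$ with $r_{h}^{2}A^{0}$ below the threshold of the preceding estimates forces $\tilde g$ into an arbitrarily small $C^{1,\alpha}$-neighborhood of the Euclidean metric on $B_{r_{h}}(0)$, which --- unwinding the definition of the weighted $C^{1,\alpha}_{r_{h}}$ norm --- is precisely $\|\varphi\|_{C^{1,\alpha}_{r_{h}}}\leq1$.

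The step I expect to be the main obstacle is Step 3: the hypothesis $|sec|\leq A^{0}$ controls the \emph{second} derivatives of the metric, whereas the conclusion is a \emph{Hölder} bound on its first derivatives, and the only PDE bridging this gap is quasilinear with leading coefficient the unknown metric itself. One therefore cannot get the estimate from a single application of elliptic regularity in the exponential chart; it has to be run together with the harmonic-coordinate construction and the rescaling of Step 3 (equivalently, via a blow-up argument: were $r_{h}$ not bounded below, rescaling to unit harmonic radius would force the curvature to $0$, and the uniform estimates above together with lower semicontinuity of the harmonic radius under $C^{1,\alpha}$-convergence would produce a flat limit of infinite harmonic radius, a contradiction). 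Everything else --- the conjugacy radius bound, Jacobi-field comparison, and linear Schauder theory --- is classical; note in particular that only the conjugacy radius enters, so no injectivity radius hypothesis is needed, consistent with the weak-coordinate formulation.
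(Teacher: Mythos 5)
The paper does not prove this lemma; it is stated as ``a classic result'' with citations to Taylor \cite{T} and Calabi--Hartman \cite{CH}, which (Taylor's especially) contain exactly the harmonic-coordinate construction and $C^{1,\alpha}$ harmonic-radius lower bound under a two-sided sectional curvature bound that you have reconstructed. Your proof is correct and is, in essence, the Jost--Karcher / DeTurck--Kazdan / Anderson argument, properly adapted to the weak-coordinate (conjugacy-radius) setting so that no injectivity-radius hypothesis is required --- which is the only point where the ``weak'' qualifier matters. One small inaccuracy that only helps you: in Step~3 the metric $\tilde g$ in the harmonic chart is a priori Lipschitz, not merely $C^{0,\alpha}$, since $\hat g$ is Lipschitz by the Jacobi-field estimates of Step~1 and the harmonic change of coordinates is $C^{2,\alpha}$ by Schauder; this is precisely what makes the quadratic term $Q(\tilde g,\partial\tilde g)$ lie in $L^{\infty}$ and lets the $W^{2,p}$ bootstrap on the quasilinear system start without circularity, so the direct quantitative route and the blow-up route you offer in Step~3 are both available.
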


The above lemma essentially turns a $\{A\}^{k}_{0}$-regular condition into a  $(\{B\}^{k+1,\alpha}_{0},r)$-bounded condition.  More generally it will be important to write vector bundles over a Riemannian manifold in geometrically useful coordinates.  To this end we define

\begin{definition}
Let $(M,g)$ be a Riemannian manifold with $V\rightarrow M$ a vector bundle of rank $l$.  We call a vector bundle map $\varphi^{V}:B_{r}(0)\times\mathds{R}^{l}\rightarrow V$ a weak coordinate system if it is also a local diffeomorphism.  If $h$ is a fiber metric on $V$ we say $\varphi^{V}$ is $(\{A\}^{k,\alpha}_{0},r)$-bounded if the induced map $\varphi:B_{r}\rightarrow M$ is $(\{A\}^{k,\alpha}_{0},r)$-bounded such that in coordinates $e^{-\frac{A^{0}}{10}}\delta_{\hat i \hat j} \leq h_{\hat i \hat j} \leq e^{\frac{A^{0}}{10}}\delta_{\hat i \hat j}$ and for any multi-index $\mathbf{a}$ with $0\leq |\mathbf{a}| \leq k$ we have $r^{|\mathbf{a}|+\alpha}|| \partial_{\mathbf{a}}h_{\hat i \hat j}||_{C^{\alpha}} \leq A^{|\mathbf{a}|,\alpha}$.
\end{definition}

It is now straightforward to generalize weak coordinate systems from local diffeomorphisms from open sets in $\mathds{R}^{n}$ into our manifolds to local diffeomorphisms from other spaces with special properties into our manifolds. We will want to consider geometries on Lie groups which are appropriately nice:

\begin{definition}
Let $G$ be a Lie Group and $h$ a right invariant metric on $G$.  We define the norm of $h$ as $||h||\equiv ||ad||_{h}$, where $ad:\mathfrak{g}\times\mathfrak{g}\rightarrow\mathfrak{g}$ is the adjoint operator.  We say $h$ is normalized if $||h||\leq 1$.
\end{definition}

Notice that the above gives a natural way of taking limits of simply connected Lie Groups of bounded dimension.  Namely if $(\mathfrak g_{i},h_{i})$ is a sequence of $n$-dimenional lie algebras with normalized metrics $h_{i}$ and adjoint operators $ad_{i}$, then by identifying an orthonormal basis in $\mathfrak g_{i}$ with a standard basis in $\mathds{R}^{n}$ and passing to a subsequence we see that the $ad_{i}$ convergence to an adjoint operator $ad_{\infty}$ on $\mathds{R}^{n}$.  The Jacobi identity of the induced lie algebra still holds in the limit and so $ad_{\infty}$ itself defines a lie algebra $\mathfrak g_{\infty}$.  Note that even if $\mathfrak g_{i}\approx \mathfrak g$ are all isomorphic as lie algebras that $\mathfrak g_{\infty}$ may not be isomorphic to $\mathfrak g$ (it may be more abelian in fact).

We will need one more definition from this section for the paper.

\begin{definition}
Let $N$ be a simply connected nilpotent Lie Group.  We call a map $\varphi: B_{r}(0)\times N \rightarrow M$ a weak nilpotent coordinate system if $\varphi$ is a local diffeomorphism such that there exists a discrete subgroup $\Lambda \leq N\rtimes Aut(N)$ with the property that $\varphi(x\cdot\lambda)=\varphi(x) $ $\forall x\in B_{r}\times N$ and $\lambda\in\Lambda$ and the induced map $\varphi:B_{r}\times(N/\Lambda)\rightarrow M$ is a diffeomorphism onto its image.  If $(M,g)$ is Riemannian and $(N,h)$ is normalized we say $\varphi$ is $(\{A\}^{k,\alpha}_{0},r)$-bounded if when $B_{r}\times N$ is equipped with the natural product metric $\delta_{ij}+h_{ij}$ and we identify $g$ with its pullback metric on $B_{r}\times N$ then $e^{-\frac{A^{0}}{10}}(\delta+h)_{ij} \leq g_{ij} \leq e^{\frac{A^{0}}{10}}(\delta+h)_{ij}$ and for any multi-index $\mathbf{a}$ with $0\leq |\mathbf{a}| \leq k$ we have $r^{|\mathbf{a}|+\alpha}||\nabla_{\mathbf{a}}g_{ij}||_{C^{\alpha}} \leq A^{|\mathbf{a}|,\alpha}$.  The subgroup $\Lambda$ will sometimes be referred to as the covering group $\pi_{\varphi}$ of the mapping $\varphi$.
\end{definition}

\section{Higher order Gromov Hausdorff Estimates}\label{sec_estimates}

This section is dedicated to proving some technical estimates needed for the results.

We begin with some notation, in addition to which we will use the notation from Section \ref{sec_notation} and Appendix \ref{sec_inf_bund} frequently.  We will be interested in a slight modification of the usual notion of Gromov Hausdorff approximations:

\begin{definition}
Let $(X_{0},d_{0})$ and $(X_{1},d_{1})$ be complete metric spaces.  We say $f:X_{0}\rightarrow X_{1}$ is an $(r,\epsilon)$ Gromov Hausdorff approximation if
\begin{enumerate}
\item $f(X_{0})$ is $\epsilon$-dense in $X_{1}$

\item $\forall x,y\in X_{0}$ we have $|d_{0}(x,y)-d_{1}(f(x),f(y))|<\epsilon(\frac{1}{r}d(x,y)+1)$
\end{enumerate}
\end{definition}

Notice by the triangle inequality that condition $2)$ above is up to scale equivalent to insisting that $|d_{0}(x,y)-d_{1}(f(x),f(y))|<\epsilon$ for any $x,y\in X_{0}$ with $d(x,y)<r$.  Many of the constructions of this paper are local and as such it will be more convenient to work with $(r,\epsilon)$ GH approximations as opposed to the usual $\epsilon$ GH approximations, though fundamentally there is little difference for the purposes of this paper.  In fact it is worth pointing out that the results of this paper involving the $(\epsilon,r)$-GH distance can be also proven for the usual GH-distance function, however there is more technical work involved and we do not need it.  Of course we may also talk about $(r,\epsilon)$ $G$-equivariant GH approximations in the same sense, where $G$ is a compact Lie group.

Now if $(M_{0},g_{0})$ and $(M_{1},g_{1})$ are Riemannian manifolds and $f:M_{0}\rightarrow M_{1}$ is a submersion then we label $\mathcal{V}\subseteq TM_{0}$ and $\mathcal{H}\subseteq TM_{0}$ as the vertical and horizontal subbundles of $TM_{0}$, respectively. We call a horizontal vector field $X\in M_{0}$ basic if it is the horizontal lift of a vector field in $M_{1}$.

When $M_{0}$ and $M_{1}$ are $\epsilon$-close in the usual Gromov Hausdorff sense then one of the main technical tools of \cite{CFG} is the construction of a smooth map $f:M_{0}\rightarrow M_{1}$ between $M_{0}$ and $M_{1}$ which is a $\delta$-GH map such that this map is an almost Riemannian submersion in the $C^{1}$ sense (that is, $1-\delta\leq |df[X]| \leq 1+\delta$ for any unit horizontal vector $X$) with a uniform $C^{2}$ bound.  Higher derivative bounds are available but they necessarily degenerate as $\delta\rightarrow 0$ under the construction of \cite{CFG}.  Also it does not follow from their construction that $f$ is an almost Riemannian submersion in the $C^{2}$ sense, which is to say that although if $X$ is a unit horizontal vector then we know that $|\nabla^{2}f(X,X)|$ is bounded we would additionally like to be able to say that it is small.  The main goal of this section is to show this smooth map may be constructed to have sharp higher order derivative bounds and such that $f$ is an almost Riemannian submersion in the $C^{2}$ sense.  In this context sharpness of the derivative bounds means that if $M_{0}$ and $M_{1}$ are $\{A\}^{k}_{0}$-regular, then the map $f$ has uniform bounds on the first $k+2+\alpha$ derivatives which depend only on $\{A\}^{k}_{0}$ and some other necessary intrinsic data (but not $\epsilon$).

Throughout this section $G$ is taken to be a compact Lie group.  So in the spirit of \cite{CFG} and the above we define the following:

\begin{definition}
Let $(M_{0},g_{0})$ and $(M_{1},g_{1})$ be Riemannian $G$-manifolds with $\{B\}^{k,\alpha}_{1}$ positive real numbers with $k\in \mathds{N}$ and $0\leq\alpha<1$.  We say a smooth function $f:M_{0}\rightarrow M_{1}$ is a $\{B\}^{k,\alpha}_{1}$-bounded $(r,\epsilon)$-$G$-Gromov Hausdorff Approximation if
\begin{enumerate}
\item $f$ is an $(r,\epsilon)$ $G$-equivariant Gromov Hausdorff Approximation.

\item $diam(f^{-1}(y))< \epsilon$ for $\forall y\in M_{1}$

\item We have $\forall$ $X\in \mathcal{H}$ that $1-\epsilon\leq |df[X]|\leq 1+\epsilon$, and if $k\geq 2$ then additionally we have that $|\nabla^{2}f(X,X)|\leq B^{(2)}\epsilon$

\item $||f||_{C^{j,\alpha}}\leq B^{j,\alpha}$  $\forall 1 \leq j \leq k$

\item $f$ is $G$-equivariant
\end{enumerate}
\end{definition}

When there is no confusion we will simply refer to such functions as smooth GH approximations.

Our main purpose is to show the following

\begin{theorem}\label{thm_smooth_GH}
Let $(M_{0}^{n},g_{0})$ and $(M_{1}^{m},g_{1})$ be Riemannian $G$-manifolds which are $\{A\}^{k}_{0}$-regular, possibly with boundary, with $inj^{B}(M_{1})\geq \iota >0$, $\iota\leq \sqrt{A^{0}}$ and $n\geq m$.  Then for each $0\leq\alpha<1$ there exists $\{B\}^{k+2,\alpha}_{0}=\{B(n,A,\iota,\alpha,G)\}^{k+2,\alpha}_{0}$ so that $\forall \epsilon>0$ $\exists$ $\delta=\delta(n,A,\iota,\alpha,G,\epsilon)>0$ such that if $f:M_{0,\iota/2}\rightarrow M_{1,\iota/2}$ is an $(\sqrt{A^{0}},\delta)$ $G$-equivariant GH approximation, then there exists a $\{B\}^{k+2,\alpha}_{0}$-bounded $(\sqrt{A^{0}},\epsilon)$ $G$-eGH approximation $f_{\epsilon}:M_{0,\iota}\rightarrow M_{1}$, with $M_{1,2\iota}\subseteq f_{\epsilon}(M_{0,\iota}) \subseteq M_{1,\iota/2}$.
\end{theorem}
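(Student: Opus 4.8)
The plan is to upgrade the center-of-mass smoothing from \cite{CFG} so that it produces sharp (i.e. $\epsilon$-independent) higher-order bounds and an almost Riemannian submersion in the $C^2$-sense. I would begin by fixing the harmonic weak coordinates provided by Lemma \ref{lem_weak_harm_coor}: at scale $r_h = r_h(n,A^0,\alpha)$ both $M_0$ and $M_1$ carry $(\{B\}^{k+2,\alpha}_0,r_h)$-bounded weak harmonic charts (here the $\{A\}^k_0$-regularity of $M_i$ converts into $C^{k+1,\alpha}$-control of the metric in harmonic gauge, and since the metric enters, one more derivative is free, giving $C^{k+2,\alpha}$). These are the coordinates in which all the local computations will be carried out, and the injectivity radius hypothesis $\mathrm{inj}^B(M_1)\ge\iota$ guarantees the charts on $M_1$ are honest embeddings away from the boundary at a definite scale. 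The point of working in harmonic gauge rather than the exponential gauge of \cite{CFG} is precisely that harmonic coordinates give one extra derivative and eliminate the curvature-derivative loss that forces the \cite{CFG} bounds to degenerate.

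Next I would recall the construction: given the rough $(\sqrt{A^0},\delta)$-GH map $f$, one first convolves it against a fixed smooth cutoff in the $M_0$-variable (smoothing in local harmonic charts on $M_0$, then averaging over the compact group $G$ to restore equivariance and over a cover using a partition of unity to globalize) and then applies the center-of-mass projection $\mathrm{cm}\colon (\text{probability measures on small balls in } M_1)\to M_1$, which is smooth with bounds depending only on the $C^{k+2,\alpha}$-geometry of $M_1$ at scale $\iota$. The key estimate to prove is that in harmonic charts the mollified map $\tilde f$ satisfies $\|\tilde f\|_{C^{j,\alpha}}\le B^{j,\alpha}(n,A,\iota,\alpha,G)$ for all $1\le j\le k+2$, uniformly in $\delta$: this is because each derivative that would cost a factor of $\delta^{-1}$ from differentiating the mollifier is paired, via the $\delta$-GH hypothesis and the almost-isometry in condition~2 of the GH-approximation definition, with a factor of $\delta$ coming from the oscillation of $f$ at scale $r_h$; the net effect is a bound in terms of the ambient geometry only. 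Composition with the uniformly-bounded $\mathrm{cm}$ map then gives the claimed $C^{k+2,\alpha}$-bounds on $f_\epsilon$, and choosing the mollification radius (and hence $\delta$) small enough makes $f_\epsilon$ a $(\sqrt{A^0},\epsilon)$-GH map with fibers of diameter $<\epsilon$. The image containments $M_{1,2\iota}\subseteq f_\epsilon(M_{0,\iota})\subseteq M_{1,\iota/2}$ follow by a degree/covering argument together with $\epsilon$-density and the boundary-injectivity control, exactly as in \cite{CFG}.

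The main obstacle — and the genuinely new part relative to \cite{CFG} — is the $C^2$-almost-submersion assertion $|\nabla^2 f_\epsilon(X,X)|\le B^{(2)}\epsilon$ for unit horizontal $X$. One only knows a priori that $f$ is a $\delta$-GH map, which at first order forces $1-\delta\le|df_\epsilon[X]|\le 1+\delta$ after smoothing, but at second order the naive estimate gives only a $\delta$-\emph{independent} bound on $\nabla^2 f_\epsilon$, not smallness. To get smallness I would argue as follows: the obstruction to $\nabla^2 f_\epsilon(X,X)$ being small is measured by how far the pulled-back metric $f_\epsilon^* g_1$ is from being parallel along horizontal directions, and by a blow-up/contradiction argument — suppose for a sequence $\delta_i\to 0$ one has $|\nabla^2 f_{\epsilon,i}(X_i,X_i)|\ge c>0$ — one passes to a limit in the uniformly $C^{k+2,\alpha}$-bounded family (using Arzelà–Ascoli in the harmonic charts) to obtain a genuine Riemannian submersion between limit spaces that nonetheless has a nonvanishing horizontal Hessian at a point, which is impossible for the limit map (a limit of $\delta_i$-isometries that is smooth must be a local isometry on the horizontal distribution, hence totally geodesic in horizontal directions). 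The equivariance is preserved throughout because every step — mollification, center of mass, the blow-up limit — commutes with the isometric $G$-action, so no separate argument is needed there; the only care is that $G$-averaging does not spoil the almost-submersion property, which holds because averaging over a compact group contracts $C^{j,\alpha}$-norms.
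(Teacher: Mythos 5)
Your overall strategy --- center of mass at scale comparable to $\iota$ to get regularity, plus a compactness/contradiction argument to get the $C^{2}$-almost-submersion property --- is the same strategy the paper uses, but there are two concrete gaps in the way you implement it.

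First, the estimate you call the ``key estimate'' does not actually hold by the argument you sketch. You claim that each derivative costing a factor $\delta^{-1}$ from the mollifier is cancelled by a factor $\delta$ from the oscillation of $f$, giving uniform $C^{j,\alpha}$ bounds for $j$ up to $k+2$. This pairing works only at first order: the oscillation of an $(\sqrt{A^{0}},\delta)$-GH map over a ball of radius $\delta$ is $O(\delta)$, so dividing by $\delta$ gives an $O(1)$ first derivative, but the $j$-th derivative picks up $\delta^{-j}$ and you still only have $O(\delta)$ oscillation to pay against it. If you mollify at a scale tied to $\delta$, the second and higher derivatives blow up as $\delta\to 0$, which is precisely the problem with the original \cite{CFG} construction that this theorem is designed to fix. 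The paper's resolution is to perform the center-of-mass average at a \emph{fixed} scale $\lambda=\mu\iota$ (independent of $\delta$) and to replace $g_{i}$ by smoothed metrics $\tilde g_{i}$ that are $\{C\}^{\infty}_{0}$-regular but $C^{k+1,\alpha}$-close to $g_{i}$; then $\tilde f_{\lambda}$ automatically has $C^{\infty}$ bounds for $\tilde g_{i}$, hence $C^{k+2,\alpha}$ bounds for $g_{i}$, with no delicate cancellation needed. Your appeal to harmonic coordinates would also work for the regularity conversion, but it does not by itself substitute for the fixed-scale averaging; you still need to say explicitly that $\lambda$ is bounded below in terms of $\iota$, and then the burden shifts entirely to showing that a fixed-$\lambda$ average of a $\delta$-GH map is still a good GH approximation once $\delta$ is small, which is exactly what your compactness step must deliver.

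Second, in your blow-up argument the crucial claim that the limit map is a Riemannian submersion (hence has vanishing horizontal Hessian) is asserted but not justified. The sentence ``a limit of $\delta_i$-isometries that is smooth must be a local isometry on the horizontal distribution'' is the conclusion, not an argument for it: a $C^{k+2,\alpha}$-limit of $\frac12$-submersions with shrinking fiber diameters is, a priori, only a $\frac12$-submersion in the limit. What forces the limit to be an exact Riemannian submersion is the structural input from collapsing theory: one must pass to the universal covers of the preimages $f^{-1}_{i,\epsilon}(B_{2r}(y_i))$, invoke the injectivity-radius bound there, extract an equivariant limit $(\tilde X_{\infty},N)\to Y_{\infty}$ where $N$ is a Lie group acting isometrically, and observe that the $\Lambda_i$-orbits are $\delta_i$-dense in the fibers, so the fibers of the limit map coincide with the $N$-orbits. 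Then the limit map is a group quotient map, hence a Riemannian submersion, and the $C^{2}$-convergence gives the contradiction. Without this identification of the limit fibers with group orbits, there is no reason a priori that the horizontal Hessian of the limit vanishes. You should state this step explicitly; it is where the nilpotent structure actually enters the proof.
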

\begin{remark}
The assumption $f:M_{0,\iota/2}\rightarrow M_{1,\iota/2}$, as opposed to $f:M_{0}\rightarrow M_{1}$, simply guarantees that there do no exist any small holes in $M_{0}$ that do not correspond to holes in $M_{1}$.
\end{remark}

Before going to the proof we make a brief remark about the center of mass technique, as in \cite{BK}, which is used in the paper.  Namely if $(M^{n}_{0,g_{0}})$ and $(M^{m}_{1},g_{1})$ are smooth Riemannian manifolds with $|sec_{g_{i}}|\leq 1$ and $inj(M_{1})\geq \iota > 0$, then we know that if $f:M_{0}\rightarrow M_{1}$ is measurable and $\epsilon>0$ is such that $f(B_{\epsilon}(x))\subseteq B_{\iota/4}(f(x))$ for each $x\in M_{0}$ then we can apply the center of mass technique to $f$ to get a nearby map $f_{\epsilon}$ which is smooth.  However with this technique we \textit{apriori} have only $C^{2}$ bounds, which depend on $n,m,\epsilon$, for the map $f_{\epsilon}$ because of our use of the distance functions on $M_{0}$ and $M_{1}$.  This is not quite sufficient since curvature bounds on $M_{i}$ should imply an optimal bound of $C^{2,\alpha}$ for our functions $f_{\epsilon}$.

Now there are several ways of dealing with this problem, for instance using harmonic approximations for the distance functions during the averaging process.  The approach we will take is slightly more convenient for other constructions and is as follows:  Fix a small number $\xi << 1$ once and for all for this construction.  As in \cite{A} and \cite{PWY} we can find metrics $\tilde g_{0}$ on $M_{0,\iota}$ and $\tilde g_{1}$ on $M_{1,\iota}$ which are $\{A\}^{\infty}_{0}$-regular such that $g_{i}e^{-\xi}\leq \tilde g_{i} \leq g_{i}e^{\xi}$ and so that in appropriate weak coordinates around each point the new and old metrics are within an \textit{apriori} $C^{1,\alpha}$ distance from one another (depending only on $n,\iota,\alpha$).  In particular the injectivity radius of $M_{1,\iota}$ is at least $\iota/2$.  Now we may do the center of mass technique on $f$ with respect to our new metrics to get the function $\tilde f_{\epsilon}$. Because the perturbed metrics are $\{A\}^{\infty}_{0}$-regular it is easy to check that $\tilde f_{\epsilon}$ has \textit{apriori} $C^{\infty}$ bounds with respect to the $\tilde g_{i}$ geometries.  Since the $g_{i}$ and $\tilde g_{i}$ have bounded $C^{1,\alpha}$ distance this thus tells us that $\tilde f_{\epsilon}$ has \textit{apriori} $C^{2,\alpha}$ bounds with respect to the $g_{i}$ geometries, which only depend on $n,m,\epsilon,\alpha$.  Of course more generally had we assumed the $g_{i}$ were $\{A\}^{k}_{0}$-regular then the same trick tells us that $\tilde f_{\epsilon}$ is $\{B\}^{k+2,\alpha}_{0}=\{B(n,m,A,\epsilon,\alpha)\}^{k+2,\alpha}_{0}$-regular.

Now we can continue with the proof:

\begin{proof}
By scaling we can assume $A^{0}=1$ without loss of generality (so $\iota\leq 1$), and we can assume $\delta < \iota/100$.  As in \cite{BK} if $\delta$ is sufficiently small, depending only on $n,A^{0}$ and $\iota$, we can assume $f$ is $G$ equivariant by a center of mass averaging.  So there is no loss in assuming this.  Now let us pick a $\lambda = \mu\iota$ with $\mu<< 1$, the number $\mu$ will be chosen later but will be considered fixed and will depend only on $n$ and $A^{0}$.  Let $\tilde g_{0}$ and $\tilde g_{1}$ be $\{C\}^{\infty}_{0}=\{C(n,A,\iota)\}^{\infty}_{0}$-regular metrics on $M_{0,\iota/2}$,$M_{1,\iota/2}$ as in \cite{A} \cite{PWY} for which in appropriate weak coordinate systems with $k+2+\alpha$ bounds the metrics $\tilde g_{i}$ and $g_{i}$ are \textit{apriori} $k+1+\alpha$ close.  As in the paragraphs proceeding this proof by the perturbed $\lambda$-center of mass we can construct $\tilde f_{\lambda}:M_{0,3\iota/4}\rightarrow M_{1,3\iota/4}$.  Notice that $f_{\lambda}$ has $\{B\}^{\infty}_{0}=\{B(n,A,\lambda)\}^{\infty}_{0}$ bounds with respect to the metrics $\tilde g_{0}$ and $\tilde g_{1}$, and hence $\{B\}^{k+2,\alpha}_{0}=\{B(n,A,\lambda)\}^{k+2,\alpha}_{0}$ bounds with respect to the metrics $g_{0}$ and $g_{1}$.  Now the first observation is that by the proof of Theorem 2.6 of \cite{CFG} we can pick $\mu=\mu(n,A^{0})$ so that $\tilde f_{\lambda}$ is a $\frac{1}{2}$-Riemannian submersion with $diam(\tilde f_{\lambda}^{-1}(y))\leq c(n,A^{0})\delta$ for each $y\in \tilde f(M_{0,3\iota/4})$.

Now the claim is that for any $\epsilon>0$ we can pick $f_{\epsilon}\equiv \tilde f_{\lambda}$ as our desired map, for $\delta$ sufficiently small.  Notice in \cite{CFG} the corresponding $f_{\epsilon}$ is chosen by letting $\lambda\rightarrow 0$ as $\epsilon\rightarrow 0$.  This has the advantage of keeping the global GH approximation property (as opposed to the more local $(r,\epsilon)$ GH property which we will show is preserved), but makes higher order estimates less clear.  In fact one can carry out a similar proof to the below for the $\lambda\rightarrow 0$ case, but it is more technical and requires additional estimates.  However the intuition for why one should not need to make $\lambda\rightarrow 0$ is that if we write $\tilde f_{\lambda}$ in local weak coordinates then we see $\tilde f_{\lambda}$ will have a very dense discrete symmetry forced upon it.  The derivative bounds on $\tilde f_{\lambda}$ thus force this to correspond to an 'almost' smooth symmetry at every point.  In particular as $\delta\rightarrow 0$ it is not only a $\frac{1}{2}$-Riemannian submersion, but becoming a $1$-Riemannian submersion and thus the local GH property is preserved.  We make this precise as follows:

Assume the claim is false for some $\epsilon>0$.  Then we can find $\delta_{i}\rightarrow 0$ and $(M_{0,i},g_{0,i})$, $(M_{1,i},g_{1,i})$ which are $\{A\}^{k}_{0}$-regular with $f_{i}:M_{0,i,\iota/2}\rightarrow M_{1,i,\iota/2}$ $(1,\delta_{i})$ $G$-equivariant GH approximations, but such that $f_{i,\epsilon}|_{M_{0,i,\iota}}$ are not $\{B\}^{k+2,\alpha}_{0}$-smooth $G$-$(1,\epsilon)$-GH approximations.  Now the smoothness conditions are certainly satisfied for the $f_{i,\epsilon}$, and if it held at each point $x\in M_{0,\iota}$ that $1-\epsilon \leq df[X]\leq 1+\epsilon$ then one could check that the $f_{i,\epsilon}$ are $(1,\epsilon)$ GH approximations.  So there must be points $x_{i}\in M_{0,i,\iota}$ such that the $f_{i,\epsilon}$ are not $\epsilon$-Riemannian submersions in either the $C^{1}$ or in the $C^{2}$ sense.  Now let $r=\iota/16$, $y_{i}=f(x_{i})$, $Y_{i}=B_{2r}(y_{i})$ and $X_{i}=f^{-1}_{i,\epsilon}(Y_{i})$.  Let $\tilde X_{i}$ be the universal cover of $X_{i}$ with $\tilde x_{i}$ a lift of $x_{i}$, $\Lambda_{i}$ its fundamental group and $\tilde f_{i,\epsilon}:\tilde X_{i}\rightarrow Y_{i}$ the lift of $f_{i,\epsilon}$.  Now there is an injectivity radius bound in $Y_{i}$, and hence as \cite{CFG},\cite{NaTi} an injectivity radius bound in $\tilde X_{i}$.  By the usual compactness we can pass to a subsequence so that $(\tilde X_{i},g_{i},\Lambda_{i},\tilde x_{i})\stackrel{C^{k+1,\alpha}-eGH}{\rightarrow}(\tilde X_{\infty},g_{\infty},N,\tilde x_{\infty})$ and $(\tilde Y_{i},h_{i},y_{i}) \stackrel{C^{k+1,\alpha}}{\rightarrow} (\tilde Y_{\infty},h_{\infty},y_{\infty})$, where as in \cite{NaTi} $N$ is a finite extension of a nilpotent, $(\tilde X_{\infty},g_{\infty},\tilde x_{\infty})$ and $(\tilde Y_{\infty},h_{\infty},y_{\infty})$ have $(\{C\}^{k+1,\alpha}_{0},r_{h})$-harmonic bounded geometry at $\tilde x_{\infty}, y_{\infty}$, and $\tilde X_{\infty}/N\approx Y_{\infty}$.  Recalling that the $\tilde f_{i,\epsilon}$ have $C^{\infty}$ bounds with respect to a nearby metric (and \textit{apriori} $C^{k+2,\sigma}$ bounds $\forall \sigma<1$ with respect to the original metrics) we can pass to a subsequence to limit $\tilde f_{i,\epsilon}\rightarrow \tilde f_{\infty,\epsilon}:\tilde X_{\infty}\rightarrow Y_{\infty}$.  We know the diameter of the fibers of $f_{i,\epsilon}$ are on the order of $\delta_{i}$ and so tending to zero.  The fibers of $\tilde f_{i,\epsilon}$ are invariant under the $\Lambda_{i}$ action and by the last statement the orbit of any point by $\Lambda_{i}$ is on the order of $\delta_{i}$ dense in the corresponding fiber of $f_{i,\epsilon}$.  Hence the fibers of the limit function $\tilde f_{\infty,\epsilon}$ are equal to the $N$ orbits in $\tilde X_{\infty}$ and so $\tilde f_{\infty,\epsilon}$ is equal to the quotient map $\tilde X_{\infty}\stackrel{N}{\rightarrow} Y_{\infty}$.  But then $\tilde f_{\infty,\epsilon}$ is a Riemannian submersion, and since the convergence was in at least $C^{2}$ that means for sufficiently large $i$ $\tilde f_{i,\epsilon}$ is an $\epsilon$-Riemannian submersion at $\tilde x_{i}$, which of course is a contradiction and thus proves the theorem.
\end{proof}

From the above it is only a little more work to get the following smoothing lemma:

\begin{lemma}[Smoothing Lemma]\label{lem_smoothing}
Additionally in the last lemma, there are $\{C\}^{\infty}_{0}=\{C(n,A,\iota)\}^{\infty}_{0}$-regular $G$-equivariant metrics $\bar g_{0}$, $\bar g_{1}$ on $M_{0,\iota/2}$ and $M_{1,\iota/2}$, respectively, with $||g_{i}-\bar g_{i}||_{C^{k+1,\alpha}}\leq D=D(n,A,k,\alpha)$ such that $f_{\epsilon}:(M_{0,\iota/2},\bar g_{0})\rightarrow (M_{1,\iota/2},\bar g_{1})$ is a Riemannian submersion which is a $(\sqrt A^{0},\epsilon)$ GH approximation.
\end{lemma}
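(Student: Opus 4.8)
The plan is to start from the map $f_\epsilon$ produced by Theorem \ref{thm_smooth_GH} and absorb its "almost Riemannian submersion" defect into a perturbation of the two metrics, rather than modifying the map further. Recall that $f_\epsilon$ already has uniform $\{B\}^{k+2,\alpha}_0$ bounds, satisfies $1-\epsilon\le |df_\epsilon[X]|\le 1+\epsilon$ on horizontal vectors, and $|\nabla^2 f_\epsilon(X,X)|\le B^{(2)}\epsilon$; moreover the fibers have diameter $O(\epsilon)$ and $f_\epsilon$ is $G$-equivariant. The point is that a map with these properties is $C^{k+1,\alpha}$-close to being a genuine Riemannian submersion, so the obstruction to exactness is a small, controlled tensor.

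First I would recall the two-step metric replacement already used in the proof of Theorem \ref{thm_smooth_GH}: by \cite{A}, \cite{PWY} we may first replace $g_0,g_1$ by $\{C\}^\infty_0$-regular $G$-equivariant metrics $\tilde g_0,\tilde g_1$ with $\|g_i-\tilde g_i\|_{C^{k+1,\alpha}}$ a priori bounded (and $G$-invariance preserved by averaging over the compact group $G$). Working with these smooth metrics, $f_\epsilon$ remains a $\{B\}^\infty_0$-bounded smooth $(\sqrt{A^0},\epsilon)$-GH approximation. So it suffices to correct $\tilde g_1$ on the base side and $\tilde g_0$ on the total-space side so that $f_\epsilon$ becomes a Riemannian submersion, while keeping the $C^{k+1,\alpha}$ distance to the original metrics bounded by a constant $D=D(n,A,k,\alpha)$ independent of $\epsilon$.

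The correction itself proceeds in two stages. (i) \emph{Base metric.} Define $\bar g_1 \equiv \frac14\big((f_\epsilon)_* \tilde g_0^{\mathcal H}\big)$ suitably interpreted — more precisely, at $y\in M_1$ declare $\bar g_1(u,v)$ to be the $\tilde g_0$-inner product of the $\tilde g_0$-horizontal lifts of $u,v$ at any chosen point of $f_\epsilon^{-1}(y)$; because $f_\epsilon$ has $C^{k+2,\alpha}$ bounds and fibers of diameter $O(\epsilon)$, this is well-defined up to an $O(\epsilon)$ ambiguity, which one removes by averaging over the fiber (a compact set) against a smooth fiberwise density. Equivariance of $f_\epsilon$ makes $\bar g_1$ $G$-invariant. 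Since $|df_\epsilon[X]|=1+O(\epsilon)$ on unit horizontal vectors and $|\nabla^2 f_\epsilon(X,X)|=O(\epsilon)$, one gets $\|\bar g_1-\tilde g_1\|_{C^{k+1,\alpha}}=O(1)$ with the $C^0$ part in fact $O(\epsilon)$, and by construction $f_\epsilon:(M_0,\tilde g_0)\to(M_1,\bar g_1)$ is a submersion with \emph{isometric} horizontal spaces — i.e. an infinitesimally Riemannian submersion. (ii) \emph{Total-space metric.} It remains to adjust $\tilde g_0$ so that the horizontal distribution $\mathcal H$ really is the $\tilde g_0$-orthogonal complement of $\ker df_\epsilon$ and the horizontal inner product matches $f_\epsilon^*\bar g_1$; this is the standard fact that any submersion with a chosen complement to the vertical bundle can be made Riemannian by declaring that complement orthogonal and pulling back the base metric horizontally, keeping the vertical metric and the off-diagonal block as one pleases. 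Taking the vertical block to be $\tilde g_0|_{\mathcal V}$ and the mixed block zero gives $\bar g_0$; the $C^{k+1,\alpha}$ distance $\|\bar g_0-\tilde g_0\|$ is controlled by the $C^{k+1,\alpha}$ size of the deviation of $\mathcal H$ from $(\ker df_\epsilon)^{\perp_{\tilde g_0}}$ and of the horizontal block from $f_\epsilon^*\bar g_1$, both of which are $O(\epsilon)$ by construction, hence in particular bounded by $D(n,A,k,\alpha)$. Combining with the first step, $\|g_i-\bar g_i\|_{C^{k+1,\alpha}}\le D$, and $f_\epsilon:(M_{0,\iota/2},\bar g_0)\to(M_{1,\iota/2},\bar g_1)$ is a Riemannian submersion; it remains a $(\sqrt{A^0},\epsilon)$-GH approximation because distances change by at most $e^{O(\epsilon)}$ under the metric perturbation and $f_\epsilon$ was already one.

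The main obstacle I expect is the well-definedness and regularity of $\bar g_1$ in step (i): the naive pushforward of the horizontal metric depends on which point of the fiber one uses, and one must show that the fiberwise average has $C^{k+1,\alpha}$ bounds uniform in $\epsilon$ and that the residual fiber-dependence, which is only $O(\epsilon)$ in $C^0$, does not degrade the higher derivatives. This is exactly the kind of estimate that the sharp higher-order bounds of Theorem \ref{thm_smooth_GH} (in particular the $C^2$-almost-submersion property $|\nabla^2 f_\epsilon(X,X)|\le B^{(2)}\epsilon$) are designed to supply, so the argument should go through, but it requires carefully tracking how derivatives of the averaging operation interact with the curvature bounds on $\tilde g_0,\tilde g_1$.
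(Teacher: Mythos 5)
Your overall strategy is sound — absorb the almost-submersion defect into a metric perturbation rather than modify $f_\epsilon$ further — and your argument would go through with care. But you take a noticeably more complicated route than the paper, and the extra complication creates the very difficulty you flag as "the main obstacle."

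The paper's proof keeps the base metric fixed: it sets $\bar g_1 = \tilde g_1$ and only modifies $\tilde g_0$, by replacing its restriction to the horizontal bundle $\mathcal H = (\ker df_\epsilon)^{\perp_{\tilde g_0}}$ with the pullback $f_\epsilon^* \tilde g_1$, leaving the vertical block and the (vanishing) mixed block alone. Since $f_\epsilon$ is already an $\epsilon$-Riemannian submersion with $C^\infty$ bounds relative to $\tilde g_0, \tilde g_1$, the replaced horizontal block differs from the old one by a controlled amount, so $\bar g_0$ is $\{C\}_0^\infty$-regular and at bounded $C^{k+1,\alpha}$ distance from $g_0$. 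No pushforward, no fiberwise averaging, no well-definedness question. You instead push forward $\tilde g_0^{\mathcal H}$ and average over fibers to define a new $\bar g_1$, and only then correct $\bar g_0$; the averaging step forces you to verify that its $C^{k+1,\alpha}$ bounds are uniform in $\epsilon$, a nontrivial estimate that the paper simply never needs. (As a side note, the factor $\frac14$ in your definition of $\bar g_1$ appears to be spurious, and in step (ii) the distribution $\mathcal H$ is already the $\tilde g_0$-orthogonal complement of $\ker df_\epsilon$ by definition, so the phrase "adjust $\tilde g_0$ so that $\mathcal H$ really is the orthogonal complement" is vacuous.)

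One more small point: for the final GH-approximation claim, the paper argues directly from fiber-diameter control — $\mathrm{diam}_{g_0}(f_\epsilon^{-1}(y)) \le c\,\delta$ persists under the $C^0$-small perturbation to $\bar g_0$, and this together with $f_\epsilon$ being an exact Riemannian submersion gives the $(\sqrt{A^0},\epsilon)$-GH property. Your assertion that "distances change by at most $e^{O(\epsilon)}$" is correct only because the $C^0$ component of the total perturbation $g_i \to \bar g_i$ is small, even though the full $C^{k+1,\alpha}$ norm is only $O(1)$; you should separate these two scales explicitly rather than leave the reader to infer it. Both arguments work, but the paper's is self-contained and does not rely on tracking the $C^0$ versus higher-order size of the perturbation.
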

\begin{remark}
If the $M_{i}$ do not contain boundary then the constants $\{C\}^{\infty}_{0}$ can be taken to not depend on $\iota$.
\end{remark}
\begin{proof}
Recall in the last proof we began by perturbing $g_{0},g_{1}$ to metrics $\tilde g_{0},\tilde g_{1}$ with all the required properties except for $f_{\epsilon}$ being a Riemannian submersion and GH approximation.  However since $f_{\epsilon}$ is a submersion we can define new metrics by $\bar g_{1}=\tilde g_{1}$ and $\bar g_{0}$ by changing $\tilde g_{0}$ along the horizontal components to make it into a Riemannian submersion.  Since $f_{\epsilon}$ is an $\epsilon$-Riemannian submersion with $C^{\infty}$ bounds with respect to $\tilde g_{0},\tilde g_{1}$ we have that $\bar g_{0}$ and $\tilde g_{0}$ are also at an \textit{apriori} $C^{\infty}$ bounded distance.  We just need to check that $f_{\epsilon}$ now defines a $(\sqrt A^{0},\epsilon)$ GH approximation.  For this notice that with respect to the original metric $g_{0}$ we have $diam_{g_{0}}(f^{-1}(y))\leq c(n,A^{0})\delta$.  But $g_{0}$ is uniformly close to $\bar g_{0}$, and so after altering $c$ we still have $diam_{\bar g_{0}}(f^{-1}(y))\leq c(n,A^{0})\delta$.  This combined with $f_{\epsilon}$ being a Riemannian submersion proves the claim.
\end{proof}

This lemma will in effect allow us to replace collapsing manifolds with partial regularity by nearby collapsing manifolds with full regularity.  This is convenient for purely technical reasons and with it we can more or less immediately take the constructions from \cite{CFG}, which require more regularity assumptions than one would like, and easily get the corresponding results without the regularity constraints.  In particular (see Appendix \ref{sec_inf_bund} for definitions):

\begin{theorem}\label{thm_nilbundleexist}
Let $(M_{0}^{n},g_{0})$ and $(M_{1}^{m},g_{1})$ be Riemannian $G$-manifolds which are $\{A\}^{k}_{0}$-regular, possibly with boundary, with $inj^{B}(M_{1})\geq \iota >0$, $\iota\leq \sqrt{A^{0}}$ and $n\geq m$.  Then for every $\epsilon>0$ there exists $\delta>0$ such that if $f:M_{0,\iota/4}\rightarrow M_{1,\iota/4}$ is a $(\sqrt A^{0},\delta)$ $G$-equivariant GH approximation, then there exists over $M_{0,\iota/2}$ and $M_{1,\iota/2}$ a $\{B\}^{k+2,\alpha}_{0} =\{B(n,A,\iota,\alpha)\}^{k+2,\alpha}_{0}$-regular $(\sqrt A^{0},\epsilon)$-$G$-equivariant infranil bundle structure $(f_{\epsilon},\nabla^{f_{\epsilon}})$.
\end{theorem}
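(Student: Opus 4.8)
The plan is to build the infranil bundle structure on $M_{0,\iota/2}$ and $M_{1,\iota/2}$ by first reducing to the fully regular situation via the Smoothing Lemma, then invoking the constructions of \cite{CFG} which are now applicable, and finally transporting the result back. Concretely, given $\epsilon>0$ we first choose an auxiliary $\epsilon'=\epsilon'(n,A,\iota,\alpha,\epsilon)>0$, whose precise size is dictated by the continuity of the constructions in Appendix \ref{sec_inf_bund} under $C^{k+1,\alpha}$-perturbations of the ambient metrics; then we apply Theorem \ref{thm_smooth_GH} with $\epsilon'$ in place of $\epsilon$ to obtain $\delta=\delta(n,A,\iota,\alpha,G,\epsilon')$ and, for any $(\sqrt{A^0},\delta)$ $G$-equivariant GH approximation $f:M_{0,\iota/4}\to M_{1,\iota/4}$, a smooth $\{B\}^{k+2,\alpha}_{0}$-bounded $(\sqrt{A^0},\epsilon')$ $G$-eGH approximation $f_{\epsilon'}:M_{0,\iota/4\cdot 2}\to M_{1}$, with the image trapped between $M_{1,2\iota/4}$ and $M_{1,\iota/4/2}$. (The index bookkeeping is just the usual $\iota\mapsto\iota/2$ shrinking and presents no difficulty; one adjusts $\iota$ at the outset.)

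**Next I would** apply the Smoothing Lemma \ref{lem_smoothing} to replace $g_0,g_1$ by $\{C\}^\infty_0$-regular $G$-equivariant metrics $\bar g_0,\bar g_1$, with $\|g_i-\bar g_i\|_{C^{k+1,\alpha}}\le D(n,A,k,\alpha)$, so that $f_{\epsilon'}:(M_{0,\iota/2},\bar g_0)\to(M_{1,\iota/2},\bar g_1)$ is an honest Riemannian submersion which is still a $(\sqrt{A^0},\epsilon')$ GH approximation. At this point the hypotheses of the Cheeger--Fukaya--Gromov construction are met in full: we have a genuine almost-Riemannian-submersion (in fact a Riemannian submersion with $C^{2}$-small second fundamental form, hence an $\epsilon'$-Riemannian submersion in the $C^2$ sense) between smooth manifolds with fully bounded geometry (all derivatives of curvature controlled), fibers of diameter $O(\delta)\to 0$, and the necessary injectivity radius lower bound on the base. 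Appendix \ref{sec_inf_bund} (following \cite{CFG}, Section 4 and the equivariant version) then produces over $(M_{0,\iota/2},\bar g_0)$, $(M_{1,\iota/2},\bar g_1)$ an $(\sqrt{A^0},\epsilon'')$-$G$-equivariant infranil bundle structure $(f_{\epsilon'},\nabla^{f_{\epsilon'}})$ whose regularity bounds depend only on $n,A,\iota,\alpha,G$ — here $\epsilon''\to 0$ as $\epsilon'\to 0$, and we have arranged $\epsilon'$ small enough that $\epsilon''<\epsilon/2$. The $G$-equivariance is preserved throughout because every step (the center-of-mass smoothing in Theorem \ref{thm_smooth_GH}, the horizontal modification of the metric in Lemma \ref{lem_smoothing}, and the CFG construction itself) respects the group action.

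**The last step** is to transfer the structure from the $\bar g_i$-geometry back to the original $g_i$-geometry. Since $\|g_i-\bar g_i\|_{C^{k+1,\alpha}}\le D$, every metric quantity entering the definition of an $(\sqrt{A^0},\epsilon)$-regular infranil bundle structure — the norms of the affine connection $\nabla^{f_{\epsilon'}}$, the curvature of the fibers, the deviation of $f_{\epsilon'}$ from being a Riemannian submersion, the bounds on the weak nilpotent coordinate systems — changes only by an amount controlled a priori by $D$ and, crucially, the \emph{smallness} statements ($\epsilon$-closeness of $|df[X]|$ to $1$, $\epsilon$-smallness of the second fundamental form) degrade by at most a multiplicative factor $e^{O(D)}$ or an additive $O(\epsilon'')$. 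Absorbing these into the choice of $\epsilon'$ (i.e.\ demanding $e^{O(D)}(\epsilon''+\epsilon'')<\epsilon$) yields the claimed $(\sqrt{A^0},\epsilon)$-$G$-equivariant infranil bundle structure with $\{B\}^{k+2,\alpha}_{0}=\{B(n,A,\iota,\alpha)\}^{k+2,\alpha}_{0}$-regularity over the original geometries.

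**The main obstacle** I anticipate is not any single hard estimate but rather the bookkeeping of the continuity of the \cite{CFG} construction under metric perturbation: one must verify that the infranil bundle structure and its governing estimates (the local $N$-actions, the fibered coordinates, the connection $\nabla^{f}$) depend \emph{continuously and with a priori modulus} on the input data, so that passing from $\bar g_i$ to $g_i$ really does preserve both the bounds and the smallness with only a controlled loss. This is implicitly contained in Appendix \ref{sec_inf_bund} but deserves an explicit remark; alternatively, one can sidestep it entirely by running the CFG construction with respect to $\bar g_i$ and then noting that an infranil bundle structure that is $(\sqrt{A^0},\epsilon'')$-regular for $\bar g_i$ is, tautologically, $(\sqrt{A^0},\epsilon)$-regular for $g_i$ for $\epsilon$ slightly larger, since the two metrics are uniformly bi-Lipschitz and $C^{k+1,\alpha}$-close. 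A secondary (purely cosmetic) point is harmonizing the nested neighborhoods $M_{0,\iota/4}\supseteq M_{0,\iota/2}$ across the three invocations, which is handled by shrinking $\iota$ by a fixed factor before one begins.
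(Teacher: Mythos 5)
Your proposal follows the same strategy as the paper's proof: invoke Theorem \ref{thm_smooth_GH} and the Smoothing Lemma \ref{lem_smoothing} to obtain $f_{\epsilon}$ together with $\{C\}^{\infty}_{0}$-regular nearby metrics $\bar g_{0},\bar g_{1}$, run the CFG construction of $\nabla^{f_{\epsilon}}$ in the fully regular setting, and then transfer back via the uniform $C^{k+1,\alpha}$-closeness of $\bar g_i$ to $g_i$. The extra bookkeeping about an auxiliary $\epsilon'$ and the continuity of the CFG construction under perturbation is exactly the content of the paper's closing sentence, just spelled out more explicitly.
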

\begin{proof}
Let $f_{\epsilon}$, $\bar g_{0}$ and $\bar g_{1}$ as guaranteed by the last lemmas.  Then the existence of $\{C\}^{\infty}_{0}$-regular (wrt $\bar g_{0}$ and $\bar g_{1}$) $\nabla^{f_{\epsilon}}$ follows immediately from the construction in \cite{CFG}.  Because $\bar g_{i}$ are $C^{k+1,\alpha}$ bounded distance from $g_{i}$ this gives the result.
\end{proof}

From this we get one more technical tool:

\begin{theorem}\label{thm_reg_inf_atl}
As in the last theorem if we let $r\leq r(n,A^{0},\iota)$ and $\{U_{\beta}\} = \{B_{r}(y_{\beta})\}\subseteq M_{1,\iota/2}$ be a covering of $M_{1,\iota}$, then there exists a simply connected nilpotent $N$ and a $\{B\}^{k+2,\alpha}_{0} =\{B(n,A,r,\alpha)\}^{k+2,\alpha}_{0}$-regular $G$-infranil atlas $\{U_{\beta}\times N,\Lambda,\varphi_{\beta}\}$ which is compatible with $(f_{\epsilon},\nabla^{f_{\epsilon}})$.
\end{theorem}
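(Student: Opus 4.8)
The plan is to feed the \emph{regular} infranil bundle structure $(f_{\epsilon},\nabla^{f_{\epsilon}})$ over $M_{0,\iota/2}$, $M_{1,\iota/2}$ supplied by Theorem \ref{thm_nilbundleexist} into the local trivialization construction of \cite{CFG} (see Appendix \ref{sec_inf_bund}) and then to bookkeep regularity. I would first fix the data that is global. Assuming $M_{0}$ connected (otherwise argue componentwise), the infranil bundle $f_{\epsilon}$ has a single fiber type, a fixed infranil manifold $N/\Lambda$ with $N$ simply connected nilpotent and $\Lambda\leq N\rtimes Aut(N)$; fix this pair $(N,\Lambda)$ once and for all, together with a normalized right invariant metric $h$ on $N$ in the sense of Section \ref{sec_notation}. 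Then choose $r\leq r(n,A^{0},\iota)$ small enough that each ball $U_{\beta}=B_{r}(y_{\beta})\subseteq M_{1,\iota/2}$ is geodesically convex, hence contractible, and lies inside a single local trivialization of the infranil bundle; by hypothesis the $\{U_{\beta}\}$ cover $M_{1,\iota}$.

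The core step is producing a chart over each $U_{\beta}$ from $(f_{\epsilon},\nabla^{f_{\epsilon}})$. The fiberwise affine connection $\nabla^{f_{\epsilon}}$ has holonomy among the affine transformations of $N/\Lambda$, with finite automorphism part, so over the contractible base $U_{\beta}$ its parallel transport is path independent and identifies $f_{\epsilon}^{-1}(U_{\beta})$ with $U_{\beta}\times(N/\Lambda)$ as infranil bundles with affine connection, the connection becoming the product of the trivial one on $U_{\beta}$ with the canonical flat affine connection on $N/\Lambda$. Composing this trivialization with the universal covering $N\to N/\Lambda$ of the fiber and with a horizontal lift for a chosen complement of the vertical distribution of $f_{\epsilon}$ yields $\varphi_{\beta}:U_{\beta}\times N\to M_{0}$ with $f_{\epsilon}\circ\varphi_{\beta}$ equal to the projection to $U_{\beta}$, with covering group $\pi_{\varphi_{\beta}}=\Lambda$, and carrying the trivial flat connection on $U_{\beta}\times N$ to $\nabla^{f_{\epsilon}}$; this is precisely the compatibility with $(f_{\epsilon},\nabla^{f_{\epsilon}})$ demanded of an infranil atlas. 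On an overlap $U_{\beta}\cap U_{\beta'}\neq\emptyset$ both $\varphi_{\beta}$ and $\varphi_{\beta'}$ pull $\nabla^{f_{\epsilon}}$ back to the trivial connection, so $\varphi_{\beta}^{-1}\circ\varphi_{\beta'}$ preserves the flat affine structure and therefore has the form $(x,n)\mapsto(x,a_{\beta\beta'}(x)\cdot n)$ with $a_{\beta\beta'}(x)$ an affine transformation of $N$ depending smoothly on $x$, which is the cocycle condition of a $G$-infranil atlas. Finally, since $f_{\epsilon}$ and $\nabla^{f_{\epsilon}}$ are $G$-equivariant, after enlarging $\{U_{\beta}\}$ to a $G$-invariant family (using compactness of $G$) the trivializations may be chosen $G$-compatibly, so the atlas is a $G$-infranil atlas.

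For the regularity claim I would argue exactly as in the proof of Theorem \ref{thm_nilbundleexist}. Passing first to the regularized metrics $\bar g_{0},\bar g_{1}$ of Lemma \ref{lem_smoothing}, which are $C^{k+1,\alpha}$-close to $g_{0},g_{1}$ and with respect to which $f_{\epsilon}$ is an honest Riemannian submersion, everything used to assemble $\varphi_{\beta}$ --- namely $f_{\epsilon}$, the horizontal distribution, and $\nabla^{f_{\epsilon}}$ --- is smooth with uniform bounds, and the horizontal lifts and fiberwise parallel transports are solutions of linear ODEs with such coefficients, so $\varphi_{\beta}$ has \textit{apriori} $C^{\infty}$ bounds with respect to $\bar g_{0}$ and hence $\{B(n,A,r,\alpha)\}^{k+2,\alpha}_{0}$ bounds with respect to $g_{0}$. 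Since moreover $f_{\epsilon}$ is a $(\sqrt{A^{0}},\epsilon)$ almost Riemannian submersion in the $C^{2}$ sense and the bundle structure $(f_{\epsilon},\nabla^{f_{\epsilon}})$ is $(\sqrt{A^{0}},\epsilon)$-$G$-equivariant, the pulled-back metric $\varphi_{\beta}^{*}g_{0}$ on $U_{\beta}\times N$ is comparable to, and has derivatives controlled against, the product metric $\delta_{ij}+h_{ij}$ in the weighted norm of Section \ref{sec_notation} --- here $\varphi_{\beta}$ is acting as the unwrapping of the collapsed fibers, so the pulled-back geometry is genuinely non-collapsed. Together these bounds are what make $\{U_{\beta}\times N,\Lambda,\varphi_{\beta}\}$ a $\{B\}^{k+2,\alpha}_{0}$-regular $G$-infranil atlas compatible with $(f_{\epsilon},\nabla^{f_{\epsilon}})$, as claimed.

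The parts that really require verification, rather than quotation from \cite{CFG}, are twofold, and I expect the first to be the main obstacle. First, that one fixed $(N,\Lambda)$ and one fixed affine structure serve every chart, forcing the covering group to be exactly $\Lambda$ in all charts and the transition data $a_{\beta\beta'}$ to be genuinely affine; this rests entirely on the affineness of the holonomy of $\nabla^{f_{\epsilon}}$, which is built into the infranil bundle structure of Theorem \ref{thm_nilbundleexist} but must still be threaded through all the local trivializations in a $G$-equivariant way. Second, that none of the ODE steps producing $\varphi_{\beta}$ costs more than the single derivative one gains for free, which is exactly what delivers the full $\{B\}^{k+2,\alpha}_{0}$ regularity claimed rather than only a $\{B\}^{k+1,\alpha}_{0}$-type bound; this is where the smoothing device of Lemma \ref{lem_smoothing} is essential, allowing us to run the \cite{CFG} construction with sharp regularity and without the superfluous regularity hypotheses on the $M_{i}$.
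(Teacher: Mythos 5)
The paper's own proof is a two-sentence citation: pass to the $\{C\}^{\infty}_{0}$-regular metrics $\bar g_{0},\bar g_{1}$ of Lemma \ref{lem_smoothing}, apply Theorem \ref{thm_infra_atlas2} to produce a $C^{\infty}$-bounded atlas with respect to $\bar g_{i}$, and then observe that the $C^{k+1,\alpha}$ proximity of $\bar g_{i}$ to $g_{i}$ converts this into the claimed $\{B\}^{k+2,\alpha}_{0}$ bounds. Your proposal instead re-derives the atlas construction from scratch (normal trivializations over contractible balls, horizontal lifts, parallel transport of the fiber connection, overlap cocycles), and your regularity bookkeeping via Lemma \ref{lem_smoothing} matches the paper's. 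That part is fine.

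However, there is a genuine gap. What you construct has transition functions $a_{\beta\beta'}$ that are merely \emph{affine} transformations of $N/\Lambda$, i.e.\ elements of $Aff(N/\Lambda)=\frac{N}{Cent(N)\cap\Lambda}\rtimes Aut(\Lambda)$. By the paper's own definitions (Appendix \ref{sec_inf_bund}) that only produces an \emph{unreduced} infranil atlas, which is Theorem \ref{thm_infra_atlas}. The theorem you are asked to prove demands an \emph{infranil atlas}, whose transitions must lie in the smaller group $C_{Aff}=\frac{Cent(N)}{Cent(N)\cap\Lambda}\rtimes Aut(\Lambda)$; this stricter requirement is what makes the adjoint bundle $V^{ad}$, its lattice section, and the canonical flat connection in Sections \ref{sec_approx_V_str}--\ref{sec_flat_conn} well defined globally. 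Passing from $Aff(N/\Lambda)$ to $C_{Aff}$ is exactly the content of Theorem \ref{thm_infra_atlas2}: one uses that $Aff(N/\Lambda)/C_{Aff}\approx N'$ is a contractible nilpotent group to produce, via a center-of-mass averaging with a controlled partition of unity (and a second averaging over $G$ for equivariance), a global section of the bundle $P_{Aff}/C_{Aff}\rightarrow M_{1}$ with \textit{apriori} $\{C\}^{\infty}_{0}$ bounds. Your argument never performs this reduction, and it does not come for free from the parallel-transport trivializations, which only see the affine holonomy. As a secondary issue, the claim that ``its parallel transport is path independent'' over a contractible base conflates the fiberwise flat affine connection $\nabla^{f_{\epsilon}}$ (which is only a $\mathcal{V}$-connection along fibers, not a connection on the bundle $f_{\epsilon}$) with a bundle connection; the actual construction in the Appendix proceeds via the normal exponential map together with Malcev rigidity, not by curvature-free parallel transport of $\nabla^{f_{\epsilon}}$ across the base. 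With the reduction to $C_{Aff}$ supplied (and the parallel-transport language replaced by the normal-exponential/Malcev argument), your regularity argument would close the proof.
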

\begin{proof}
As in Theorem \ref{thm_infra_atlas2} we have that with respect to the metrics $\bar g_{0}$ and $\bar g_{1}$ there is such an atlas with $C^{\infty}$ bounds.  As usual because $\bar g_{i}$ are $C^{k+1,\alpha}$ bounded distance from $g_{i}$ this gives the result.
\end{proof}

\section{Construction of Frame Space}\label{sec_framespace}

In this section consider a sequence of collapsing Riemannian manifolds $(M^{n}_{i},g_{i})\stackrel{GH}{\rightarrow} (X,d)$ where the $(M_{i},g_{i})$ are $\{A\}^{k}_{0}$-regular.  We wish to construct what will be the frame space $Y_{\iota}\rightarrow X_{\iota}$ in Theorem \ref{thm_nt2_main1}.  With the exception of a regularity property the first part of the construction goes back to \cite{F1}.  The second part will use the results of the previous sections.  To begin with consider the $O(n)$ frame bundles $FM_{i}$ above each $M_{i}$.  If we fix a standard bi-invariant metric on $O(n)$ and the Levi-Civita connection $w$ on $FM_{i}$ to define a horizontal distribution then there is a unique $O(n)$-invariant metric $g^{FM,w}_{i}$ on $FM_{i}$ such that the quotient map becomes a Riemannian submersion, $w$ defines the perpendicular to the vertical in $FM_{i}$ and the restriction to each fiber is isometric $O(n)$.  Unfortunately the metrics $g^{FM,w}_{i}$ can have worse regularity properties than the metrics $g_{i}$ down below.  A simple computation (see \cite{B}) shows that this is because the curvature upstairs on $g^{FM,w}_{i}$ depends not only on the curvature down below and on $O(n)$, but depends on both the curvature of $w$ and on the covariant derivative of the curvature of $w$.  If $w$ is the Levi-Civita connection the end effect is that the curvature upstairs depends on the curvature of $g_{i}$ and on the covariant derivative of the curvature of $g_{i}$.  To remedy this we perturb our choice of connection slightly.

\begin{lemma}\label{lem_frame}
Let $(M^{n},g)$ be a $\{A\}^{k}_{0}$-regular Riemannian manifold, $\iota>0$ and $FM_{\iota}$ be the frame bundle above $M_{\iota}$.  Then there exists a metric $g^{FM}$ on $FM_{\iota}$ such that $(FM_{\iota},g^{F})$ is a $\{B\}^{k}_{0}=\{B(n,A,\iota)\}^{k}_{0}$-regular space, each fiber is isometric to the standard $O(n)$ and $(FM_{\iota}, g^{FM})/O(n)$ is isometric to $(M_{\iota},g)$.
\end{lemma}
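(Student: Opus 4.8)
The plan is to build $g^{FM}$ by the standard "connection metric" recipe, but with a carefully regularized connection $w$ in place of the Levi-Civita connection. As observed in the paragraph preceding the lemma, the obstruction to inheriting the regularity of $(M,g)$ is that the curvature of the connection metric $g^{FM,w}$ depends on $\nabla^{w}(\mathrm{curv}(w))$, so if $w$ is Levi-Civita this costs one derivative of $Rm_{g}$. The remedy is to replace $w$ by a connection whose curvature is itself $\{A'\}^{k}_{0}$-bounded together with \emph{all} its covariant derivatives through order $k+1$ — intuitively, a "mollified" connection. First I would cover $M_{\iota}$ by the weak harmonic coordinate charts of Lemma \ref{lem_weak_harm_coor} (at scale $r_{h}=r_{h}(n,A^{0},\alpha)$), in which $g$ is $(\{B\}^{1,\alpha}_{0},r_{h})$-bounded and hence, using interior Schauder theory for the harmonic-coordinate equation together with the $\{A\}^{k}_{0}$-regularity, $(\{B\}^{k+1,\alpha}_{0},r_{h})$-bounded. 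In these charts the Levi-Civita connection form has coefficients controlled in $C^{k,\alpha}$; I then mollify it at the fixed scale $r_{h}/C(n)$ using a partition of unity subordinate to the cover, obtaining a global metric connection $w$ on $FM_{\iota}$ (compatible with $g$, so the $O(n)$-structure is preserved) whose connection and curvature forms, together with covariant derivatives through order $k+1$, are bounded by $\{C(n,A,\iota)\}$.

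With such a $w$ in hand I would define $g^{FM}$ to be the unique $O(n)$-invariant metric on $FM_{\iota}$ for which: the projection $\pi:FM_{\iota}\to M_{\iota}$ is a Riemannian submersion onto $(M_{\iota},g)$, the horizontal distribution is the one determined by $w$, and the restriction to each fiber is the fixed bi-invariant metric on $O(n)$. Concretely, in a local trivialization $\pi^{-1}(U)\cong U\times O(n)$ with $w$-horizontal lift, one has $g^{FM}=\pi^{*}g\oplus g^{O(n)}$ in the split frame $\{X^{h},\,$vertical$\}$; the last two defining properties in the statement then hold by construction. The remaining claim is the regularity: $(FM_{\iota},g^{FM})$ is $\{B(n,A,\iota)\}^{k}_{0}$-regular. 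For this I invoke O'Neill-type formulas (see \cite{B}) expressing $Rm_{g^{FM}}$ in terms of $Rm_{g}$ pulled back, the curvature $Rm_{g^{O(n)}}$ of the fiber (a fixed constant), the curvature form $\Omega^{w}$ of $w$, and one covariant derivative $\nabla\Omega^{w}$; iterating, $\nabla^{(j)}Rm_{g^{FM}}$ is a universal polynomial in $\pi^{*}\nabla^{(\le j)}Rm_{g}$, the fixed fiber data, and $\nabla^{(\le j+1)}\Omega^{w}$. All of these are now bounded by the choice of mollified $w$ together with the $\{A\}^{k}_{0}$-regularity of $g$, giving $|\nabla^{(j)}Rm_{g^{FM}}|\le B^{j}(n,A,\iota)$ for $0\le j\le k$; the required ball-completeness statement follows since $\pi$ is a proper Riemannian submersion with compact fibers and $M_{\iota}$ has the appropriate ball-completeness from regularity.

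The main obstacle is the second step — producing the mollified connection $w$ with curvature \emph{and} its covariant derivatives controlled through order $k+1$ using only $\{A\}^{k}_{0}$-regularity of $g$ (i.e.\ gaining back the derivative that the naive construction loses). The point is that mollification of the connection \emph{one-form} at a fixed scale improves the regularity of the \emph{form} by smoothing while only perturbing the underlying geometric connection by a controlled amount; since the curvature is a first-order expression in the form, a form bounded in $C^{k+1,\alpha}$ yields a curvature bounded in $C^{k,\alpha}$ and hence covariant derivatives of curvature through order $k$ — which, combined with the extra derivative one is allowed to spend on $\Omega^{w}$ in the O'Neill formulas, is exactly enough. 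One must check that the mollification can be done globally and $O(n)$-equivariantly (it is, since metric connections form an affine space over $\Omega^{1}(M,\mathfrak{so}(TM))$, on which convex combinations and mollification act linearly) and that the perturbation is small enough that the resulting $w$ is still close to Levi-Civita — this last point is not needed for the statement as written, only the bounds are, so it can be omitted. Everything else is a routine, if notationally heavy, application of the submersion curvature identities and interior elliptic estimates in harmonic coordinates.
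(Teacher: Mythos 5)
Your overall plan is the same as the paper's: the O'Neill formulas for a connection metric cost one derivative of the connection's curvature, so replace the Levi--Civita connection by a more regular one and only then assemble the connection metric with $\pi^{*}g$ on horizontals and bi-invariant $O(n)$ on verticals. Where you and the paper part ways is in \emph{how} the regular connection is produced, and this is where your argument has a gap.

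The paper never attempts to regularize the connection directly. It smooths the base metric $g$ to a $\{C(n,A,\iota)\}^{\infty}_{0}$-regular metric $g_{\iota}$ using the global, PDE-based smoothing of \cite{A},\cite{PWY}, takes $w=w_{LC}(g_{\iota})$ as the horizontal distribution for the frame bundle construction, builds the connection metric for $g_{\iota}$, and only at the last step swaps the horizontal part back to $\pi^{*}g$. Because the smoothing happens globally at the level of the metric, the connection form is $\{C\}^{\infty}_{0}$-regular as a covariant object and there are no local trivializations to glue; converting $\nabla_{g_{\iota}}^{(j)}Rm_{g_{\iota}}$ to $\nabla_{g}^{(j)}$-derivatives only introduces corrections controlled by $\Gamma_{g}-\Gamma_{g_{\iota}}$ and its first $k$ derivatives, which are bounded, so the required $\nabla^{(\le k+1)}\Omega^{w}$-bound holds.

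You instead propose to mollify the Levi--Civita connection $1$-form chart by chart, in local $g$-orthonormal frames on the weak harmonic charts, and then patch by a partition of unity. The gap is in the patching. The transition functions between the local $g$-orthonormal frames are $O(n)$-valued and have only the regularity of $g$ itself, namely $C^{k+1,\alpha}$; the Maurer--Cartan parts $h^{-1}dh$ of these frame changes are therefore only $C^{k,\alpha}$. These enter the glued connection through the tensor differences $\tilde w_{\gamma}-\tilde w_{\beta}$, which as a result are only $C^{k,\alpha}$, so both the glued connection $w=\sum\phi_{\gamma}\tilde w_{\gamma}$ and its curvature $\Omega^{w}$ carry only $C^{k,\alpha}$ control in harmonic coordinates. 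That is one derivative short: the O'Neill formulas make $Rm(g^{FM})$ depend on $\nabla\Omega^{w}$, so $\{B\}^{k}_{0}$-regularity of $g^{FM}$ requires $\nabla^{(\le k+1)}\Omega^{w}$ to be bounded, which a $C^{k,\alpha}$ curvature tensor does not supply. Your own final paragraph already hedges here ("a form bounded in $C^{k+1,\alpha}$ yields a curvature bounded in $C^{k,\alpha}$"), and that is precisely the count that falls short. The fix is exactly what the paper does: smooth $g$ globally rather than the connection locally (working on $F_{g_{\iota}}M_{\iota}$, which is $O(n)$-equivariantly diffeomorphic to $FM_{\iota}$, so the $g$-compatibility constraint you imposed is unnecessary), so that the connection is automatically fully regular covariantly and no partition-of-unity gluing of connection data is ever performed.
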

\begin{proof}
The construction is as in the last paragraph with a minor perturbation.  We first smooth $g$ on $M_{\iota}$ as in \cite{A},\cite{PWY} on the balls of size $\iota/2$ to a $\{C(n,A,\iota)\}^{\infty}_{0}$-regular metric $g_{\iota}$.  If we construct $g^{FM,w}_{\iota}$ on $FM_{\iota}$ as in the last paragraph we thus get a $\{D(n,A,\iota)\}^{\infty}_{0}$-regular metric with all the desired properties except of course that $(FM_{\iota}, g_{\iota}^{FM,w})/O(n)$ is isometric to $(M_{\iota},g_{\iota})$.  This is easily fixed by modifying $g_{\iota}^{FM,w}$ by pulling back $g$ in the horizontal directions to construct a new metric $g^{FM}$.  Now because $g_{\iota}^{FM,w}$ has full regularity the standard submersion equations \cite{B} easily check for us that $g^{FM}$ is $\{B\}^{k}_{0}=\{B(n,A,\iota)\}^{k}_{0}$-regular as claimed.
\end{proof}

So far we have only used a metric on a fixed manifold $M$ to construct a corresponding metric on $FM$.  Now we wish to expand a little and given a sequence $(M^{n}_{i},g_{i})$ of $\{A\}^{k}_{0}$-regular spaces associate to each a $(\{B\}^{k+1,\alpha}_{0},r) =(\{B(n,A,\iota ,\alpha)\}^{k+1,\alpha}_{0}, r(n,\iota))$ -bounded metric $\bar g^{FM}_{i}$ on $FM_{i,\iota}$ that is in an appropriate sense compatible with the collapsing sequence.  To begin with let $g^{FM}_{i}$ be the metric from the last lemma on the slightly bigger space $FM_{i,\iota/4}$.  After possibly passing to a subsequence we get that $(FM_{i,\iota/4},g^{FM}_{i},O(n)) \stackrel{eGH}{\rightarrow} (Y_{\iota/4}, g^{Y},O(n))$ and as in \cite{F1} we see that the Gromov Hausdorff limit $(Y_{\iota/4}, g^{Y})$ is a Riemannian manifold.  Further it was shown in \cite{F1} that if $(FM_{i,\iota/4},g^{FM}_{i},O(n))$ were $\{B\}^{\infty}_{0}$-regular then $(Y_{\iota/2}, g^{Y},O(n))$ would be $\{C\}^{\infty}_{0}=\{C(n,B,\iota)\}^{\infty}_{0}$-regular.  We are dealing with lesser regularity here, but the verbatim contradiction proof as in \cite{F1} shows that if $(FM_{i,\iota/4},g^{FM}_{i},O(n))$ are $\{B\}^{k}_{0}$-regular then $(Y_{3\iota/8}, g^{Y})$ is $(\{C\}^{k+1,\alpha}_{0},r) =(\{C(n,B,\iota,\alpha)\}^{k+1,\alpha}_{0},r(n,B^{0},\iota))$-bounded for each $0<\alpha<1$.

The next step is to perturb the metrics $g^{FM}_{i}$ slightly.  Because $(FM_{i,3\iota/8}, g^{FM}_{i},O(n)) \stackrel{eGH}{\rightarrow} (Y_{3\iota/8}, g^{Y},O(n))$ after passing to another subsequence we can use Theorem \ref{thm_smooth_GH} to find $\{D\}^{k+2,\alpha}_{1} = \{D(n,A,\iota,\alpha)\}^{k+2,\alpha}_{1}$-bounded $(r,\epsilon_{i})$-$O(n)$-Gromov Hausdorff Approximations $f_{i}:FM_{\iota/2,i}\rightarrow Y_{\iota/2}$ with $\epsilon_{i}\rightarrow 0$.  Combining with Theorem \ref{thm_nilbundleexist} we can construct a $\{D\}^{k+2,\alpha}_{0}$-regular (modify $D$'s possibly, but depends on same variables) $O(n)$ nil bundle $(f_{i},\nabla^{f_{i}})$.  Finally we can use Theorem \ref{thm_reg_inf_atl} to find a covering $U_{\beta}$ of $Y_{\iota}$ so that we have compatible $\{D\}^{k+2,\alpha}_{0}$-regular nil atlases $\{U_{\beta}\times N_{i},\Lambda_{i},\varphi_{\beta,i}\}$ of $FM_{i,\iota}$ over $Y_{\iota}$.  Now the $g^{FM}_{i}$ are not quite compatible with this nil atlas structure, which is to say they are not invariant under the local $N_{i}$ actions.  They are however invariant under the action of $\Lambda_{i}$, a cocompact subgroup whose orbits are $\epsilon_{i}$-dense.  Following \cite{CFG} we can thus average to get $(\{C\}^{k+1,\alpha}_{0},r) = (\{C(n,A,\iota,\alpha)\}^{k+1,\alpha}_{0},r(n,\iota))$-bounded metrics $\bar g^{FM}_{i}$ which are now compatible with the underlying nil atlas structures.  Notice that it may not be the case that $(FM_{i},\bar g^{FM}_{i})/O(n)$ is isometric to $(M_{i},g_{i})$, but because the perturbations are becoming increasingly small we do have $||g^{FM}_{i}-\bar g^{FM}_{i}||_{C^{k+1,\alpha}}\rightarrow 0$.

\section{Approximations of $V^{T},\rho,V^{ad}$}\label{sec_approx_V_str}

We are now nearly in a position to combine the results of the previous sections to prove Theorem \ref{thm_nt2_main1} and construct the bundles $V^{T}$ and $V^{ad}$.  The constructions of the last sections have produced equivariant nil atlases over the collapsing frame bundles, we need now to directly analyze the nil atlas structure itself.  Given our collapsing sequence $(FM_{i},\bar g^{FM}_{i},O(n))\stackrel{eGH}{\rightarrow}(Y,g^{Y},O(n))$ with the induced nil structures from the last section the goal will be to first construct a sequence of suitable equivariant vector bundles $V^{T}_{i}\rightarrow Y$ and $V^{ad}_{i}\rightarrow Y$.  These bundles will have properties very similar to our desired bundles $V^{T}$, $V^{ad}$ and in fact in the next section we will see that after passing to subsequences that the $V^{T}_{i}$ and $V^{ad}_{i}$ will limit in a suitably strong sense to our desired bundles $V^{T}$ and $V^{ad}$.

Because the constructions of this section are for each $FM_{i}$ individually and rely only on the underlying nil structures we will limit ourselves for now to considering a $\{A\}^{k+2,\alpha}_{0}$-bounded equivariant nil atlas $\{(U_{\beta}\times N,\Lambda,\varphi_{\beta},h^{coord}_{\beta})\}$ for the $\{A\}^{k+1,\alpha}_{0}$-bounded spaces $(M_{0},g_{0},G)$ over $(M_{1},g_{1},G)$.  We begin locally by associating for each $\beta$ two vector bundles over $U_{\beta}$, namely the bundle $U_{\beta}\times\eta$ of right invariant vertical vector fields in $U_{\beta}\times N$ and the bundle $T(U_{\beta}\times N)/N\approx TU_{\beta}\times\eta\rightarrow U_{\beta}$ of all right invariant vector fields in $U_{\beta}\times N$.  The identification $T(U_{\beta}\times N)/N\approx TU_{\beta}\times\eta$ is derived from the natural identification $T(U_{\beta}\times N)\approx TU_{\beta}\times TN$.  Then the map $\rho_{\beta}$ is just the projection map $\rho_{\beta}:TU_{\beta}\times\eta\rightarrow TU_{\beta}$.  Further the coordinate metrics $h^{coord}_{\beta}$ on $U_{\beta}\times N$ then descend to fiber metrics, also denoted as $h^{coord}_{\beta}$, on $TU_{\beta}\times\eta$ and $U_{\beta}\times\eta$.

To understand the global picture take two charts $U_{\beta}\cap U_{\gamma}\equiv U_{\beta\gamma}\neq\emptyset$.  Then we may consider the transition map $\varphi_{\beta\gamma}=\varphi_{\gamma}^{-1}\circ \varphi_{\beta}:U_{\beta\gamma}\times(N/\Lambda)\rightarrow U_{\beta\gamma}\times(N/\Lambda)$.  Because this map is an isometry with respect to the metric $g_{0}$ on $M_{0}$ and because the coordinates are $\{A\}^{k+2,\alpha}_{0}$-bounded we see that $\varphi_{\beta\gamma}$ is $\{A\}^{k+2,\alpha}_{0}$-bounded.  Further by construction its restriction to each nil fiber $N/\Lambda$ is an affine transformation and hence $\varphi_{\beta\gamma}$ maps right invariant vectors to right invariant vectors while preserving vertical vectors.  Therefore $\varphi_{\beta\gamma}$ induces well defined $\{A\}^{k+1,\alpha}_{0}$-bounded maps $\varphi^{T}_{\beta\gamma}:TU_{\beta\gamma}\times\eta \rightarrow TU_{\beta\gamma}\times\eta$ and $\varphi^{ad}_{\beta\gamma}:U_{\beta\gamma}\times\eta\rightarrow U_{\beta\gamma}\times\eta$.  Notice further that the induced mapping on the lie algebra $\eta$ at each point is by an element of $C_{Aff}\equiv \frac{Cent(N)}{Cent(N)\cap\Lambda}\rtimes Aut(\Lambda)$.  However $Cent(N)$ acts trivially on right invariant vector fields and so that the action is just by an element of $Aut(\Lambda)$.  Also we see that the maps $\rho_{\beta}$ commute with the coordinate transformations.  Thus the collections $(TU_{\beta}\times\eta,\varphi^{T}_{\beta\gamma})$ and $(U_{\beta}\times\eta,\varphi^{ad}_{\beta\gamma})$ construct $\{A\}^{k+1,\alpha}_{0}$-bounded vector bundles $V^{T}_{0},V^{ad}_{0}\rightarrow M_{1}$ with an induced vector bundle mapping $\rho_{0}:V^{T}\rightarrow TM_{1}$.  The structure group reduction of $V^{ad}_{0}$ to $Aut(\Lambda)$ constructs a canonical flat connection $\nabla^{flat}_{0}$ on $V^{ad}_{0}$.  Because the $G$ actions are by the central affine transformations $C_{Aff}$ on $N/\Lambda$ this makes them into an equivariant vector bundles with $\nabla^{flat}_{0}$ $G$-invariant.  Summarizing we get

\begin{theorem}\label{thm_vector_bundles}
Let $(M^{n}_{0},g_{0},G)$ and $(M_{1},g_{1},G)$ be $(\{A\}^{k+1,\alpha}_{0},r)$-regular and let $\{(U_{\beta}\times N,\Lambda,\varphi_{\beta},h^{coord}_{\beta})\}$ be a $\{B\}^{k+2,\alpha}_{0}$-bounded equivariant nil atlas with compatible nil bundle structure $(f,\nabla^{f})$.  Then there exist equivariant vector bundles $V^{T}_{0},V^{ad}_{0}\rightarrow M_{1}$ and an equivariant mapping $\rho_{0}:V^{T}_{0}\rightarrow TM_{1}$ with $\{C\}^{k+1,\alpha}_{0} =\{C(A,B,r,n)\}^{k+1,\alpha}_{0}$-bounded local trivializations $\{(TU_{\beta}\times\eta, \varphi_{\beta\gamma}^{T})\}$, $\{(U_{\beta}\times\eta, \varphi_{\beta\gamma}^{ad})\}$ such that
\begin{enumerate}
\item In the local trivializations $\rho_{0}:TU_{\beta}\times\eta\rightarrow TU_{\beta}$ is just the projection to the first factor.

\item If $ad_{0}:\eta\times\eta\rightarrow\eta$ is the pointwise adjoint map then $||ad||_{h^{coord}_{\beta}}\leq 1$ in each trivialization.

\item The transition maps $\varphi_{\beta\gamma}^{T}$ and $\varphi_{\beta\gamma}^{ad}$ act as lie algebra automorphisms on the $\eta$ factors which are independent of $x\in U_{\beta}\cap U_{\gamma}$.

\item There exists an equivariant diffeomorphism $\varphi^{T}_{0}:TM_{0}\rightarrow f^{*}V^{T}_{0}$.
\end{enumerate}
\end{theorem}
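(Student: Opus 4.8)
The plan is to carry out the explicit local-to-global construction outlined in the preceding paragraphs and then check the four assertions one at a time. The building blocks are fixed by the nil atlas: over each chart one declares $V^{T}_{0}|_{U_{\beta}} := TU_{\beta}\times\eta$ and $V^{ad}_{0}|_{U_{\beta}} := U_{\beta}\times\eta$, takes $\rho_{\beta}$ to be the projection to the first factor, and lets $h^{coord}_{\beta}$ be the fibre metric descended from the coordinate metric on $U_{\beta}\times N$. The transition data come from $\varphi_{\beta\gamma}=\varphi_{\gamma}^{-1}\circ\varphi_{\beta}$: its restriction to each nil fibre is affine, hence it carries right-invariant vector fields to right-invariant vector fields and preserves the vertical subbundle, so it induces $\varphi^{T}_{\beta\gamma}$ and $\varphi^{ad}_{\beta\gamma}$. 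With these definitions in hand, assertion 1 holds by construction, and assertion 2 is exactly the normalization built into the notion of a bounded weak nilpotent coordinate system, namely $||ad||_{h^{coord}_{\beta}}\leq 1$. The boundedness of the trivializations follows because $\varphi_{\beta\gamma}$ is a $g_{0}$-isometry expressed in $\{B\}^{k+2,\alpha}_{0}$-bounded coordinates, while $\varphi^{T}_{\beta\gamma}$ and $\varphi^{ad}_{\beta\gamma}$ are obtained from it by a first-order (derivative-type) operation, so they are $\{C\}^{k+1,\alpha}_{0}$-bounded with $\{C\}$ depending on $A,B,r,n$.

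For assertion 3, the linear part at a point of the fibrewise affine transformation $\varphi_{\beta\gamma}$ lies a priori in $C_{Aff}=\frac{Cent(N)}{Cent(N)\cap\Lambda}\rtimes Aut(\Lambda)$; since $Cent(N)$ acts trivially on right-invariant fields, what remains is an element of $Aut(\Lambda)\subseteq Aut(N)$, which acts on $\eta$ as a Lie algebra automorphism. The point that this automorphism is \emph{independent of} $x\in U_{\beta}\cap U_{\gamma}$ is where the compatibility of the atlas with the nil bundle structure $(f,\nabla^{f})$ is used: the structure of an (equivariant) infranil atlas forces the automorphism part of the transition maps to be locally constant, which I would extract from the properties recalled in Appendix \ref{sec_inf_bund} and \cite{CFG}. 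Granting this, the cocycle identities for $(TU_{\beta}\times\eta,\varphi^{T}_{\beta\gamma})$ and $(U_{\beta}\times\eta,\varphi^{ad}_{\beta\gamma})$ produce the global equivariant bundles $V^{T}_{0},V^{ad}_{0}\rightarrow M_{1}$ together with $\rho_{0}$; the reduction of $V^{ad}_{0}=\ker\rho_{0}$ to $Aut(\Lambda)$ supplies the canonical flat connection, and $G$-equivariance of all of this is inherited from the atlas, the $G$-action on $N/\Lambda$ lying in $C_{Aff}$ and therefore acting on the $\eta$-factors by Lie algebra automorphisms while preserving $\nabla^{flat}_{0}$.

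For assertion 4, over each $U_{\beta}$ I would define $\varphi^{T}_{0,\beta}$ on $f^{-1}(U_{\beta})\approx U_{\beta}\times(N/\Lambda)$ using the product splitting: a tangent vector at $(x,n\Lambda)$ splits into a part in $T_{x}U_{\beta}$ and a part in $T_{n\Lambda}(N/\Lambda)\cong T_{n}N$, and the right-invariant framing identifies $T_{n}N$ with $\eta$, giving an isomorphism onto $T_{x}U_{\beta}\times\eta=(f^{*}V^{T}_{0})_{(x,n\Lambda)}$. One checks this map descends from $U_{\beta}\times N$ to $U_{\beta}\times(N/\Lambda)$ because the $\Lambda$-action, through $N\rtimes Aut(N)$, acts compatibly on the right-invariant trivialization of $TN$ and on the $\eta$-factor of $f^{*}V^{T}_{0}$ (the translation and automorphism twists match on the two sides). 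Finally the local maps $\varphi^{T}_{0,\beta}$ agree on overlaps and glue to a global equivariant diffeomorphism $\varphi^{T}_{0}:TM_{0}\rightarrow f^{*}V^{T}_{0}$, precisely because under $\varphi^{T}_{0,\beta}$ and $\varphi^{T}_{0,\gamma}$ the transition map $d\varphi_{\beta\gamma}$ on $TM_{0}$ corresponds to the pullback of $\varphi^{T}_{\beta\gamma}$ — which is how $\varphi^{T}_{\beta\gamma}$ was defined in the first place. The principal obstacle in the whole argument is the $x$-independence in assertion 3: establishing that the Lie-algebra automorphisms in the transition maps are genuinely constant rather than varying within $Aut(\Lambda)$. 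Everything else is bookkeeping with the product structures, modulo care with the left/right-invariance conventions and the automorphism twist coming from $\Lambda<N\rtimes Aut(N)$.
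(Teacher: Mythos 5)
Your construction reproduces the paper's proof step by step: the same local models $TU_{\beta}\times\eta$ and $U_{\beta}\times\eta$, the same descent of $h^{coord}_{\beta}$, the same use of the affine transition maps $\varphi_{\beta\gamma}$ with $Cent(N)$ acting trivially on right-invariant fields, and the same bookkeeping for boundedness, equivariance, and the bundle map $\rho_{0}$. So this is essentially the paper's argument.

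The one place you leave a loose end is the one you flag yourself as the ``principal obstacle,'' and it closes more simply than you suggest. The $x$-independence of the Lie-algebra automorphism in assertion 3 does not really come from compatibility with $(f,\nabla^{f})$; it comes from the atlas being a (reduced) infranil atlas, so the fibrewise transition map $x\mapsto\varphi_{\beta\gamma}(x,\cdot)$ takes values in $C_{Aff}=\frac{Cent(N)}{Cent(N)\cap\Lambda}\rtimes Aut(\Lambda)$. Here $\frac{Cent(N)}{Cent(N)\cap\Lambda}$ is a connected torus and $Aut(\Lambda)$ is discrete (Malcev rigidity: automorphisms of the lattice extend uniquely to automorphisms of $N$, and those preserving $\Lambda$ form a discrete group), so the identity component of $C_{Aff}$ is the torus and $\pi_{0}(C_{Aff})\cong Aut(\Lambda)$. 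A continuous $C_{Aff}$-valued map on the connected set $U_{\beta\gamma}$ therefore has constant $Aut(\Lambda)$-component; after killing the torus part (which acts trivially on right-invariant fields), the induced map on $\eta$ is a single element of $Aut(\Lambda)$, independent of $x$. This is exactly the structure-group reduction to $Aut(\Lambda)$ that the paper then uses to produce the flat connection $\nabla^{flat}_{0}$. With that point filled in, your argument is complete and matches the paper.
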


If the metric $g_{0}$ above is compatible with the nil atlas then we may put geometries on our associated bundles.  The invariance of $g_{0}$ in the weak coordinates $U_{\beta}\times N$ by the right $N$ action constructs on $TU_{\beta}\times\eta$ and $U_{\beta}\times\eta$ local fiber metrics. The transition maps $\varphi_{\beta\gamma}:U_{\beta\gamma}\times(N/\Lambda) \rightarrow U_{\beta\gamma}\times(N/\Lambda)$ are isometries with respect to $g_{0}$ and so these local fiber metrics induce global fiber metrics $g^{T}_{0}$,$h^{ad}_{0}$ on $V^{T}_{0}$ and $V^{ad}_{0}$ respectively.  In the case of $V^{T}_{0}$ the fiber metric $g^{T}_{0}$ completely recovers the original metric $g_{0}$ since the equivariant diffeomorphism $\varphi^{T}$ from Theorem \ref{thm_vector_bundles}.3 above is now an isometry of vector spaces.  For the bundle $V^{ad}_{0}\rightarrow M_{1}$ we get not only a fiber metric $h^{ad}_{0}$ but one additional piece of geometric information, namely that the invariant metric $g_{0}$ induces a horizontal distribution on $U_{\beta}\times N$ and hence an affine connection $\nabla^{ad}_{0}$ on $U_{\beta}\times\eta$ by the association of the $\eta$ factor with the local vertical invariant vector fields.  These geometries will be studied more in later sections.

\section{Construction of $V^{T},\rho,V^{ad}$}\label{sec_V_str}

The construction of the last section nearly proves for us Theorem \ref{thm_nt2_main1}.  The setup of this section is a sequence $(M^{n}_{i},g_{i})\rightarrow (X,d)$ of $\{A\}^{k}_{0}$-regular manifolds and the associated spaces $(FM_{i},\bar g^{FM}_{i},O(n))\rightarrow (Y,g^{Y},O(n))$ from Section \ref{sec_V_str}.  For each $i$ we may now apply the results of the last section to construct $O(n)$ equivariant vector bundles $V^{T}_{i},V^{ad}_{i}\rightarrow Y$ along with equivariant mappings $\rho_{i}:V^{T}_{i}\rightarrow TY$ with all the desired properties of Theorem \ref{thm_nt2_main1}.  To finish the theorem we show that after passing to a subsequence the vector bundles $\{V^{T}_{i}\}$ and $\{V^{ad}_{i}\}$ all become equivariantly vector bundle isomorphic.  Further we can pick $V^{T}$ with fiber metric $g^{T}$ so that the said isomorphisms $\phi_{i}:V^{T}\rightarrow V^{T}_{i}$ satisfy $||g^{T}-\phi_{i}^{*}g^{T}_{i}||_{C^{k+1,\alpha}}\rightarrow 0$.

This construction is in fact nearly immediate from Theorem \ref{thm_vector_bundles}.  Beginning with the $V^{T}$ bundle: if we take the trializations $\{(TU_{\beta}\times\eta_{i}, \varphi_{\beta\gamma,i}^{T})\}$ from Theorem \ref{thm_vector_bundles} then by construction the transition maps $\varphi_{\beta\gamma,i}^{T}$ are $\{C\}^{k+1,\alpha}_{0}$-bounded, as are the fiber metrics $g^{T}_{i}|_{U_{\beta}}$ for each $\beta$. We can thus pass to subsequences so that for each $\beta,\gamma$ the maps $\varphi_{\beta\gamma,i}^{T}$ and the metrics $g^{T}_{i}|_{U_{\beta}}$ converge in $C^{k+1,\alpha'}$ to maps $\varphi_{\beta\gamma}^{T}$ and fiber metrics $g^{T}_{\beta}$.  The limits satisfy cocycle conditions since the convergence is at least continuous and so define a global bundle $V^{T}$ with fiber metric $g^{T}$ as claimed.  Theorem \ref{thm_vector_bundles}.2 guarantees that $\eta_{i}\rightarrow\eta$, a limit nilpotent lie algebra, and the convergence of $\varphi^{T}_{\beta\gamma,i}$ tells us that $\varphi^{T}_{\beta\gamma}$ act by affine transformations on $\eta$ which are independent of $x\in U_{\beta}\cap U_{\gamma}$.

In the case of the $V^{ad}$ bundle the procedure is verbatim, however in addition to a limit fiber metric $h^{ad}$ we may limit out the induced affine connections $\nabla^{ad}_{i}$ and the canonical flat connection $\nabla^{flat}_{i}$ from of Section \ref{sec_approx_V_str} to get $G$-invariant affine connections $\nabla^{ad}$ and $\nabla^{flat}$ on $V^{ad}$.

\section{The Canonical Flat Geometry and the Limit Central Decomposition of $V^{ad}$}\label{sec_flat_conn}

The ability to use the bundles $V^{ad}$ and $V^{T}$ to do analysis on the limit space $X$ will be exploited in future sections but first we want to discuss how their internal structure captures properties of the collapse.  This is especially relevant when the $M_{i}$ only have curvature bounded away from some controlled subsets $S_{i}\rightarrow S\subseteq X$.  We have seen in the last sections how to construct a canonical flat connection $\nabla^{flat}$ on $V^{ad}$.  It turns out that the holonomy of this flat connection around $S$ describes a twisting of the $M_{i}$ during the collapsing process which is not possible in the bounded curvature scenario.  In particular this gives a necessary obstructions to removable singularity type theorems.

We would like to describe a refinement of this connection which yields more information about the bundle $V^{ad}$.  Namely we will construct a canonical sequence of nested subbundles $0\subseteq V^{c^{0}}\subseteq\ldots\subseteq V^{c^{k}} = V^{ad}$, each of which is invariant with respect to the flat connection on $V^{ad}$.  This nesting will be called the limit central decomposition of $V^{ad}$.  This nesting is useful even in the case when the collapsing manifolds have uniformly bounded curvature, and we will use it in Section \ref{sec_thm2proof} in the proof of Theorem \ref{thm_nt2_main2}.

The basis for the limit central decomposition is that the lie algebras $\eta_{i}$ and $\eta$ of the bundles $V^{ad}_{i}$ and $V^{ad}$, respectively, need not be the same.  The existence of the subbundles $V^{c^{a}}$ gives direct information about the changing of these lie algebras in the limit.  To understand the limit central decomposition of $V^{ad}$ we are going to introduce the central decomposition of the bundles $V^{ad}_{i}$ and their lie algebras $\eta_{i}$.

First a discussion of nilpotent lie algebras.  For an arbitrary nilpotent lie algebra $\eta=\eta^{0}$ let $c^{0}\equiv cent^{0}\leq \eta^{0}$ be its center.  Note that the center of a nilpotent lie algebra has positive dimension and so this is a nontrivial subspace.  Let $\eta^{1}\equiv \eta^{0}/cent^{0}$ be the quotient nilpotent lie algebra.  Then we have a vector space isomorphism $\eta^{0}\approx cent^{0}\oplus\eta^{1}$.  The space $cent^{0}$ clearly lives canonically as a subspace of $\eta^{0}$, however an embedding of $\eta^{1}$ into $\eta^{0}$ is the same as a choice of perpendicular $(cent^{0})^{\perp}$ and is not canonical.  If we repeat the process we can write $\eta^{1}\approx cent^{1}\oplus \eta^{2}$, where $cent^{1}$ is the center of $\eta^{1}$ and $\eta^{2}=\eta^{1}/cent^{1}$ is the quotient lie algebra.  Thus we have a vector space isomorphism $\eta^{0}\approx cent^{0}\oplus cent^{1}\oplus \eta^{2}$.  Again the spaces $\eta^{1},\eta^{2}$ do not canonically embed into $\eta^{0}$, however the subspace $cent^{0}\oplus cent^{1}\equiv c^{1}\subseteq \eta^{0}$ does.  This follows because $cent^{1}$ embeds canonically into $\eta^{1}$ and any two embeddings of $\eta^{1}$ into $\eta^{0}$ differ only by elements of $cent^{0}$.  Note that $c^{1}$ is even a subalgebra of $\eta^{0}$.  Now we can continue this process with $c^{a}\equiv cent^{0}\oplus\ldots\oplus cent^{a}\subseteq \eta^{0}$ and because each center is nontrivial this process must eventually terminate.  Hence we have a natural nesting $0\subseteq c^{0}\subseteq\ldots\subseteq c^{k}=\eta$.

\begin{definition}\label{def_nt2_centraldecom}
We call the nesting $0\subseteq c^{0}\subseteq\ldots\subseteq c^{k}=\eta$ of subalgebras $c^{a}$ the central decomposition of $\eta$.
\end{definition}

That each $c^{a}$ is in fact a subalgebra of $\eta$ can be proved inductively and hence an automorphism of $\eta$ preserves each $c^{a}$ and induces an automorphism on $c^{a}$.

Now we wish to apply the above decompositions of the lie algebras $\eta_{i}$ on a more global scale to our bundles $V^{ad}_{i}$. So first let $\{(U_{\beta}\times N_{i},\Lambda_{i},\varphi_{\beta,i},h^{coord}_{\beta,i})\}$ be the nil atlas structures of $FM_{i}$ over $Y$ as in the previous sections.  So the local trivializations of $V^{ad}_{i}$ are of the form $U_{\beta}\times \eta_{i}$.  Because the coordinate transformations are lie algebra automorphisms we see that the central decomposition nesting $0\subseteq c_{i}^{0}\subseteq\ldots\subseteq c_{i}^{k}=\eta_{i}$ is preserved and induces the nesting of vector bundles $0\subseteq V^{c^{0}}_{i}\subseteq\ldots\subseteq V^{c^{k}}_{i}=V^{ad}_{i}$ (strictly the $k$ should depend on $i$ here, but after passing to a subsequence we can assume otherwise).  Notice that these subbundles are also lie algebra bundles and that the flat connection $\nabla^{flat}_{i}$ on $V^{ad}_{i}$ restricts to a flat connection on each of the subbundles.  By letting $i$ tend to infinity we have naturally constructed a nesting $0\subseteq V^{c^{0}}\subseteq\ldots\subseteq V^{c^{k}}=V^{ad}$ of the bundle $V^{ad}$.

\begin{definition}\label{def_nt2_limitcentdecom}
We call the nesting $0\subseteq V^{c^{0}}\subseteq\ldots\subseteq V^{c^{k}}=V^{ad}$ the limit central decomposition of $V^{ad}$.
\end{definition}

Notice that the limit central decomposition of $V^{ad}$ is \textit{not} the central decomposition of $V^{ad}$.  In particular while it is certainly true that each fiber of $V^{c^{0}}$ lies in the center of the fibers of $V^{ad}$, it may be the case that the center of $V^{ad}$ is strictly larger.  This corresponds precisely with the fact that the lie algebra of $V^{ad}$ can be distinct from the lie algebras of the $V^{ad}_{i}$.

We would like a quick application of the limit central decomposition which will be used in Section \ref{sec_thm2proof}.  We have that for large $i$ the $FM_{i}$ fiber over $Y$ and we have our standard local trivializations $U_{\beta}\times (N_{i}/\Lambda_{i})$ of this bundle.  Using the fiberwise lie algebra exponential map on $U_{\beta}\times \eta_{i}$ we can pull back the lattice $\Lambda_{i}$ to a subset $exp^{-1}(\Lambda_{i})$ of $\eta_{i}$.  As in \cite{} we see that because $\eta_{i}$ is nilpotent that the integral span $L_{\beta,i}$ of $exp^{-1}(\Lambda_{i})$ is a vector space lattice.  In fact because the structure group of the trivializations $U_{\beta}\times \eta_{i}$ has been reduced to $Aut(\Lambda_{i})$ we see these lattices are globally well defined and give a lattice section $L_{i}\subseteq V^{ad}_{i}$.  More than that, because the $O(n)$ action on $V^{ad}_{i}$ also acts by $Aut(\Lambda_{i})$ in our local coordinate representations we have that the $L_{i}$ are $O(n)$ invariant.  Notice that because $\eta_{i}$ is nilpotent that the intersection of $L_{i}$ and $V^{c^{a}}_{i}$ forms a lattice for each element of the central decomposition.  We would like to exploit these points to give one more reduction of the structure group of the limit central decomposition.

To do that consider the following.  For the sake of good coordinates we have up to this point fixed inner products $h^{coord}_{\beta,i}$ on $\eta_{i}$ for each $\beta$.  By letting the inner products vary with $\beta$ we have guaranteed that the induced fiber metric $h^{ad}_{i}$ and connection $\nabla^{ad}_{i}$ from $FM_{i}$ had uniform bounds independent of $\beta$.  Instead now fix an inner product $h^{coord}_{i}$ on $\eta_{i}$.  Though $h^{ad}_{i},\nabla^{ad}_{i}$ are not bounded independent of $\beta$ with respect to $h^{coord}_{i}$, it is clear that with respect to some basepoint in $Y$ these bounds can be taken to degenerate at worst exponentially in the distance function.  Now fixing $h^{coord}_{i}$ has the following advantage.  Because the transformations preserve the lattice $L_{i}$, which is locally a constant lattice in each coordinate, with respect to a fixed background metric the transformation functions are now special linear.  More specifically if we fix an $h^{coord}_{i}$-orthonormal basis of $\eta_{i}$ such that the first $dim(c^{0})$ span $c^{0}$, the first $dim(c^{1})$ span $c^{1}$ and so forth then by writing each $U_{\beta}\times \eta_{i}$ in such coordinates we have reduced the structure group of each $V^{c^{a}}_{i}\subseteq V^{ad}_{i}$ to the special linear group.  This clearly limits to give a further reduction of the structure group of each $V^{c^{a}}\subseteq V^{ad}$ to the special linear group.

\section{The limit geometry of $V^{ad}$}\label{sec_geom_V_ad}

In order to use all this structure to do analysis we need to discuss two points.  To begin with we have so far built an equivariant bundles $V^{ad},V^{T}\rightarrow Y\stackrel{O(n)}{\rightarrow} X$ above $Y$ together with an equivariant fiber metrics $g^{T},h^{ad}$.  However these fiber metrics describe the collapsing geometry of the sequence $FM_{i}\rightarrow Y$.  While no doubt this information captures the collapsing sequence $M_{i}\rightarrow X$ it is important to be more explicit.  For this purpose we will introduce two more equivariant fiber metrics $g^{T,X}$ and $h^{ad,X}$ on the bundles $V^{T},V^{ad}$ respectively.  Once this is done we will compute the various differential equations satisfied by the fiber metrics and connections and see how they relate to the geometry of the spaces $M_{i}, FM_{i}, X$ and $Y$.

Now let us begin with a small open set $U^{X}=B_{\epsilon}(x)\subseteq X$.  By small here we mean $\epsilon$ is sufficiently small so that $U^{X}\approx \mathds{R}^{m}/\tilde T$ is a convex neighborhood of $x$.  Then we know we can pick neighborhoods $U_{i}=B_{\epsilon}(x_{i})\subseteq M_{i}$ with the universal covers $(\tilde U_{i},g_{i})\stackrel{C^{k,\alpha}}{\rightarrow}(\tilde U^{X},g_{\infty})$ converging such that there is an isometric $N=N^{0}\rtimes A^{N}$ action on $\tilde U^{X}$ with $(\tilde U^{X},g_{\infty})/N\approx (U^{X},d^{X})$, where $N^{0}$ is a connected nilpotent and $A^{N}$ is finite.  We let $(F\tilde U^{X},g^{FM}_{\infty})$ be the frame bundle above $\tilde U^{X}$ equipped with the natural metric (lemma \ref{lem_frame} say) and notice that $N$ lifts to a free action on $F\tilde U^{X}$ with $(F\tilde U^{X},g^{FM}_{\infty})/N\approx (U^{Y},g^{Y})\subseteq Y$ (to see this just notice that $(F\tilde U_{i},g^{FM}_{i})\rightarrow (F\tilde U^{X},g^{FM}_{\infty})$).

Now the invariant vectors (resp. vertical invariant vectors) on $F\tilde U^{X}$ correspond to sections of $V^{T}|_{U^{Y}}$ (resp. $V^{ad}|_{U^{Y}}$) and the induced inner product on these vectors gives us the fiber metric $g^{T}$ (resp. $h^{ad}$).  That is, if $V^{0},V^{1}\in \Gamma(V^{ad})$ and $\tilde V^{0},\tilde V^{1}\in \Gamma(TF\tilde U^{X})$ are the corresponding vertical vectors then $h^{ad}(V^{0},V^{1})(xN)\equiv g^{FM}_{\infty}(\tilde V^{0},\tilde V^{1})(x)$.  On the other hand if $\tilde V^{0},\tilde V^{1}$ are two invariant vectors on $F\tilde U^{X}$ then we can first project them into the perpendicular of the $O(n)$ orbits and then take their inner product to get a semidefinite fiber metric $g^{T,X}$ on $V^{T}$ (resp. $h^{ad,X}$ on $V^{ad}$).  That is $h^{ad,X}(V^{0},V^{1})\equiv g^{FM}_{\infty}(p_{O^{\perp}}(\tilde V^{0}),p_{O^{\perp}}(\tilde V^{1}))$.  Because the $N$ and $O(n)$ actions commute on $F\tilde U^{X}$ the projections of $\tilde V^{0},\tilde V^{1}$ are horizontal lifts of invariant vectors $\tilde V^{0,X},\tilde V^{1,X}$ on $\tilde U^{X}$ and the inner product satisfies $h^{ad,X}(V^{0},V^{1})=g^{FM}_{\infty}(p_{O^{\perp}}(\tilde V^{0}),p_{O^{\perp}}(\tilde V^{1})) = g_{\infty}(\tilde V^{0,X},\tilde V^{1,X})$.

It is instructive to see what this gives us when $U^{X}\subseteq X_{reg}$ is a subset of the regular part of $X$ and so is diffeomorphic to a ball in Euclidean space, and $N=N^{0}$ is connected (neither assumption is necessary for the following interpretation, it just makes notation more convenient).  In this case we have that $\tilde U^{X}\approx U^{X}\times N^{0}$ and $\tilde U^{Y}\approx U^{X}\times N^{0}\times O(n)$.  When we build the local adjoint bundles $U^{X}\times\eta\rightarrow U^{X}$ and $U^{X}\times O(n)\times\eta\rightarrow U^{X}\times O(n)$ the $N$ actions induce fiber metrics on these bundles.  The fiber metric $h^{ad}$ on $U^{X}\times O(n)\times\eta$ is of course the induced metric from this action while the fiber metric $h^{ad,X}$ constructed in the last paragraph on $U^{X}\times O(n)\times\eta$ is just the lift of the induced fiber metric on $U^{X}\times\eta$.  A similar statement holds for $g^{T,X}$ and so the fiber metrics $g^{T,X}$, $h^{ad,X}$ simply describe the unwrapped limit geometry above $X$.

Now to understand the equations satisfied by the geometric data we use the same interpretation as above.  Namely by using the unwrapped limits we view $(F\tilde U^{X},g^{FM}_{\infty})\stackrel{N}{\rightarrow} (U^{Y},g^{Y})$ as a principal bundle with $N$ acting isometrically.  We can locally identify $V^{ad}|_{U^{Y}}$ with the adjoint bundle of this principal bundle and the fiber metric and connection $h^{ad}|_{U^{Y}},\nabla^{ad}|_{U^{Y}}$ as the ones generated from this action.  Since the equations satisfied by the triple $(g^{Y},\nabla^{ad},h^{ad})$ are purely local computations these local unwrappings suffice to compute global equations.  When $U^{X}\subseteq X_{reg}$ then we may do the same to understand the geometry of $(g^{X},\nabla^{ad,X},h^{ad,X})$.

To do this the setup is fairly general and is as follows.  Let $P\stackrel{G}{\rightarrow}M$ be a principal $G$-bundle over a manifold $M$ and $\mathfrak{g}_{P}\rightarrow M$ be the associated adjoint bundle (we view $G$ as acting on the right to be consistent with the terminology in the rest of the paper).  If $h$ is a $G$ invariant metric on $P$ then it induces the associated triple $(\hat h,\hat\nabla,\check g)$, where $\check g$ is the quotient metric on $M$, $\hat h$ is the induced fiber metric on $\mathfrak{g}_{P}$ and $\hat\nabla$ is the induced affine connection on $\mathfrak{g}_{P}$.

We will need local coordinates in which to work and so we let $\{\check X^{ i }\}$ be a (local) vector basis on $M$ and $\{\hat V^{ a }\}$ on $\mathfrak{g}_{P}$.  It will be useful throughout to distinguish between horizontal and vertical entries so we will let $ a , b ,\ldots$ represent the vertical indices and $ i , j ,\ldots$ the horizontal indices.  Recall there is a one-to-one correspondence between the sections of $\mathfrak{g}_{P}$ and vertical right invariant vector fields on $P$ and so we let $\{V^{ a }\}$ be the vertical $G$-invariant vector fields associated to $\{\hat V^{ a }\}$.  We can also lift $\{\check X^{ i }\}$ horizontally with respect to $h$ to get a local horizontal basis $\{X^{ i }\}$ in $P$.  Then we point out that $\{X^{ i },V^{ a }\}$ forms a local basis on $P$ which are $G$-invariant.  Similarly given a $G$-invariant tensor $T$ on $P$ with values in $\mathcal{V}$ (resp. $\mathcal{H}$) we let $\hat T$ (resp. $\check T)$ be the associated section on tensor products of $\mathfrak{g}_{P}$, $TM$ and their dual's.

Our first order of business is to see how to compute covariant derivatives with respect to the induced connection $\hat\nabla$ on $\mathfrak{g}_{P}$.

\begin{lemma}\label{lem_adjoint_connection}
$\hat{\nabla}_{\check{X}^{ i }}\hat{V}^{ a } = \hat{[X^{i},V^{a}]}$.
\end{lemma}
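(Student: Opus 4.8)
The plan is to reduce the statement to the standard formula for the connection on an associated bundle and then identify the relevant terms with the Lie bracket of invariant vector fields on $P$. Recall that the induced connection $\hat\nabla$ on the adjoint bundle $\mathfrak{g}_P$ is defined via the principal connection on $P$ coming from the $G$-invariant metric $h$: a section of $\mathfrak{g}_P$ corresponds to a vertical $G$-invariant vector field, and its covariant derivative in a horizontal direction is obtained by horizontally lifting the base vector field, differentiating the vertical vector field along that lift, and projecting back to the vertical. Concretely, if $V^{a}$ is the vertical $G$-invariant field corresponding to $\hat V^{a}$ and $X^{i}$ is the horizontal lift of $\check X^{i}$, then $\hat\nabla_{\check X^{i}}\hat V^{a}$ corresponds to the vertical $G$-invariant vector field $p_{\mathcal V}\big(\mathcal L_{X^{i}} V^{a}\big) = p_{\mathcal V}\big([X^{i},V^{a}]\big)$, where $p_{\mathcal V}$ denotes projection onto the vertical distribution. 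So the first step is simply to write down this definition carefully in the local basis $\{X^{i},V^{a}\}$.

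The second step is to observe that $[X^{i},V^{a}]$ is already vertical, so the projection $p_{\mathcal V}$ is unnecessary. This is the key point. Since $V^{a}$ is a vertical $G$-invariant (right-invariant) vector field and $X^{i}$ is a horizontal lift, the flow of $X^{i}$ preserves the principal connection (because the connection is $G$-invariant and $X^i$ is a basic horizontal field), hence it carries vertical vectors to vertical vectors; differentiating, $[X^{i},V^{a}] = \mathcal L_{X^i}V^a$ stays tangent to the fibers. Equivalently: the horizontal distribution is $G$-invariant, so $[\mathcal H,\mathcal V]\subseteq \mathcal V$ — which is just the statement that horizontal lifts of basic fields commute with the $G$-action up to vertical correction, but here more is true since $X^i$ projects to a genuine vector field downstairs. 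One should verify that $[X^{i},V^{a}]$ is again $G$-invariant (it is, as the bracket of two $G$-invariant fields), so that it indeed corresponds to a well-defined section $\hat{[X^i,V^a]}$ of $\mathfrak g_P$. Combining with the first step gives $\hat\nabla_{\check X^{i}}\hat V^{a} = \hat{[X^{i},V^{a}]}$, which is the claim.

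I expect the main (mild) obstacle to be purely bookkeeping: being precise about the correspondence $\Gamma(\mathfrak g_P)\leftrightarrow\{$vertical $G$-invariant vector fields on $P\}$ and checking that the definition of $\hat\nabla$ used here — the one "generated from the $G$-action" as stated in the paragraph preceding the lemma — agrees with the Lie-derivative-and-project formula. There is no serious analytic difficulty since everything is a local, algebraic computation on the principal bundle; the content is entirely the verticality of $[X^i,V^a]$, which follows from $G$-invariance of the horizontal distribution. One subtlety worth a sentence: $X^i$ need only be the horizontal lift of the \emph{local} frame $\check X^i$, and the identity is $C^\infty(M)$-tensorial in the lower index $\check X^i$ (as a connection must be) while being a first-order differential operator in $\hat V^a$, so it suffices to check it on the chosen basis, exactly as set up.
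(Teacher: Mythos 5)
Your proof is correct in outline but takes a genuinely different route from the paper's. The paper works directly in a local trivialization $U\times G$: it writes a section of $\mathfrak g_P$ as an $\mathrm{Ad}$-equivariant map $\hat s$, identifies $\hat s(x,g)=g^{-1}_*s(x,g)$ with the corresponding right-invariant vertical field $s$, and then computes $\partial_{X^i}\hat s$ along a curve tangent to the horizontal lift to produce the bracket term. Your approach is coordinate-free: you invoke the characterization of the associated-bundle connection as ``Lie-differentiate along the horizontal lift and project to the vertical'' and then observe that the projection is vacuous. The two are of course equivalent, but the equivalence is precisely what the paper's local computation establishes; you flag this yourself as ``bookkeeping,'' and it is fair to say that your first step assumes a reformulation that the paper's argument supplies. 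If one grants the Lie-derivative characterization as the operative definition of $\hat\nabla$ (which is a defensible reading of ``the induced affine connection'' in the paragraph preceding the lemma), then your proof reduces the lemma to the verticality of $[X^i,V^a]$, which is a clean and more conceptual presentation. What each buys: the paper's version makes the formula explicit and self-contained at the cost of wading through coordinate bookkeeping on $U\times G$; yours is shorter and more transparent at the cost of leaning on a standard fact about associated-bundle connections.

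One small correction to your verticality argument. Your first justification — that ``the flow of $X^i$ preserves the principal connection'' — is not right in general: the flow of a basic horizontal field need not preserve the horizontal distribution (that would require vanishing of curvature contracted with $X^i$). Your second, ``equivalent'' formulation also overreaches: the inclusion $[\mathcal H,\mathcal V]\subseteq\mathcal V$ fails for arbitrary horizontal vector fields. The clean and correct reason, which you arrive at in the final clause, is simply that $X^i$ is \emph{projectable} with $d\pi(X^i)=\check X^i$ and $d\pi(V^a)=0$, so $d\pi\bigl([X^i,V^a]\bigr)=[\check X^i,0]=0$. That one line is all that is needed; the preceding two attempts should be dropped.
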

\begin{proof}
Let $U\times G$ with $U\subseteq M$ be a local principal coordinate neighborhood and let $\hat{s}$ be a section of $\mathfrak{g}_{P}$ (hence a map $\hat{s}: U\times G \rightarrow \mathfrak{g}$ which is invariant under the $G$ action).  Then $\hat{s}(x,g)= Ad_{g}s_{x}= g^{-1}\cdot s_{x}\cdot g$ where $s_{x}\in \mathfrak{g}$.  Note that the induced right invariant vertical vector field on $U\times G$ is then $s(x,g)=s_{x}\cdot g$, and so $\hat{s}(x,g)=g^{-1}\cdot s(x,g)$.  Then we have that $\hat{\nabla}_{\check{X}^{ i }}\hat{s} = \partial_{X^{ i }}\hat{s}$. Let $\gamma(t)=(\gamma_{x}(t),\gamma_{g}(t))$ be a smooth curve with $\gamma(0)=(x,g)$ and $\dot{\gamma}(0)=X^{ i }$.  Then $\partial_{X^{ i }}\hat{s} = \frac{d}{dt}|_{t=0}((\gamma_{g}^{-1}(t))_{*}(s(x,g))) = g^{-1}_{*}([X_{g},s]) = \hat{[X_{g},s]}$, where $X_{g}$ is the $G$ component of $X$.  But we have that $\hat{[X_{g},s]} = \hat{([X,s]-[X_{x},s])} = \hat{[X,s]}$ because $X_{x}$ is tangent to $U$ and $s$ is tangent to $G$.
\end{proof}

Recall that if $A$ and $T$ are the O'Neill tensors on $P$ then we have that $T(V^{ a },V^{ b },X^{ i })= \langle\nabla_{V^{ a }}V^{ b },X^{ i }\rangle$ and $A(X^{ i },X^{ j },V^{ a }) =\langle\nabla_{X^{ i }}X^{ j }, V^{ a }\rangle$.  As with the notation above we can use these to define tensors $\hat T$ and $\check A$ on the bundles $\mathfrak{g}^{*}_{P}\otimes\mathfrak{g}^{*}_{P}\otimes T^{*}M$ and $T^{*}M\otimes T^{*}M\otimes\mathfrak{g}^{*}_{P}$ by the formulas $\hat T(\hat V^{ a },\hat V^{ b },\check X^{ i }) = T(V^{ a },V^{ b },X^{ i })$ and $\check A(\check X^{ i },\check X^{ j },\hat V^{ a }) = A(X^{ i },X^{ j },V^{ a })$.  As usual we may define the mean curvature vector $H$ as the trace of $T$ (this is following \cite{B} and may have a sign difference from other references).  Now we compute to see how our triple $(\hat h,\hat\nabla,\check g)$ relates to this information.

\begin{lemma}\label{lem_Veq1} We have that
\begin{enumerate}
\item $\hat{T}_{ a  b   i } = -\frac{1}{2}\hat{\nabla}_{ i }\hat{h}_{ a  b }$

\item $(\nabla_{ i }T_{ a  b   j })^{\wedge} = -\frac{1}{2}\hat{\nabla}^{2}_{ i   j }\hat{h}_{ a  b } + \frac{1}{4}\hat{h}^{\gamma\sigma} (\hat{\nabla}_{ i }\hat{h}_{ a \sigma}\hat{\nabla}_{ j }\hat{h}_{\gamma b } + \hat{\nabla}_{ i }\hat{h}_{\gamma b }\hat{\nabla}_{ j }\hat{h}_{ a \sigma})$
    \[
    = -\frac{1}{2}\hat{\nabla}^{2}_{ i   j }\hat{h}_{ a  b } + \hat{h}^{\gamma\sigma} (\hat{T}_{ a \sigma  i }\hat{T}_{\gamma b  \check by} + \hat{T}_{\gamma b   i }\hat{T}_{ a \sigma j})
    \]

\item $\hat{H}_{ i } \equiv \hat{h}^{ a  b }\hat{T}_{ a  b   i } = -\frac{1}{2}\hat{h}^{ a b }\hat{\nabla}_{ i }\hat{h}_{ a  b }$

\item $(g^{ i  j }\nabla_{ i }T_{ a  b   j })^{\wedge} = -\frac{1}{2}\hat{\triangle}\hat{h}_{ a  b } + \frac{1}{2}\hat{h}^{\gamma\sigma}\check{g}^{ i  j } (\hat{\nabla}_{ i }\hat{h}_{ a \gamma}\hat{\nabla}_{ j }\hat{h}_{\sigma b })$
    \[
    =  -\frac{1}{2}\hat{\triangle}\hat{h}_{ a  b } + 2\hat{h}^{\gamma\sigma}\check{g}^{ i j }T_{ a \gamma  i }T_{\sigma b   j }
    \]
\end{enumerate}
\end{lemma}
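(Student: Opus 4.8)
The plan is to obtain all four identities from the single computation behind (1), which rests only on Lemma \ref{lem_adjoint_connection} and the O'Neill formalism for the submersion $P\to M$. Since everything is local and tensorial, I would work in a principal chart $U\times G$, fix a local frame $\{\check X^{i}\}$ on $M$ with $h$-horizontal lifts $\{X^{i}\}$ on $P$ (these are $G$-invariant because $\mathcal H=\mathcal V^{\perp}$ is $G$-invariant), and the $G$-invariant vertical fields $\{V^{a}\}$ representing a local frame $\{\hat V^{a}\}$ of $\mathfrak g_{P}$, extending $\hat\nabla$ to the relevant tensor powers. The standing facts I would record first are: $T$ is symmetric in its two vertical arguments (the fibers, being $G$-orbits, form an integrable distribution); $T(\cdot,\cdot,Z)$ sees only $Z^{\mathcal H}$ (for vertical $V,W$ the vector $T_{V}W=(\nabla_{V}W)^{\mathcal H}$ is horizontal); $[X^{i},V^{a}]$ is vertical (it projects to $[\check X^{i},0]=0$); and hence Lemma \ref{lem_adjoint_connection} says $\hat\nabla_{i}\hat V^{a}$ is represented by the vertical field $[X^{i},V^{a}]$.

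For (1) I would compute $\check X^{i}(\hat h_{ab})$ in two ways, using that $\hat h_{ab}=\hat h(\hat V^{a},\hat V^{b})$ pulls back to the fiber-constant function $\langle V^{a},V^{b}\rangle$ on $P$. Expanding downstairs by $\hat\nabla$ gives $(\hat\nabla_{i}\hat h)_{ab}+\hat h(\hat\nabla_{i}\hat V^{a},\hat V^{b})+\hat h(\hat V^{a},\hat\nabla_{i}\hat V^{b})$. Expanding upstairs by metric compatibility of $\nabla$ on $P$, writing $\nabla_{X^{i}}V^{a}=\nabla_{V^{a}}X^{i}+[X^{i},V^{a}]$ and using $\langle X^{i},V^{b}\rangle\equiv 0$ together with the definition of $T$ and the verticality of $[X^{i},V^{a}]$, one gets $-T_{abi}+\hat h(\hat\nabla_{i}\hat V^{a},\hat V^{b})$ for $\langle\nabla_{X^{i}}V^{a},V^{b}\rangle$, and symmetrically for the $b$-slot (here the symmetry of $T$ in its vertical arguments is used). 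Comparing the two expressions, the $\hat\nabla\hat V$ terms cancel and one is left with $(\hat\nabla_{i}\hat h)_{ab}=-2\hat T_{abi}$. Contracting with $\hat h^{ab}$ and recalling $\hat H_{i}=\hat h^{ab}\hat T_{abi}$ gives (3) immediately.

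For (2) I would covariantly differentiate (1). The point that needs the most care is that the hat of the intrinsic covariant derivative of the $P$-tensor $T$, namely $(\nabla_{i}T_{abj})^{\wedge}$, is not $\hat\nabla_{i}\hat T_{abj}$; the discrepancy is found by matching $(\nabla_{X^{i}}T)(V^{a},V^{b},X^{j})$ against the intrinsic definition of $\hat\nabla_{i}\hat T$. The third-slot correction involves $\nabla_{X^{i}}X^{j}$, whose horizontal part lifts $\check\nabla_{i}\check X^{j}$ and whose vertical (O'Neill $A$) part is annihilated because $T(V^{a},V^{b},\cdot)$ only sees horizontal vectors; so it drops out. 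In the first two slots, since $\nabla_{X^{i}}V^{a}-[X^{i},V^{a}]=\nabla_{V^{a}}X^{i}$ and $[X^{i},V^{a}]$ represents $\hat\nabla_{i}\hat V^{a}$, the correction is $T(\nabla_{V^{a}}X^{i},V^{b},X^{j})=T(T_{V^{a}}X^{i},V^{b},X^{j})$; using $\langle T_{V^{a}}X^{i},V^{c}\rangle=-\hat T_{aci}$ and keeping track of the overall sign in the expansion, this and its $b$-counterpart contribute precisely $\hat h^{\gamma\sigma}(\hat T_{a\sigma i}\hat T_{\gamma b j}+\hat T_{\gamma b i}\hat T_{a\sigma j})$. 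Since $\hat\nabla_{i}\hat T_{abj}=-\frac{1}{2}\hat\nabla^{2}_{ij}\hat h_{ab}$ by differentiating (1) (the frame's Levi-Civita terms being absorbed into the Hessian), and $\hat T=-\frac{1}{2}\hat\nabla\hat h$, this gives both displayed forms of (2).

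Finally (4) is the $\check g$-trace of (2): $\check g^{ij}\hat\nabla^{2}_{ij}\hat h_{ab}=\hat\triangle\hat h_{ab}$, and the two quadratic terms $\hat h^{\gamma\sigma}\hat T_{a\sigma i}\hat T_{\gamma b j}$ and $\hat h^{\gamma\sigma}\hat T_{\gamma b i}\hat T_{a\sigma j}$ coincide after relabelling the symmetric dummy pairs $(\gamma,\sigma)$ and $(i,j)$, which produces the factor $2$; substituting $\hat T=-\frac{1}{2}\hat\nabla\hat h$ once more gives the first form. The only real obstacle is the bookkeeping in (2) — correctly isolating which O'Neill-type terms survive when passing from $\nabla T$ on $P$ to $\hat\nabla\hat T$ on $\mathfrak g_{P}$, and getting the signs and index positions right — the rest being routine given (1).
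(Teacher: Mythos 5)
Your proposal is correct and follows essentially the same route as the paper: part (1) is the same calculation (the paper writes out $T_{abi}$, symmetrizes in $a,b$, and collects terms, while you compute $\check X^{i}\hat h_{ab}$ upstairs and downstairs — identical bookkeeping), and your correction term $T(T_{V^{a}}X^{i},V^{b},X^{j})$ in part (2) is exactly the paper's Christoffel identity $\Gamma^{\sigma}_{ia}=\hat\Gamma^{\sigma}_{ia}-\hat h^{\sigma\gamma}T_{a\gamma i}$ in different clothing. Parts (3) and (4) are obtained by tracing in both.
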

\begin{proof}
$1)$  We have
\[
T_{ a  b   i } = \langle\nabla_{V^{ a }}V^{ b },X^{ i }\rangle = -\langle V^{ b },\nabla_{V^{ a }}X^{ i }\rangle = -\langle V^{ b },\nabla_{X^{ i }}V^{ a }\rangle + \langle V^{ b },[X^{ i },V^{ a }]\rangle
\]
\[
= -\nabla_{X^{ i }}\langle V^{ a },V^{ b }\rangle + \langle\nabla_{X^{ i }}V^{ b },V^{ a }\rangle + \langle V^{ b },[X^{ i },V^{ a }]\rangle
\]
But $T_{ a  b   i } = T_{ b  a   i }$ so adding we get
\[
2T_{ a  b   i } = -2\nabla_{X^{ i }}\langle V^{ a },V^{ b }\rangle + \langle\nabla_{X^{ i }}V^{ a },V^{ b }\rangle + \langle\nabla_{X^{ i }}V^{ b },V^{ a }\rangle + \langle V^{ a },[X^{ i },V^{ b }]\rangle + \langle V^{ b },[X^{ i },V^{ a }]\rangle
\]
\[
= -\partial_{X^{ i }}\hat{h}_{ a  b } + \hat{\Gamma}^{\sigma}_{ i  a }\hat{h}_{\sigma b } + \hat{\Gamma}^{\sigma}_{ i  b }\hat{h}_{ a \sigma}
\]

$2)$ $\nabla_{ i }T_{ a  b   j } = \partial_{X^{ i }}T_{ a  b   j } - \Gamma^{\sigma}_{ i   a }T_{\sigma b   j } - \Gamma^{\sigma}_{ i   b }T_{ a \sigma  j } - \check{\Gamma}^{k}_{ i   j }T_{ a  b k}$

But we have that
\[
 \Gamma^{\sigma}_{ i   a } = \hat{h}^{\sigma\gamma}(\langle\nabla_{X^{ i }}V^{ a },V^{\gamma}\rangle) =
 \hat{h}^{\sigma\gamma}(\langle [X^{ i },V^{ a }],V^{\gamma}\rangle+\langle \nabla_{V^{a}}X^{i},V^{\gamma}\rangle)
\]

\[
= \hat{\Gamma}^{\sigma}_{ i   a } - \hat{h}^{\sigma\gamma}T_{ a \gamma  i }
\]

Plugging this into the first line yields the result.  The proof's of $(3)$ and $(4)$ are just by tracing.
\end{proof}

Finally we want to relate the curvatures, in particular the Ricci curvatures below though other curvatures are similar, of $P$ and $M$ to the differential geometry of the triple.  The following is direct from the O'Neill formulas \cite{B} and computations as in lemma \ref{lem_Veq1}.  Below we let $c_{abc}=\langle [V^{a},V^{b}],V^{c}\rangle$ be the structure coefficients of the lie algebra.

\begin{prop}\label{prop_curv_comp}
Let $\hat R_{ a  b }$ be the Ricci of the $G$-fiber, $\check R_{ i  j }$ be the Ricci on $M$, and let $R_{IJ}$ be the Ricci on $P$.  Then
\begin{enumerate}
\item $R_{ a  b } = \hat R_{ a  b } -\frac{1}{2}\hat\triangle\hat h_{ a  b } +\frac{1}{4}\hat h^{\gamma\sigma}\check g^{ i  j } (2\hat\nabla_{ i }\hat h_{ a \sigma}\hat\nabla_{ j }\hat h_{\gamma b } - \hat\nabla_{ i }\hat h_{\gamma\sigma}\hat\nabla_{ j }\hat h_{ a  b }) + \check g^{ i  j }\check g^{kl}\hat A_{i  k a }\hat A_{ j l b }$

\item $R_{a i} = \hat\nabla^{k}A_{kia} - H^{k}A_{kia} + h^{\sigma_{0}\sigma_{1}}h^{\gamma_{0}\gamma_{1}}(c_{\sigma_{0}a\gamma_{0}}T_{\sigma_{1}\gamma_{1}a} +c_{\gamma_{0}\sigma_{0}\sigma_{1}}T_{a\gamma_{1}i})$

\item $2\hat h^{ a  b }\check g^{kl}\check A_{ i  k a }\check A_{ j l b } + \frac{1}{4} \hat h^{ a  b }\hat h^{\gamma\sigma} \hat\nabla_{ i }\hat h_{ a \sigma}\hat\nabla_{ j }\hat h_{\gamma b } + R_{ i   j } = \check R_{ i   j } +\frac{1}{2}\mathcal{L}_{\check H}\check g_{ i   j }$
\end{enumerate}

or we can write it as

\begin{enumerate}
\item $R_{ a  b } = \hat{R}_{ a  b } -\frac{1}{2}\hat{\triangle}\hat{h}_{ a  b } - \check{H}^{ i }\hat{T}_{ a  b   i } + \hat{A}^{ i   j } _{\;\, a }\hat{A}_{ i   j  b } + 2\hat{T}_{ a }^{\;\sigma  i }\hat{T}_{ b \sigma  i }$

\item $R_{a i} = \hat\nabla^{k}A_{kia} - H^{k}A_{kia} + h^{\sigma_{0}\sigma_{1}}h^{\gamma_{0}\gamma_{1}}(c_{\sigma_{0}a\gamma_{0}}T_{\sigma_{1}\gamma_{1}a} +c_{\gamma_{0}\sigma_{0}\sigma_{1}}T_{a\gamma_{1}i})$

\item $\check{A}^{ i   j }_{\;\, a }\check{A}_{ i   j  b } + \check{T}^{ a  b }_{\;\;\;\; i }\check{T}_{ a  b   j } + R_{ i   j } = \check{R}_{ i   j } +\frac{1}{2}\mathcal{L}_{\check{H}}\check{g}_{ i   j }$
\end{enumerate}
\end{prop}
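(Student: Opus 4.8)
The plan is to reduce everything to the classical O'Neill submersion formulas for the Riemannian submersion $P \stackrel{G}{\to} M$ with totally geodesic-free (i.e. general) $G$-fibers, and then rewrite each term appearing there in the language of the triple $(\check g, \hat\nabla, \hat h)$ using Lemma \ref{lem_adjoint_connection} and Lemma \ref{lem_Veq1}. Since the statement is purely local, I may work in a principal coordinate chart $U \times G$ and pick the $G$-invariant frame $\{X^{i}, V^{a}\}$ as in the setup, so that all curvature quantities are computed from the structure functions $[X^i,X^j]$, $[X^i,V^a]$, $[V^a,V^b]$ together with the metric coefficients $\check g_{ij}$, $\hat h_{ab}$. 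The O'Neill tensors $A$ and $T$ in this frame are exactly the objects $\check A$ and $\hat T$ defined before the statement, and the brackets $[V^a,V^b]$ are controlled by the structure coefficients $c_{abc}$ since the $V^a$ are right-invariant vertical fields.

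First I would record the three O'Neill identities in the form found in \cite{B} (Besse, Chapter 9): the all-vertical Ricci $R_{ab}$ in terms of $\hat R_{ab}$, the mean curvature $H$, the tensors $T$ and $A$; the mixed Ricci $R_{ai}$ in terms of divergences of $A$ and $T$; and the all-horizontal Ricci $R_{ij}$ in terms of $\check R_{ij}$, the Lie derivative of $\check g$ along $\check H$, and quadratic terms in $A$ and $T$. These are the second displayed list in the statement almost verbatim, once one notes that $\langle \nabla_{V^a}V^b, X^i\rangle$, $\langle\nabla_{X^i}X^j,V^a\rangle$ are by definition $\hat T_{abi}$, $\check A_{ija}$, and that the extra bracket term in $R_{ai}$ is precisely the $c_{abc}$-contribution coming from $[V^a,V^b]$ not vanishing. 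So step one is: cite the O'Neill formulas, match notation, and obtain the ``$\check A, \hat T$'' form of (1)--(3) directly.

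Second I would pass from the $\hat T$-form to the $\hat\nabla\hat h$-form. By Lemma \ref{lem_Veq1}.1 we have $\hat T_{abi} = -\tfrac12 \hat\nabla_i \hat h_{ab}$, so every occurrence of $\hat T$ and $\check H = \hat h^{ab}\hat T_{abi}$ gets rewritten in terms of $\hat\nabla \hat h$; the quadratic term $\hat T_a^{\ \sigma i}\hat T_{b\sigma i}$ becomes $\tfrac14 \hat h^{\gamma\sigma}\check g^{ij}\hat\nabla_i\hat h_{a\sigma}\hat\nabla_j\hat h_{\gamma b}$ after raising indices, and the $\check H^i \hat T_{abi}$ term expands into $\tfrac14 \hat h^{\gamma\sigma}\check g^{ij}\hat\nabla_i\hat h_{\gamma\sigma}\hat\nabla_j\hat h_{ab}$. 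Collecting the numerical coefficients (the $2\hat\nabla\hat h\nabla\hat h - \hat\nabla\hat h\nabla\hat h$ combination in (1)) is bookkeeping that I would not grind through here, but it is exactly the content of Lemma \ref{lem_Veq1}.2 and \ref{lem_Veq1}.4, which already package the needed $\nabla T$-to-$\nabla^2 \hat h$ conversions. For (3) one additionally uses that the $\check H$-Lie-derivative term on the right, $\tfrac12 \mathcal L_{\check H}\check g_{ij}$, is the trace of $\nabla T$ and is left untouched, while on the left the $T$-quadratic term is rewritten via Lemma \ref{lem_Veq1}.1 into the $\tfrac14 \hat h^{ab}\hat h^{\gamma\sigma}\hat\nabla_i\hat h_{a\sigma}\hat\nabla_j\hat h_{\gamma b}$ shown.

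The main obstacle I expect is the mixed equation (2) for $R_{ai}$: unlike the pure vertical and pure horizontal cases, here the non-abelianness of the fiber genuinely enters and one must carefully identify the bracket terms $[V^a,V^b]$, $[X^i,V^a]$ and separate the piece that becomes $\hat\nabla^k A_{kia}$ (the induced-connection divergence, justified by Lemma \ref{lem_adjoint_connection}) from the piece $-H^k A_{kia}$ and from the genuinely lie-algebraic remainder involving $c_{\sigma_0 a\gamma_0}$. Getting the exact index contractions and signs in that $c \cdot T$ term right — matching the convention $c_{abc} = \langle[V^a,V^b],V^c\rangle$ and the sign convention for $T$ and $H$ from \cite{B} — is the delicate bookkeeping. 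Everything else is a mechanical rewriting of standard submersion geometry, so once the mixed term is pinned down the proposition follows by assembling Lemma \ref{lem_adjoint_connection}, Lemma \ref{lem_Veq1}, and the O'Neill formulas.
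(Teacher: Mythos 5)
Your proposal takes essentially the same route as the paper, which dispatches this proposition with a one-line citation: ``direct from the O'Neill formulas \cite{B} and computations as in Lemma \ref{lem_Veq1}.'' You correctly identify that the second displayed form is the O'Neill/Besse version in the $G$-invariant frame $\{X^i,V^a\}$ (with $\hat T$, $\check A$ matching the paper's conventions and the extra $c_{abc}$ term in the mixed Ricci coming from the nontrivial vertical brackets), and that the first form follows by substituting $\hat T_{abi}=-\tfrac12\hat\nabla_i\hat h_{ab}$ and $\check H^i=-\tfrac12\check g^{ij}\hat h^{\gamma\sigma}\hat\nabla_j\hat h_{\gamma\sigma}$ from Lemma \ref{lem_Veq1} and using Lemma \ref{lem_adjoint_connection} to recognize $\hat\nabla^k A_{kia}$; this is precisely the paper's intended derivation, just spelled out in more detail.
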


Notice that since we are on a principal bundle we may write $H=\nabla\mu$ as the gradient of the function $-ln\sqrt{det h}$.  With this we can view the first equation as a $\mu$-harmonic map for the fiber metric $h_{ a  b }$, the second equation as a Yang-Mills equations for the connection, and the third as a soliton type equation for the base metric.

\section{Proof of Theorems \ref{thm_nt2_main2} and \ref{cor_nt2_main1}}\label{sec_thm2proof}

The goal of this section is to prove Theorem \ref{thm_nt2_main2} and Theorem \ref{cor_nt2_main1}.

As was mentioned in the comments following the statement of Theorem \ref{thm_nt2_main2} the proof of this statement must have a global nature (the result fails if any of the hypothesis are removed) and relies on a series of maximum principals.  The proof is in several steps, which we quickly outline.  For the brief outline we assume the lie algebra $\eta$ of $V^{ad}\rightarrow Y\rightarrow X$ is abelian, getting around this assumption will involve the limit central decomposition constructed in Section \ref{sec_flat_conn}.  To begin with it was shown at the end of Section \ref{sec_flat_conn} that we can further reduce the structure group of $V^{ad}\rightarrow Y$ to the special linear group and in Section \ref{sec_geom_V_ad} we constructed the semidefinite fiber metric $h^{ad,X}$.  Recall that $h^{ad,X}$ is just the lift of the induced fiber metric from the nilpotent action on the local unwrappings above $X$.  Because of the structure group reduction the quantities $v^{ad,X}=\sqrt{det(h^{ad,X})}$ and $\mu^{ad,X}=-ln(v^{ad,X})$ are globally well defined.  If $(U^{X}\times N^{0},g^{U\times N})\rightarrow (U^{X},d)$ is a local unwrapped limit of the $M_{i}$ with $U^{X}\subseteq X_{reg}$, and hence $g^{U\times N}$ is Ricci flat with an isometric $N$ action, then it turns out that $\mu^{ad,X}$ is directly tied to the mean curvature (and indeed the integrability tensor) of this Riemannian submersion.  We will see that the $\mu^{ad,X}$ satisfies an interesting differential inequality (when $\eta$ is abelian at any rate) and that after a little work we will be able to apply a maximum principal to find that $\mu^{ad,X}=constant$.  From this we will be able to conclude that the mean curvature and integrability tensor of the unwrapped limits above $X$ actually vanish.  The final step is to show that the full second fundamental form of these unwrapped limits vanish, which involves another maximum type principal.  This will then tell us that $(U^{X}\times N^{0},g^{U\times N})$ is isometric to $U^{X}\times \mathds{R}^{k}$ with a product metric and in particular that if $g^{U\times N}$ is Ricci flat then so is $U$ with the quotient metric.

\begin{proof}[Proof of Theorem \ref{thm_nt2_main2}]

The beginning point of the actual proof comes from the limit central decomposition introduced in Section \ref{sec_flat_conn}.  Namely let $0\subseteq V^{c^{0}}\subseteq\ldots\subseteq V^{c^{k}}=V^{ad}$ be the limit central decomposition.  We have already discussed how the reduction of the structure group to the special linear group does in fact apply to each equivariant bundle $V^{c^{a}}$ and so for each $0\leq a\leq k$ we can define the quantities $v^{a,X}=\sqrt{det(h^{ad,X}|_{V^{c^{a}}})}$ and $\mu^{a,X}=-ln(v^{a,X})$.  We similarly define $v^{ad}=\sqrt{det(h^{ad})}$ and $\mu^{ad}=-ln(v^{ad})$ for the standard limit geometry on $V^{ad}$.  To see what type of equations the $\mu^{a,X}$ satisfy we need to interpret them.  For this let $U^{X}\subseteq X_{reg}$ be any small open set (being in the regular part is not needed, it just makes the following notation more convenient) with $(U^{X}\times N^{0},g^{U\times N})\stackrel{N}{\rightarrow} (U^{X},d^{X})$ the unwrapped limit geometry and $(U^{X}\times\eta,h^{U})$ the local adjoint bundle with the induced fiber metric $h^{U}$.  For $0\subseteq c^{0}\subseteq\ldots\subseteq c^{k}=\eta$ the limit central decomposition of $\eta$ we can then consider for each $a$ the induced bundle $(U^{X}\times c^{a},h^{a,U})$ where $h^{a,U}=h^{U}|_{c^{a}}$.  Then the function $\mu^{a,X}$ on $Y$ is just the lift of $\mu^{a,U}=-ln(\sqrt{ det(h^{a,U})})$ from $U^{X}$.  To understand this quantity note that for each $c^{a}$ we can consider the submersion $(U^{X}\times N^{0},g^{U\times N}) \stackrel{N}{\rightarrow} (U^{X},d^{X})$ in two steps, namely $(U^{X}\times N^{0},g^{U\times N}) \stackrel{C^{a}}{\rightarrow} (U^{X}\times (N/C^{a}),g^{U\times (N/C^{a})}) \stackrel{N/C^{a}}{\rightarrow} (U^{X},d^{X})$ by first quotienting out by the subgroup $C^{a}\leq N$ induced by $c^{a}\leq\eta$ and then by looking at the resulting Riemannian submersion over $U^{X}$.  We let $\mu^{a,U}$ define a function on $U^{X}\times (N^{0}/C^{a})$ by pulling it back by the submersion map and study first the Riemannian submersion $(U^{X}\times N^{0},g^{U\times N}) \stackrel{C^{a}}{\rightarrow} (U^{X}\times (N^{0}/C^{a}),g^{U\times (N/C^{a})})$.  If we let $H^{a}$ be the horizontal mean curvature vector field in $U^{X}\times N^{0}$ of this submersion then we note that $H^{a}$ is the horizontal lift of the vector field $\nabla\mu^{a,U}$ on $U^{X}\times (N^{0}/C^{a})$.  By tracing Proposition \ref{prop_curv_comp}.1 we get on $U^{X}\times (N^{0}/C^{a})$ that the $\mu^{a,U}$ satisfies the equation

\[
\triangle_{U^{X}\times (N/C^{a})}(\mu^{a,U})-\langle\nabla\mu^{a,U},\nabla\mu^{a,U}\rangle+|A^{C^{a}}|^{2}+R^{a}=0
\]

where $A^{C^{a}}$ is the integrability tensor of the Riemannian submersion $U^{X}\times N^{0}\rightarrow U^{X}\times (N^{0}/C^{a})$ and $R^{a}$ is the scalar curvature of the $C^{a}$ fiber above the corresponding point.  That the right hand side vanishes is because the metric $g^{U\times N}$ on $U^{X}\times N$ is Ricci flat.  Now $\mu^{a,U}$, $|A^{C^{a}}|$ and $R^{a}$ are all constant on each $N/C^{a}$ fiber and $U^{X}\times (N/C^{a})\rightarrow U^{X}$ is also a Riemannian submersion, so by replacing the vertical derivatives in $\triangle_{U^{X}\times (N/C^{a})}$ with second fundamental form terms of the $U^{X}\times(N^{0}/C^{a})\rightarrow U^{X}$ submersion we then see on $U^{X}$ that $\mu^{a,U}$ satisfies

\[
\triangle_{U^{X}}(\mu^{a,U})-\langle\nabla\mu^{a,U},\nabla\mu^{ad,X}\rangle+|A^{C^{a}}|^{2}+R^{a}=0
\]

We repeat one more time to find the equation satisfied by $\mu^{a,X}$ on $Y$.  That is, $\mu^{a,X}$ is the lift of $\mu^{a,U}$ and so now we need to add in vertical derivative terms to get the $Y$-laplacian, which tells us that as a function on $Y$ that the $\mu^{a,X}$ satisfies the equation

\[
\triangle_{Y}\mu^{a,X}-\langle\nabla\mu^{a,X},\nabla\mu^{ad}\rangle+|A^{C^{a}}|^{2}+R^{a}=0
\]

where of course $|A^{C^{a}}|^{2}+R^{a}$ is understood to be the corresponding lifted function on $Y$.  This equation holds everywhere on $Y$ that $\mu^{a,X}$ is smooth, which is precisely the open dense subset of $Y$ where the $O(n)$ action has finite isotropy.  Our first problem here is that the $R^{a}$ term will be negative when $C^{a}$ is not abelian.  To handle this we begin by looking only at $C^{0}$.  In this case we know that $c^{0}\leq\eta$ is contained in the center of $\eta$ by construction and hence $C^{0}$ is abelian.  This implies $R^{0}=0$ and thus $\triangle_{Y}\mu^{0,X}-\langle\nabla\mu^{0,X},\nabla\mu^{ad}\rangle \leq 0$.  If we could conclude that $\mu^{0,X}$ obtained a minimum somewhere on the nonexceptional part of $Y$ then a maximum principle would conclude that $\mu^{0,X}=constant$.  So let $Y_{E}$ be the exceptional part of $Y$, that is where the $O(n)$ action has non finite isotropy.  But then we know that $Y_{E}$ is precisely where the semidefinite metric $h^{ad,X}$ becomes singular because the projection $p_{O^{\perp}}$ degenerates.  More than that, given the equivariant bundle $FM_{i}\stackrel{f_{i}}{\rightarrow}Y$ we know that an exceptional isotropy orbit in $Y$ corresponds to an orbit in $FM_{i}$ that intersects the center of the nilfibers in $FM_{i}$.  This is precisely the statement that $h^{ad,X}|_{c^{0}}=h^{0,X}$ becomes only semidefinite at these points.  Thus $v^{0,X}=0$ on $Y_{E}$ and so $\mu^{0,X}\rightarrow\infty$ near $Y^{E}$.  Thus by our diameter bound on the $M_{i}$ we have that $Y$ is compact and so $\mu^{0,X}$ obtains a minimum somewhere on $Y-Y_{E}$.  By the maximum principle $\mu^{0,X}=constant$ and hence $|A^{C^{0}}|=0$.

The above allows us to conclude two points.  First $\mu^{0,X}$ is bounded and so $Y_{E}=\emptyset$, hence $X$ has at worst orbifold singularities.  Since $A^{C^{0}}$ vanishes we return to the bundle $U^{X}\times N^{0}\rightarrow U^{X}\times (N^{0}/C^{0})$ to interpret this.  Restrict the metric $g^{U\times N}$ to a single $N^{0}$ fiber.  Let $V^{0},V^{1}$ be horizontal invariant vectors on $N^{0}$ which are perpendicular to $C^{0}\leq N^{0}$ and hence lifts of vectors $\bar V^{0},\bar V^{1}$ on $N^{0}/C^{0}$.  Then $A^{C^{0}}=0$ implies that the projection of the bracket $[V^{0},V^{1}]$ to $C^{0}$ vanishes.  However if $\bar V^{0},\bar V^{1}\in cent(\eta/c^{0})$ as elements of the lie algebra then this implies that $[V^{0},V^{1}]=0$ identically.  In other words $C^{1}$ is also abelian (and in fact also contained in the center of $N^{0}$ as well).  So $R^{1}=0$ and we may repeat the above arguments with $C^{1}$ instead of $C^{0}$, giving us $\mu^{1,A}=constant$.  This process continues inductively until we see that $C^{k}=N$ is abelian with $A^{k}=A^{N}=0$ and $H^{k}=H^{N}=\nabla\mu^{ad,X}=0$.  In particular the submersion $U^{X}\times N^{0}\rightarrow U^{X}$ has $N^{0}$ abelian with vanishing mean curvature and integrability tensor.

The last step of the proof is to show the second fundamental form $T^{N}$ of the Riemannian submersion $U^{X}\times N^{0}\rightarrow U^{X}$ vanishes.  For starters Proposition \ref{prop_curv_comp}.3 now tells us that the orbifold Ricci curvature of $X$ is at least nonnegative because $H^{N}$ vanishes.  We already discussed that $X$ is now an orbifold and so by the orbifold splitting theorem we can pass to the orbifold universal cover $\tilde X\approx X^{c}\times \mathds{R}^{l}$ to get an isometric splitting with $X^{c}$ a compact simply connected orbifold.  Since $Y$ is an orbifold bundle over $X$ we may pass to appropriate covers $\tilde V^{ad}\rightarrow\tilde Y\rightarrow \tilde X$.  The vanishing of $A^{N}$ tells us that $\nabla^{ad}=\nabla^{flat}$ is a flat connection and since $\tilde X$ is orbifold simply connected and $\tilde V^{ad}$ is equivariant we can parallel translate a basis $\{\xi^{j}\}$ of $\tilde V^{ad}$.  Proposition \ref{prop_curv_comp}.3 and that $\tilde X$ is Ricci flat in the $\mathds{R}^{l}$ directions tells us that $T^{N}$ vanishes in the $\mathds{R}^{l}$ directions and in particular $h^{ad,X}(\xi^{j},\xi^{j})$ is at most a function of $X^{c}$ ($T^{N}=\nabla h^{ad,X}$ by Lemma \ref{lem_Veq1}.1).  In particular because $X^{c}$ is compact $h^{ad,X}(\xi^{j},\xi^{j})$ obtains a maximum at some point.  By Proposition \ref{prop_curv_comp}.1 $h^{ad,X}(\xi^{j},\xi^{j})$ satisfies the equation $\triangle_{Y}(h^{ad,X}(\xi^{j},\xi^{j})) -\langle\nabla(h^{ad,X}(\xi^{j},\xi^{j})), \nabla\mu^{ad}\rangle +2|T(\xi^{j},\cdot)|^{2}=0$ and thus a maximum principle thus gives us that $h^{ad,X}(\xi^{j},\xi^{j})$ is constant with $T(\xi^{j},\cdot)=0$.  This holds for each $j$ and so we are done.
\end{proof}

and now we prove Theorem \ref{cor_nt2_main1}:

\begin{proof}[Proof of Theorem \ref{cor_nt2_main1}]

The proof is by contradiction.  Assume for some $n$ and $K$ that no such $\epsilon$ exists.  Then we can find a sequence of complete Riemannian manifolds $(M^{n}_{i},g_{i})$ with $diam=1$, $|sec_{i}|\leq K$ and $|Rc_{i}|\rightarrow 0$ that do not satisfy the statement of the corollary.  But after passing to a subsequence we can apply Theorem \ref{thm_nt2_main2} to see that $(M_{i},g_{i})\rightarrow (X,d)$ where $X$ is a Ricci flat Riemannian orbifold.  Now standard theory tells us that for $i$ sufficiently large that the $M_{i}$ is a singular bundle over $X$ with infranil fibers, we need only check that it is an orbifold bundle.  For that consider the sequence $(FM_{i},g^{FM}_{i},O(n)) \stackrel{eGH}{\rightarrow}(Y,g^{Y},O(n))$ where for $i$ sufficiently large the $FM_{i}$ are equivariant fiber bundles over $Y$.  If we knew that the $O(n)$ action on $Y$ had no exceptional isotropy then the induced bundle $M_{i}\rightarrow X$ would be orbifold as claimed.  However this follows immediately, and in fact was explicitly stated, in the proof of Theorem \ref{thm_nt2_main2} above.

\end{proof}

\section{Directions for Future Work}

We give a brief account of some open questions and directions.

We saw that Theorem \ref{thm_nt2_main2} does not hold without the diameter assumption, our first question is:

\begin{problem}
Under what additional assumptions does Theorem \ref{thm_nt2_main2} hold if the diameter assumption is dropped?
\end{problem}

This could have useful consequences for understanding singularity dilations.

Also when it comes to Theorem \ref{thm_nt2_main2} it seems to the authors that the Ricci pinching condition may be replaced by other pinching conditions.  For instance if $W$ is the Weyl tensor then:

\begin{problem}
Does Theorem \ref{thm_nt2_main2} hold if $|Rc_{i}|\rightarrow 0$ and $X$ being a Ricci flat orbifold is replaced by $|W_{i}|\rightarrow 0$ and $X$ being conformally flat, respectively?
\end{problem}

The conclusion of Theorem \ref{thm_nt2_main2} gives us restrictions not just on the geometry of the limit $X$ but on the topology as well (namely $X$ can have at worst orbifold singularities).

\begin{problem}
Under what more general hypothesis can we restrict the singularity behavior of limits $X$ of manifolds with bounded curvature?
\end{problem}

For instance it follows from the proof of Theorem \ref{thm_nt2_main2} that if we had just assumed only the upper pinching bound $Rc_{i}\leq \epsilon_{i}\rightarrow 0$ then $X$ would still have at worst orbifold singularities.

\appendix

\section{Infranil Bundle Structure}\label{sec_inf_bund}

This section is mainly to review and classify some structure introduced from \cite{F2} and \cite{CFG} and prove some refinements.  We will also introduce an assortment of terminology which is used throughout the paper.  We use the notation that if $\mathcal{V}\rightarrow M$ is a vector bundle then $\Gamma(\mathcal{V})$ denotes the space of smooth sections.  Similarly $\Gamma(M)$ will on occasion be used to denote the smooth functions on $M$.

\begin{definition}
Let $M$ be a smooth manifold with $\mathcal{V}\rightarrow M$ a vector bundle over $M$.  We call $\nabla^{\mathcal{V}}:\Gamma(\mathcal{V})\times\Gamma(\mathcal{V})\rightarrow \Gamma(\mathcal{V})$ a $\mathcal{V}$-connection on $M$ if $\forall U,V,W\in \Gamma(\mathcal{V})$ and $\kappa\in\Gamma(M)$ we have
\begin{enumerate}
\item $\nabla^{\mathcal{V}}_{U+kV}(W) = \nabla^{\mathcal{V}}_{U}(W)+\kappa\nabla^{\mathcal{V}}_{V}(W) \in \Gamma(\mathcal{V})$

\item $\nabla^{\mathcal{V}}_{U}(\kappa V) = d_{U}\kappa V + \kappa\nabla^{\mathcal{V}}_{U}(V)$
\end{enumerate}
\end{definition}

Notice that if $\mathcal{V}\subseteq TM$ is an integrable subbundle then $\nabla^{\mathcal{V}}$ defines an affine connection on its restriction to each invariant submanifold.  In practice we will be interested when $\mathcal{V}$ arises as the vertical subspace induced from a submersion $f:M_{0}\rightarrow M_{1}$ and will write $\nabla^{f}$ in such cases.

In the case $\mathcal{V}$ is induced from a submersion $f:M_{0}\rightarrow M_{1}$ we have that the restriction of $\nabla^{f}$ to each level set of $f$ is an affine connection.  Given any metric $g_{0}$ on $M_{0}$ there is a canonical $\mathcal{V}$-connection induced by $g_{0}$ by projecting the Levi-Civita connection to the vertical distribution, we will refer to this connection as $\nabla^{f,LC}$.  If $\nabla^{f}$ and $\tilde\nabla^{f}$ are any two fiber connections then we see that $\nabla^{f}-\tilde\nabla^{f}$ is tensorial on $\mathcal{V}$.

\begin{definition}
We call a $\mathcal{V}$-connection $\nabla^{f}$ a group $\mathcal{V}$-connection if $\forall y\in M_{1}$ the restriction of $\nabla^{f}$ to $f^{-1}(y)$ is a flat affine connection with parallel torsion.  We call $\nabla^{f}$ an infranil connection if additionally the induced lie algebra is nilpotent, and we call $\nabla^{f}$ a nil connection if we even further assume that the induced holonomy on each $f^{-1}(y)$ is trivial.  We call the pair $(f,\nabla^{f})$ an infranil (resp. nil) bundle.  If $M_{0}$ and $M_{1}$ are Riemannian we say $(f,\nabla^{f})$ is $\{A\}^{k+2,\alpha}_{0}$-regular if $f$ is $\{A\}^{k+2,\alpha}_{0}$-regular and $\nabla^{f}-\nabla^{f,LC}$ is $\{A\}^{k,\alpha}_{0}$-regular.
\end{definition}

Notice that if $\nabla^{f}$ is a group $\mathcal{V}$-connection then it defines a group structure $G$ on the universal cover of each fiber such that the fundamental group $\Lambda$ lies naturally as a discrete subgroup of $G\rtimes Aut(G)$.  If $\nabla^{f}$ is an infranil connection then this group $G\equiv N$ is nilpotent, and if $\nabla^{f}$ is a nil connection then $\Lambda \leq N$ has no automorphism part.  We point out also that if $U$ is any manifold, $N$ a nilpotent lie group and $f:U\times N\rightarrow U$ is the projection map then there is a canonical nil $\mathcal{V}$-connection $\nabla^{N}$ on $U\times N$ which is the defined on each $N$ fiber as being the unique connection for which the left invariant vectors are parallel.  The following uses definitions from Section \ref{sec_notation}.

%Notice the integrability of the distribution $\mathcal{V}$ allows us to use $\nabla^{f}$ to define an exponential map $exp_{\mathcal{V}}:\mathcal{V}\rightarrow M_{0}$ in the usual way.  If $(M_{i},g_{i})$ and $(f,\nabla^{f})$ are $\{A\}^{\infty}_{0}$-regular and we let $V_{\epsilon}=\{V\in\mathcal{V}: ||V||\leq\epsilon\}$ then by writing in local weak coordinates it is easy to check that we can pick $\epsilon=\epsilon(n,A)$ small so that $exp_{\mathcal{V}}|_{\mathcal{V}_{\epsilon}}$ is $\{B\}=\{B(n,A)\}^{\infty}_{0}$-regular and even that $\frac{1}{2}||V||\leq d(exp_{\mathcal{V}}(0\cdot V),exp_{\mathcal{V}}(V))\leq 2||V||$ holds for each $V\in\mathcal{V}_{\epsilon}$.  If $\nabla^{f}$ is a group $\mathcal{V}$-connection then this exponential map is just the usual Lie Group exponential map on each fiber.

\begin{definition}
Let $M_{0}$ and $M_{1}$ be manifolds with $(f,\nabla^{f})$ an infranil bundle with nilpotent structure group $N$.  Let $\varphi:B_{r}(0)\times N \rightarrow M_{0}$ be a weak nilpotent coordinate system.  Then we say that $\varphi$ and $f$ are compatible if the fibers of the lifted map $\tilde f:B_{r}\times N\rightarrow M_{1}$ are the $N$ factors, and the pullback $\mathcal{V}$-connection $\varphi^{*}\nabla^{f}$ is equal to the canonical nil connection $\nabla^{N}$ on $B_{r}\times N$.  If $(M_{0},g_{0})$ and $(M_{1},g_{1})$ are Riemannian and $(N,h)$ is normalized then we say the compatible weak nilpotent map $\varphi$ is $\{A\}^{k+2,\alpha}_{0}$-regular if $(f,\nabla^{f})$ is $\{A\}^{k+2,\alpha}_{0}$-regular and as a Riemannian map $\varphi$ is $\{A\}^{k+2,\alpha}_{0}$-regular.
\end{definition}

As expected if $M_{0},M_{1}$ are $G$ manifolds then we call $(f,\nabla^{f})$ a $G$-infranil bundle if $f$ is $G$ equivariant and the induced action of $G$ on the level sets of $f$ are by affine transformations with respect to $\nabla^{f}$.

As in \cite{CFG} we get as a consequence of Malcev Rigidity the existence of compatible nilpotent coordinates with any infranil bundle with $C^{\infty}$-bounds.  Recall a submersion $f$ between Riemannian manifolds is an $\epsilon$-submersion if for every horizontal vector $X$ we have $1-\epsilon\leq |df[X]| \leq 1+\epsilon$.

\begin{theorem}
Let $(M^{n}_{0},g_{0},p_{0})$ and $(M_{1},g_{1},p_{1})$ be $\{A\}^{\infty}_{0}$-regular at $p_{i}$ with $(f,\nabla^{f})$ an $\{A\}^{\infty}_{0}$-regular $G$-infranil bundle such that $f(p_{0})=p_{1}$, $diam f^{-1}(y) \leq 1$ $\forall y$ and with $f$ a $\frac{1}{2}$-submersion.  Let $\varphi:B_{r}\rightarrow M_{1}$ be a $\{A\}^{\infty}_{0}$-regular coordinate system with $\varphi(0)=p_{1}$.  Then if $r\leq r(n,A)$ then there exists a simply connected normalized nilpotent $(N,h)$ and a $\{B\}^{\infty}_{0}$=$\{B(n,A)\}^{\infty}_{0}$-regular weak nilpotent coordinate system $\tilde\varphi:B_{r}\times N\rightarrow M_{0}$ which is compatible with $(f,\nabla^{f})$ such that $f(\varphi(x,n))=x$.
\end{theorem}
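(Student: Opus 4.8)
The plan is to adapt the construction of \cite{CFG}: trivialize the fibration over the contractible coordinate ball, use the affine structure on the fibers together with Malcev rigidity to identify all the fibers with a single nilpotent Lie group, and then read off $\tilde\varphi$ from the resulting product structure. First I would work over $U\equiv\varphi(B_r)\subseteq M_1$ and study $f^{-1}(U)\to U$. Shrinking $r$ to a definite scale $r(n,A)$ is legitimate: the $\tfrac12$-submersion hypothesis together with the curvature bound $|sec_{g_i}|\le A^0$ and $\mathrm{diam}\,f^{-1}(y)\le 1$ give a uniform scale on which the fibration is metrically close to a Riemannian product. On such a scale one may assume $f^{-1}(U)$ is a fiber bundle over the contractible ball $B_r$, hence topologically trivial, with infranil fiber $F$. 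Passing to the universal cover $\widetilde{f^{-1}(U)}\to B_r$ — whose fiber is the simply connected cover $\widetilde F$ and whose covering group $\Lambda\cong\pi_1(F)$ is a finite extension of a lattice in a nilpotent group — the pulled-back vertical connection restricts on each fiber to a flat, parallel-torsion, trivial-holonomy, nilpotent connection, i.e.\ a nil connection in the sense of the Appendix.

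The structural input is that a flat connection with parallel torsion and nilpotent induced Lie algebra on a simply connected manifold identifies it affinely with a simply connected nilpotent Lie group $N$ carrying its canonical connection (the one for which the left invariant vectors are parallel, with torsion $-[\cdot,\cdot]$). Thus each fiber of $\widetilde{f^{-1}(U)}$ acquires a nilpotent Lie group structure once an identity element is chosen. Here I would invoke Malcev rigidity: the abstract group $\Lambda$, which is the same for all fibers since $B_r$ is connected, determines $N$ up to a canonical isomorphism, and $\mathrm{Aut}(\Lambda)$ embeds canonically into $\mathrm{Aut}(N)$. This forces the a priori varying family $\{N_x\}_{x\in B_r}$ of nilpotent structures to be canonically constant, so the structure group of the family reduces to $N\rtimes\mathrm{Aut}(N)$; over the contractible base $B_r$ the reduced bundle trivializes, yielding a diffeomorphism $\Psi:B_r\times N\to\widetilde{f^{-1}(U)}$ carrying $\{x\}\times N$ onto the fiber over $x$ and, slicewise, carrying the canonical connection $\nabla^N$ to the pulled-back $\nabla^f$, with $\Lambda$ acting on $B_r\times N$ through $N\rtimes\mathrm{Aut}(N)$ on the second factor. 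Composing $\Psi$ with the covering map $\widetilde{f^{-1}(U)}\to f^{-1}(U)\subseteq M_0$ and with $\varphi$ on the base produces $\tilde\varphi:B_r\times N\to M_0$; it is a weak nilpotent coordinate system with covering group $\Lambda$, it is compatible with $(f,\nabla^f)$ by construction, and $f\circ\tilde\varphi=\varphi\circ\mathrm{pr}_{B_r}$.

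It remains to normalize $(N,h)$ and to record the $\{B(n,A)\}^\infty_0$ bounds. I would take $h$ to be a left invariant metric on $N$ with $\|ad\|_h\le 1$; a rescaling achieves this, and since $\mathrm{diam}\,F\le 1$ and $|sec_{g_0}|\le A^0$ pin down the bracket of the Lie algebra up to $(n,A)$-controlled constants, both the rescaling and $h$ itself are controlled by $n$ and $A$, and $h$ is comparable with $(n,A)$-constants to the metric the fiber inherits from $g_0$. The higher-order bounds $\tilde\varphi^{*}g_0\sim\delta_{ij}+h_{ij}$ in the $\{B\}^\infty_0$ sense follow by differentiating through the construction: the fiberwise affine development of $\nabla^f$ is controlled by $\nabla^f-\nabla^{f,LC}$ and the $g_0$-geometry (both $\{A\}^\infty_0$-regular), the horizontal trivialization over $B_r$ is controlled by the $\{A\}^\infty_0$-regularity of $\varphi$ and the $\tfrac12$-submersion bound, and every constant is uniform on the fixed scale $r(n,A)$. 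Finally, since the $G$-action is by affine maps intertwining the fibers of $f$, the uniqueness clause in Malcev rigidity (after a center-of-mass averaging over the finite part of $\Lambda$ if needed) lets one arrange $\Psi$, and hence $\tilde\varphi$, to be $G$-equivariant with $G$ acting on $B_r\times N$ through $N\rtimes\mathrm{Aut}(N)$.

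The main obstacle is the straightening step: proving that the family of flat affine nilpotent structures over $B_r$ is constant and extracting from it a single product coordinate $B_r\times N$ with $\nabla^N$ on the slices, while keeping all constants dependent only on $n$ and $A$ — in particular uniform on a scale $r(n,A)$ that does not see the collapse. This is precisely where Malcev rigidity and the uniform geometry hypotheses (diameter bound, $\tfrac12$-submersion, $\{A\}^\infty_0$-regularity) both enter essentially, and it is the content of the \cite{CFG} constructions that we are adapting here, now with the bound dependence made explicit.
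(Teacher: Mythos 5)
Your proposal follows essentially the same route as the paper's sketch: identify the fibers affinely via $\nabla^f$ (development to a simply connected nilpotent group), invoke Malcev rigidity to make this identification canonical and hence constant over the base, trivialize geometrically over the contractible coordinate ball, and read off the $\{B(n,A)\}^\infty_0$ bounds from the $\{A\}^\infty_0$-regularity of $\varphi$, $g_i$, and $\nabla^f-\nabla^{f,LC}$. The paper phrases the trivialization step concretely as a normal exponential map $\bar\varphi:B_r\times(N/\Lambda)\to f^{-1}(\varphi(B_r))$ followed by a Malcev-rigidity correction, whereas you phrase it as a structure-group reduction over a contractible base; these are the same mechanism, and your account of why a priori $C^\infty$ regularity is being used matches the paper's own remark on that point.
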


Though we will not reprove the above carefully, for convenience we mention the basic points of the proof.  The connection $\nabla^{f}$ first allows us to identify a fiber $f^{-1}(\varphi(0))$ with $N/\Lambda$ naturally (though not uniquely).  Then we can use the normal exponential map to give us a uniformly bounded diffeomorphism $\bar\varphi:B_{r}\times (N/\Lambda)\rightarrow f^{-1}(\varphi(B_{r}))$ for $r$ sufficiently small. The $\mathcal{V}$-connection $\nabla^{f}$ may not be trivial in these coordinates however, so we perturb again using the Malcev Rigidity which guarantees that the induced mapping $f^{-1}(\varphi(x))\rightarrow f^{-1}(\varphi(0))$ from $\bar\varphi$ induces a canonical affine transformation $f^{-1}(\varphi(x))\rightarrow f^{-1}(\varphi(0))$ $\forall x$.  Notice that without this rigidity, which is a special property of the nilpotency of $N$, that such coordinates simply need not exist.  The need for the \textit{apriori} $C^{\infty}$ bound assumption comes from that although we control regularity at each step, some steps (like the use of exponential coordinates) use higher degrees of regularity than are strictly necessary from a previous step to control lower degrees of regularity in the next step.  One may try to fix this by some form of normal harmonic coordinates or some such methods, but we find that the smoothing lemma \ref{lem_smoothing} fixes this problem in a simpler manner.

With good local coordinates guaranteed above we wish to write down good global conditions.  We will actually do this in two steps.  The first is as follows.

\begin{definition}
Let $M_{0}$ and $M_{1}$ be smooth manifolds.  We say $\{(U_{\alpha}\times N,\Lambda,\varphi_{\alpha})\}$ is an unreduced infranil atlas if $\{U_{\alpha}\}$ is a covering of $M_{1}$ with $\varphi_{\alpha}:U_{\alpha}\times N\rightarrow M_{0}$ weak nilpotent coordinate systems such that $\pi_{\varphi_{\alpha}} = \Lambda$ is independent of $\alpha$ and such that the induced maps $\varphi_{\alpha}:U_{\alpha}\times (N/\Lambda)\rightarrow M_{0}$ give $M_{0}$ a bundle structure over $M_{1}$ whose transition functions are affine transformations.  We say the atlas is $\{A\}^{k+2,\alpha}_{0}$-regular if there exists normalized metrics $h^{coord}_{\alpha}$ such that the local diffeomorphisms $\phi_{\alpha}$ become $\{A\}^{k+2,\alpha}_{0}$-regular.
\end{definition}

In particular an (unreduced) infranil atlas naturally defines an infranil bundle structure and we say an infranil atlas is compatible with a given infranil bundle $(f,\nabla^{f})$ if the induced bundle structure is equal to $(f,\nabla^{f})$.  If $M_{0}$ and $M_{1}$ are $G$-manifolds then we say the infranil atlas is $G$-equivariant if the induced submersion map $f$ is $G$-equivariant and the induced mapping on the fiber connection $\nabla^{f}$ is an affine isometry.  As expected we call the $G$ action $\{A\}^{\infty}_{0}$-regular if it is so bounded in the charts belonging to the infranil atlas.  So using the last theorem we can immediately get

\begin{theorem}\label{thm_infra_atlas}
Let $(M^{n}_{0},g_{0},p_{0})$ and $(M_{1},g_{1},p_{1})$ be $\{A\}^{\infty}_{0}$-regular at $p_{i}$ with $inj(M_{1})\geq\iota>0$ and $(f,\nabla^{f})$ an $\{A\}^{\infty}_{0}$-regular $G$-infranil bundle such that $f(p_{0})=p_{1}$, $diam f^{-1}(y) \leq 1$ $\forall y$ and with $f$ a $\frac{1}{2}$-submersion.  If $r\leq r(n,A,\iota)$ then there exists $\{B\}^{\infty}_{0}$=$\{B(n,A,r)\}^{\infty}_{0}$ such that for any cover $\{U_{\alpha}\}=\{B_{r}(x_{\alpha})\}$ of $M_{1}$ we can construct a $\{B\}^{\infty}_{0}$-regular unreduced infranil atlas $\{(U_{\alpha}\times N,\Lambda,\varphi_{\alpha})\}$ which is compatible with $(f,\nabla^{f})$.
\end{theorem}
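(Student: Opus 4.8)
The plan is to apply the preceding theorem (the local existence of compatible weak nilpotent coordinates) at each point of the cover and then verify that the resulting local charts assemble into an unreduced infranil atlas. First I would fix $r \leq r(n,A,\iota)$ small enough that the preceding theorem applies on every ball of radius $r$ in $M_1$; this is precisely where the dependence on $\iota$ enters, since one needs a $\{A\}^{\infty}_{0}$-regular coordinate system $\varphi^{M_1}_\alpha : B_r(0) \to M_1$ centered at an arbitrary point $x_\alpha$, which requires a lower injectivity radius bound. Given the cover $\{U_\alpha\} = \{B_r(x_\alpha)\}$, for each $\alpha$ I would pick such a chart $\varphi^{M_1}_\alpha$ and invoke the preceding theorem to produce a simply connected normalized nilpotent $(N_\alpha, h_\alpha)$, a discrete $\Lambda_\alpha \leq N_\alpha \rtimes Aut(N_\alpha)$, and a $\{B\}^{\infty}_{0}$-regular weak nilpotent coordinate system $\varphi_\alpha : U_\alpha \times N_\alpha \to M_0$ compatible with $(f, \nabla^f)$ and satisfying $f \circ \varphi_\alpha(x,n) = x$. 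The constants $\{B\}^{\infty}_{0} = \{B(n,A,r)\}^{\infty}_{0}$ are independent of $\alpha$ because all the hypotheses of the preceding theorem ($\{A\}^{\infty}_{0}$-regularity, $diam\, f^{-1}(y) \leq 1$, $f$ a $\frac{1}{2}$-submersion) hold uniformly across $M_1$.

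Next I would arrange that a single pair $(N,\Lambda)$ serves for all $\alpha$, which is the Malcev Rigidity step. Assuming $M_1$ connected (otherwise work on each component), on a nonempty overlap $U_\alpha \cap U_\beta$ choose $y$ in it; the two charts identify the fiber $f^{-1}(y)$ with $N_\alpha/\Lambda_\alpha$ and with $N_\beta/\Lambda_\beta$, and both identifications are compatible with the single affine structure on $f^{-1}(y)$ determined by $\nabla^f$ (since each $\varphi_\alpha$ pulls $\nabla^f$ back to the canonical nil connection $\nabla^N$). Hence the restriction of $\varphi_\beta^{-1}\circ\varphi_\alpha$ to $f^{-1}(y)$ is an affine diffeomorphism of infranilmanifolds, and by Malcev Rigidity it is induced by a Lie group isomorphism $N_\alpha \to N_\beta$ carrying $\Lambda_\alpha$ onto $\Lambda_\beta$. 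These isomorphisms are canonical, so they satisfy the cocycle condition on triple overlaps and can be used to transport all data to a fixed model $(N,\Lambda)$; conjugating the $h_\alpha$ by them keeps the metrics normalized up to the bounded distortion already present in $\{B\}^{\infty}_{0}$, at worst forcing a controlled enlargement of the output constants. With $N$ and $\Lambda$ fixed, $\pi_{\varphi_\alpha} = \Lambda$ is automatically independent of $\alpha$.

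It then remains to check the two structural conditions in the definition of an unreduced infranil atlas. The transition maps $\varphi_{\alpha\beta} = \varphi_\beta^{-1}\circ\varphi_\alpha$ on $(U_\alpha\cap U_\beta)\times(N/\Lambda)$ are fiberwise affine because each $\varphi_\alpha^{*}\nabla^f = \nabla^N$, so $\varphi_{\alpha\beta}$ intertwines $\nabla^N$ with itself; hence the induced maps $\varphi_\alpha : U_\alpha\times(N/\Lambda)\to M_0$ give $M_0$ a bundle structure over $M_1$ with affine transition functions, and this bundle structure is by construction compatible with $(f,\nabla^f)$. For the $G$-equivariant version one uses the preceding theorem in its stated $G$-infranil form; moreover the atlas being $G$-equivariant only constrains the induced submersion and fiber connection, which here are the given $(f,\nabla^f)$, so $G$-equivariance of the atlas is inherited directly from the $G$-infranil bundle hypothesis (with the auxiliary charts $\varphi^{M_1}_\alpha$ taken so that $f$ stays $G$-equivariant).

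The main obstacle I expect is the Malcev Rigidity step: the bare existence of local models is essentially quoted, but one must verify that the identifying isomorphisms between overlapping models are canonical enough to be coherent globally, and that conjugation by them does not destroy the uniform $\{A\}^{\infty}_{0}$-type regularity bounds --- this is exactly why the output constant is a new $\{B\}^{\infty}_{0} = \{B(n,A,r)\}^{\infty}_{0}$ depending on $r$ rather than the input $\{A\}^{\infty}_{0}$. Keeping track of this controlled loss of regularity, and of how the choice of base point $p_0$ (fixing a specific fiber as the reference $N/\Lambda$) propagates consistently across all charts, is where the care is needed.
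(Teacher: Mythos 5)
Your proof is correct and follows the same approach the paper intends: the paper does not write out a proof of this theorem, stating only that it follows ``immediately'' from the preceding local existence theorem, and your argument is the expected expansion of that — apply the local theorem on each ball of the cover, use Malcev rigidity to fix a single $(N,\Lambda)$, and observe that compatibility with $\nabla^f$ forces affine transitions. Two small simplifications worth noting: the pair $(N,\Lambda)$ is already globally determined by the infranil bundle $(f,\nabla^f)$ (as remarked just before the definitions in the appendix, $\nabla^f$ puts the group structure on the universal cover of each fiber), so the reconciliation across charts is canonical rather than requiring a fresh cocycle check; and the definition of a regular unreduced infranil atlas permits $\alpha$-dependent normalized metrics $h^{coord}_\alpha$, so no conjugation to a single normalized $h$ is needed.
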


Our reason for calling the infranil atlas above unreduced is the following.  If $N$ and $\Lambda$ are as before then the Lie Group of affine transformations on $N/\Lambda$ is $\frac{N}{Cent(N)\cap\Lambda}\rtimes Aut(\Lambda)$.  The above theorem is the statement that the fiber bundle $f:M_{0}\rightarrow M_{1}$ has its structure group reduced to $Aff(N/\Lambda)$.  However if we consider the subgroup $C_{Aff}\equiv \frac{Cent(N)}{Cent(N)\cap\Lambda}\rtimes Aut(\Lambda)$ then we see that $Aff(N/\Lambda)/C_{Aff}$ is contractible.  Hence in principal we should be able to reduce the structure group of $f:M_{0}\rightarrow M_{1}$ to $C_{Aff}$ and the only thing holding us back is checking that we can do this while keeping good regularity of our coordinates.  In fact this is not so hard and so we call an unreduced infranil atlas $\{(U_{\alpha}\times N,\Lambda,\varphi_{\alpha})\}$ simply an infranil atlas if the coordinate transformations lie in $C_{Aff}$.  In general we call actions by $C_{Aff}$ on $N/\Lambda$ central affine transformations.  We then get the following:

\begin{theorem}\label{thm_infra_atlas2}
Let $(M^{n}_{0},g_{0},p_{0})$ and $(M_{1},g_{1},p_{1})$ be as in Theorem \ref{thm_infra_atlas}.  Then for any $r\leq r(n,A,\iota)$ there there exists $\{B\}^{\infty}_{0}$=$\{B(n,A,r)\}^{\infty}_{0}$ and a cover $\{U_{\alpha}\}=\{B_{r}(x_{\alpha})\}$ of $M_{1}$ such that we can construct a $\{B\}^{\infty}_{0}$-regular infranil atlas $\{(U_{\alpha}\times N,\Lambda,\varphi_{\alpha})\}$ which is compatible with $(f,\nabla^{f})$.  Further if the $G$ action is $\{A\}^{\infty}_{0}$-regular then we can take the group $G$ to act by central affine automorphisms in the infranil atlas charts.
\end{theorem}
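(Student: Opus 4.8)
The plan is to take the unreduced atlas $\{(U_{\alpha}\times N,\Lambda,\varphi_{\alpha})\}$ supplied by Theorem \ref{thm_infra_atlas} and rebuild each chart by a fiberwise left translation in the $N$-factor so as to push the transition functions into $C_{Aff}$. Writing a transition function as an affine map of the nilfiber, $\varphi_{\alpha\beta}(x)=(g_{\alpha\beta}(x),A_{\alpha\beta}(x))\in \frac{N}{Cent(N)\cap\Lambda}\rtimes Aut(\Lambda)$, I want to produce $\{B\}^{\infty}_{0}$-regular maps $s_{\alpha}:U_{\alpha}\to N$ (well defined modulo $Cent(N)\cap\Lambda\le\Lambda$, hence acting on $N/\Lambda$) so that the new charts $\varphi'_{\alpha}(x,n)\equiv\varphi_{\alpha}(x,s_{\alpha}(x)\cdot n)$ have transition functions whose translation part lies in $Cent(N)/(Cent(N)\cap\Lambda)$. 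The obstruction is exactly the image of the transition data under the quotient homomorphism $Aff(N/\Lambda)\twoheadrightarrow \bar{Aff}\equiv H\rtimes Aut(\Lambda)$, where $H\equiv N/Cent(N)$: we need maps $\bar s_{\alpha}:U_{\alpha}\to H$ trivializing the $H$-valued part, i.e. with $\bar s_{\beta}^{-1}\,\bar g_{\alpha\beta}\,A_{\alpha\beta}(\bar s_{\alpha})=e$ on overlaps. Since $N$ is simply connected nilpotent, $H$ is a simply connected nilpotent Lie group, diffeomorphic to a Euclidean space, so the associated $H$-bundle over $M_{1}$ is trivial and such $\bar s_{\alpha}$ exist; the content of the theorem is that they can be chosen with uniform bounds.

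To get the regularity I would construct the $\bar s_{\alpha}$ by induction up the lower central series $H=H^{(1)}\supseteq H^{(2)}\supseteq\cdots\supseteq H^{(c+1)}=\{e\}$, each quotient $H^{(j)}/H^{(j+1)}$ being a vector group. The relation $A_{\alpha\gamma}=A_{\alpha\beta}A_{\beta\gamma}$ makes $\{A_{\alpha\beta}\}$ the transition data of a flat vector bundle $\bar E\to M_{1}$ with fiber $H/H^{(2)}$, on whose sections $Aut(\Lambda)$ acts linearly, and the relation $h_{\alpha\gamma}=h_{\alpha\beta}+A_{\alpha\beta}(h_{\beta\gamma})$ for the projected translation parts says precisely that $\{h_{\alpha\beta}\}$ is a $1$-cocycle for the fine sheaf of smooth sections of $\bar E$. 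A partition of unity $\{\chi_{\alpha}\}$ subordinate to the cover — which, using $inj(M_{1})\ge\iota$ and the bounded geometry, we are free to pick uniformly locally finite with $\{B\}^{\infty}_{0}$-bounded bump functions — then produces a trivializing cochain, e.g. $\hat s_{\alpha}=\sum_{\gamma}\chi_{\gamma}\big(\text{the $\bar E$-transported }h_{\gamma\alpha}\big)$, with $C^{\infty}$-norm controlled by $n,A,r$. Lifting $\hat s_{\alpha}$ into $H^{(1)}/H^{(2)}\subseteq H$ and modifying the charts pushes the remaining translation data into $H^{(2)}$; repeating with $H^{(2)}/H^{(3)}$, and so on, after $c$ steps one assembles the desired $\bar s_{\alpha}$, and a smooth section of $N\to H$ turns these into the sought $s_{\alpha}$. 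Because fiberwise left translations of $N$ preserve the canonical nil connection $\nabla^{N}$ and commute with the right $\Lambda$-action, each $\varphi'_{\alpha}$ is again a $\{B\}^{\infty}_{0}$-regular weak nilpotent coordinate system with covering group $\Lambda$ and $(\varphi'_{\alpha})^{*}\nabla^{f}=\nabla^{N}$, so the rebuilt atlas is still compatible with $(f,\nabla^{f})$ and now has transition functions in $C_{Aff}$, i.e. it is an infranil atlas.

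For the equivariant refinement I would run the same induction $G$-equivariantly. If the $G$-action is $\{A\}^{\infty}_{0}$-regular then, after the usual averaging as in \cite{CFG} and Theorem \ref{thm_smooth_GH} so that $G$ acts by bundle maps, each $\bar E\to M_{1}$ inherits a compatible $G$-action covering the $G$-action on $M_{1}$; at every stage I replace the partition-of-unity cochain by its $G$-average, a genuine operation in the vector bundle, obtaining $G$-equivariant $\bar s_{\alpha}$ with the same bounds. The resulting $\varphi'_{\alpha}$ then intertwine the $G$-action with an action on $N/\Lambda$ whose automorphism part lies in $Aut(\Lambda)$ (as it already did) and whose translation part has been forced into $Cent(N)/(Cent(N)\cap\Lambda)$; that is, $G$ acts by central affine automorphisms in the new charts.

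The main obstacle, and the reason this is not purely formal, is the inductive bookkeeping: one must check that at each level of the central series the projected transition data genuinely behaves like a (flat, $G$-equivariant) affine bundle modeled on a vector bundle — in particular that the $Aut(\Lambda)$-part acts linearly on the successive subquotients and that each partition-of-unity correction, once lifted back to $H$, does not reintroduce translation terms spoiling the reductions already achieved — and that the $C^{k+2,\alpha}$ bounds (indeed $C^{\infty}$, via the smoothing of Lemma \ref{lem_smoothing}) propagate cleanly through all $c$ steps with constants depending only on $n,A,r$. The geometric ingredients — a uniformly locally finite cover with controlled bump functions, and $G$-averaging in vector bundles — are standard once the bounded geometry of the preceding sections is in hand, so the real work is organizing this nilpotent/affine linear algebra correctly.
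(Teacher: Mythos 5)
Your argument is correct in substance and reaches the same reduction (a uniformly bounded section of $P_{Aff}/C_{Aff}\to M_{1}$, equivalently a choice of $C_{Aff}$-orbit at each point), but you take a different route to build it than the paper. The paper observes that $N'=Aff(N/\Lambda)/C_{Aff}$ is a simply connected nilpotent Lie group carrying a canonical \emph{affine} structure in which the transition maps act affinely, and then applies the center-of-mass technique of \cite{BK} in one shot to the $N'$-valued cochain $s_{\alpha\beta}$ with partition-of-unity weights $\phi_{\beta}$; a single averaging produces the section $s$, with regularity inherited from the regularity of $s_{\alpha\beta}$ and $\phi_{\beta}$, and the $G$-equivariant refinement is a second center-of-mass averaging. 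You instead filter by the lower central series of $H=N/Cent(N)$ and solve a vector-bundle \v{C}ech $1$-cocycle at each graded step by partition of unity, lifting and iterating $c$ times; this is essentially an unpacking of the single affine center of mass into its graded pieces, and it works, but it requires exactly the inductive bookkeeping you flag at the end (checking that each lifted correction does not reintroduce lower-level terms and that the constants stay controlled through all $c$ steps). One small notational slip worth fixing: $H^{(1)}/H^{(2)}$ is a quotient of $H$, not a subgroup, so you should either phrase the induction as successively producing sections of $P'/H^{(j)}\to M_{1}$ for increasing $j$ and lifting via a (global, since $H$ is contractible) set-theoretic splitting $H/H^{(j+1)}\to H/H^{(j)}$, or choose once and for all a vector-space complement to $H^{(2)}$ in the Lie algebra and exponentiate it, rather than writing $H^{(1)}/H^{(2)}\subseteq H$. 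With that repaired, your proof and the paper's buy the same conclusion; the paper's packaging avoids the multi-step bookkeeping, while yours makes the underlying linear-algebra cohomology visible.
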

\begin{proof}
Let $\{B_{r}(x_{\alpha})\}$ be a covering of $M_{1}$ with $\phi_{\alpha}:M_{1}\rightarrow \mathds{R}$ a $\{C\}^{\infty}_{0}=\{C(n,A,r)\}^{\infty}_{0}$-regular partition of unity.  Let $\{(U_{\alpha}\times N,\Lambda,\varphi'_{\alpha})\}$ be the associated unreduced infranil atlas from Theorem \ref{thm_infra_atlas}.  Associated to this atlas is the $Aff(N/\Lambda)$ principal bundle $P_{Aff}\rightarrow M_{1}$ with local coordinates $U_{\alpha}\times Aff(N/\Lambda)$ whose transition functions are induced from the affine transformations $\varphi'_{\alpha}$.  With $C_{Aff}\leq Aff(N/\Lambda)$ as before we have the fiber bundle $P'\equiv P_{Aff}/C_{Aff}\rightarrow M_{1}$ with fibers $Aff(N/\Lambda)/C_{Aff}\approx N'$ a simply connected nilpotent.  We need to find a global section $s:M_{1}\rightarrow P'$ of this fiber bundle such that in the induced local coordinates $U_{\alpha}\times N'$ the section $s:U_{\alpha}\rightarrow N'$ is $\{C\}_{0}^{\infty}=\{C(n,A,r)\}_{0}^{\infty}$-regular.  Such a global section of course identifies a unique $C_{Aff}$ orbit in $Aff(N/\Lambda)$ for each point of $M_{1}$ and gives rise to our desired reduction.

To find this section we proceed as one might expect.  Note again that $Aff(N/\Lambda)/C_{Aff}\approx N'$ is a simply connected, hence contractible, nilpotent lie group which we equip locally with the quotient metric induced from $h^{coord}_{\alpha}$ on $N$.  Begin by letting $s_{\alpha}:U_{\alpha}\rightarrow Aff(N/\Lambda)/C_{Aff}\approx N'$ be the identity map for each $\alpha$.  If $U_{\alpha}\cap U_{\beta}\neq \emptyset$ then we can let $s_{\alpha\beta}\equiv s_{\alpha}\circ\varphi'_{\alpha}\circ (\varphi'_{\beta})^{-1}:U_{\alpha}\cap U_{\beta}\rightarrow N'$.  Because of our bounds on the transition functions $\varphi'$ each $s_{\alpha\beta}$ is clearly $\{C\}^{\infty}_{0}=\{C(n,A,r)\}^{\infty}_{0}$-regular.  To construct our global section $s$ we need to appropriately average the maps $s_{\alpha\beta}$.  Applying the center of mass technique of \cite{BK} to the functions $s_{\alpha\beta}:U_{\alpha}\rightarrow N'$ with weights $\phi_{\beta}$ gives us such a canonical averaging.  The coordinate transformations $\varphi'$ are affine transformations and the averaging procedure depends only on the affine structure of $N'$, so this procedure gives a well defined section $s:M_{1}\rightarrow P'$.  Because the functions $s_{\alpha\beta}$ and $\phi_{\beta}$ are $\{C\}^{\infty}_{0}$-regular so is the map $s$ and we are done.

If we further assume that the $G$ action is $\{A\}^{\infty}_{0}$-regular then by another center of mass argument we can take $s:M_{0}\rightarrow P'$ to be $G$-equivariant with respect to naturally induced $G$ action on $P'$.  The regularity of the original $s$ map and the $G$ action guarantees the regularity of the averaged map and we are again done.
\end{proof}

A final definition which will be of use is to construction Riemannian infranil bundles where the geometric structures and the bundle structures are related:

\begin{definition}\label{def_sym_bund}
Let $(M_{0},g_{0})$ and $(M_{1},g_{1})$ be Riemannian manifolds and $\{(U_{\alpha}\times N,\Lambda, \varphi_{\alpha}\}$ an infranil atlas.  Then we say $(M_{0},g_{0})$ is compatible with the infranil bundle if the right $N$ actions on the Riemannian manifolds $(U_{\alpha}\times N,\varphi_{\alpha}^{*}g_{0})$ are isometric actions.
\end{definition}

\end{document}